\renewcommand*{\backref}[1]{}
\renewcommand*{\backrefalt}[4]{%
    \ifcase #1 (Not cited.)%
    \or        (Cited on page~#2.)%
    \else      (Cited on pages~#2.)%
    \fi}
\setlist[description]{leftmargin=\parindent,labelindent=\parindent,itemsep=1pt,parsep=0pt,topsep=0pt}
\newtheorem{theorem}{Theorem}[section]
\newtheorem{remark}[theorem]{Remark}
\newtheorem{lemma}[theorem]{Lemma}
\numberwithin{equation}{section}
\def\d{\mathrm{\,d}}
\def\eps{\varepsilon}
\def\tilde{\widetilde}
\def\r{\mathbb{R}}
\let\div\undefined
\DeclareMathOperator{\div}{div}
\def\({\left(}
\def\){\right)}
\def\r{\mathbb{R}}
\def\r^n{\mathbb{R}^N}
\def\d{\mathrm{\,d}}
\def\tilde{\widetilde}
\def\d{\delta}
\def\p{\partial}
\def\a{\alpha}
\def\b{\beta}
\renewcommand{\leq}{\leqslant}
\renewcommand{\le}{\leqslant}
\renewcommand{\geq}{\geqslant}
\renewcommand{\ge}{\geqslant}
\author{Serena Dipierro}
\author{Jo{\~a}o Gon\c{c}alves da Silva}
\author{Giorgio Poggesi}
\author{Enrico Valdinoci}
\affil{ {\footnotesize Department of Mathematics and Statistics,} 
{\footnotesize The University of Western Australia,} \\
{\footnotesize 35 Stirling Highway,
Perth, WA 6009, Australia}\\
{\footnotesize\tt serena.dipierro@uwa.edu.au},
{\footnotesize\tt joao.goncalvesdasilva@research.uwa.edu.au},\\
{\footnotesize\tt giorgio.poggesi@uwa.edu.au},
{\footnotesize\tt enrico.valdinoci@uwa.edu.au}}
\title{A quantitative Gidas-Ni-Nirenberg-type result for the\\p-Laplacian via integral identities}
\date{}
\begin{document}
\maketitle

\begin{abstract}
We prove a quantitative version of a Gidas-Ni-Nirenberg-type symmetry result involving the $p$-Laplacian. 

Quantitative stability is achieved here via integral identities based on the proof of rigidity established by J. Serra in 2013, which extended to general dimension and the $p$-Laplacian operator an argument proposed by P. L. Lions in dimension~$2$ for the classical Laplacian. 

Stability results for the classical Gidas-Ni-Nirenberg symmetry theorem (involving the classical Laplacian) via the method of moving planes were established by Rosset in 1994 and by Ciraolo, Cozzi, Perugini, Pollastro in 2024. 

To the authors' knowledge, the present paper provides the first quantitative Gidas-Ni-Nirenberg-type result
involving the $p$-Laplacian for~$p \neq 2$.
Even for the classical Laplacian (i.e., for~$p=2$), this is the first time that integral identities are used to achieve stability for a Gidas-Ni-Nirenberg-type result.

In passing, we obtain a quantitative estimate for the measure of the singular set and an explicit uniform gradient bound.
\end{abstract}

\section{Introduction}
In~\cite{MR0544879}, B. Gidas, W. M. Ni and L. Nirenberg showed that positive classical 
solutions of the problem 
\begin{equation}\label{Original Problem}
\begin{cases}
-\Delta u = f(u) &  \text{ on }B,\\
 u = 0 & \text{ on }\partial B,
\end{cases}
\end{equation}
where~$B\subset \mathbb{R}^N$ (with~$N\geq 2$) is a ball, are radially symmetric and decreasing with respect to~$r = |x|$, provided that~$f$ can be written as the sum of a Lipschitz function and a nonincreasing function.  

The proof proposed in~\cite{MR0544879}
relied on the method of moving planes first introduced by Aleksandrov in~\cite{MR0102114}.
After the publication of~\cite{MR0544879}, several authors went on to generalize the result of~\cite{MR0544879} in several directions, see for instance,~\cite{MR1628044, MR1648566, KesavanPacella, serra2013radial, MR0653200, DPV_CVPDE, MR1159383, MR1190345, MR2096703}. 

In particular, P. L. Lions in~\cite{MR0653200} proved symmetry and monotonicity of positive solutions of~\eqref{Original Problem} in dimension~$2$ with a nonlinearity~$f$
which was assumed to be only locally bounded and strictly positive.
P. L. Lions' approach 
used a combination of the isoperimetric inequality and the Pohozaev identity and
was extended by Kesavan and Pacella in~\cite{KesavanPacella} to the $N$-Laplacian in~$\mathbb{R}^N$ for~$N\geq 2$.

Later on, J. Serra in~\cite{serra2013radial} extended such an approach to the $p-$Laplacian with~$p\in(1,+\infty)$ in all dimensions~$N\geq 2$. More, specifically, J. Serra showed radial symmetry and monotonicity of~$C^1(B)\cap C\left(\overline{B}\right)$ positive solutions of the problem
\begin{equation}\label{Serra's Problem}
   \begin{cases}
        -\Delta_p u:= -\div(|\nabla u|^{p-2}\nabla u) = f(u)& \text{ on }  B,\\
        u = 0 &\text{ on }\partial B,
    \end{cases}
\end{equation}
where~$B$ is a ball and~$f \in L^{\infty}_{\text{loc}}([0,+\infty))$ is nonnegative; for~$p\in(1,N)$ such a result also requires the assumption that there exists a nonincreasing function~$\phi:[0,+\infty)\rightarrow [0,+\infty)$ such that~$\phi\leq f\leq \frac{pN}{N-p}\phi$.  

Quantitative versions of the classical Gidas-Ni-Nirenberg result (involving the classical Laplacian) were obtained by Rosset in~\cite{MR1300801} and Ciraolo, Cozzi, Perugini, Pollastro in~\cite{MR4779387}; both these papers are based on the method of moving planes.\medskip

In this paper, we establish a quantitative version of J. Serra's result.
To the authors' knowledge, the present paper provides the first quantitative Gidas-Ni-Nirenberg-type result
involving the $p$-Laplacian for~$p \neq 2$.
We stress that, even for the classical Laplacian (i.e., for~$p=2$), this is the first time that a method relying on integral identities is used to achieve stability for a Gidas-Ni-Nirenberg-type result. In this regard, we mention that our quantitative analysis will show some similarities with the method used by Cianchi, Esposito, Fusco, Trombetti in~\cite{CianchiEspositoFuscoTrombetti+2008+153+189} to study a quantitative version of the Pólya-Szeg\"o principle.

We recall that the method used in~\cite{MR0653200, KesavanPacella, serra2013radial} was further extended to the anisotropic (and possibly weighted) setting in~\cite{DPV_CVPDE}: in this sense, we believe that our approach has the potential to be extended to such a setting (but, to present
the innovative aspects of our arguments in the clearest possible way, we focus here
on the isotropic setting, which is interesting in itself, and
we postpone the analysis of the anisotropic setting to forthcoming research).

We will work here mainly in the setting used in~\cite{serra2013radial}, but for the sake of simplicity, in our main result we will impose some suitable additional assumptions on the nonlinearity~$f$, which are, essentially, Lipschitz continuity and a positive lower bound~$\phi_0$ for~$f$. Nevertheless, we stress that our approach may be adapted to work with different choices of settings and parameters\footnote{More on this will be soon addressed in research to be presented in the near future. In particular, 
while writing this paper we learned that, independently of us, Ciraolo and Li have proved an alternative result which will appear in a forthcoming paper~\cite{CiraoloLi}.}, but for the sake of simplicity we preferred to focus on the setting described below. 
\medskip

The mathematical details of the framework considered are as follows.
We deal with~$p\in(1,+\infty)$ and the
$p$-Laplace equation
\begin{equation}\label{Problem}
   \begin{cases}
        -\Delta_p u = f(u) &  \text{ on } \Omega,\\
	  u>0 & \text{ on } \Omega,\\ 
        u = 0 & \text{ on }\partial\Omega,
    \end{cases}
\end{equation}
where~$\Omega \subset \mathbb{R}^N$ (with~$N\geq 2$) is a bounded~$C^2$ domain and~$f:[0,+\infty)\rightarrow [0,+\infty)$ is locally Lipschitz continuous in~$[0,+\infty)$, and~$f > \phi_0$ for some positive constant~$\phi_0$. 
We assume that at least one of the following conditions holds true:
\begin{enumerate}[label=(\alph*)]
     \item \label{Assumption 1 on f} $p\in(1,N)$ and there exists a nonincreasing function~$\phi:[0,+\infty)\rightarrow [0,+\infty)$ such that~$\phi\leq f \leq \frac{N}{N-p}\phi$;
     \item \label{Assumption 2 on f} $p\geq N$.
 \end{enumerate}
Our main goal is to estimate ``how far'' a~$C^{1,\alpha}(\overline{\Omega})$ weak solution of~\eqref{Problem} is from being radially symmetric and nonincreasing in terms of a deficit that measures how distant the domain~$\Omega$ is from a ball.  

More specifically, we define the deficit~$\eps$ by 
\begin{equation}\label{Definition of eps}
    \eps:=D(\Omega)+  \inf_{x_0\in \mathbb{R}^N}\int_{\partial \Omega}\left| \frac{N|\Omega|}{\mathcal{H}^{N-1}(\partial \Omega)}- \langle x-x_0, \nu\rangle\right|\,d\mathcal{H}^{N-1}(x),
\end{equation}
where~$\nu$ denotes the outward unit normal to~$\partial \Omega$ and~$D(\Omega)$ denotes the isoperimetric deficit of~$\Omega$ (see Section~\ref{Sec2} below for details).

Note that~$\eps$ is a nonnegative quantity which provides a good measure of how distant~$\Omega$ is from being a ball, since it is the sum of the isoperimetric deficit of~$\Omega$ and a term that measures how distant the outward unit normal of~$\partial \Omega$ is from the unit outward normal of a ball with measure equal to~$\Omega$. 

Furthermore, relations between~$D(\Omega)$ and the second summand in~\eqref{Definition of eps} are present in the literature (see, for example,~\cite{MR2672283}). \medskip

When~$\Omega$ is a ball, the solution~$u$ of~\eqref{Problem} is not only radially symmetric but also radially
nonincreasing (see~\cite{serra2013radial}) and, as such, it coincides with its Schwarz rearrangement~$u^*$, which is defined as 
\begin{equation}\label{Schwarz symmetral of u}
    u^*(x):=\sup\left\{t>0\;{\mbox{ s.t. }}\; |\{u>t\}|\leq \omega_N |x|^N\right\}.
\end{equation}
Therefore, the function~$u^*$ is a natural candidate to estimate how far~$u$ is from being radially symmetric and radially nonincreasing. 
Indeed, our main result is the following:

\begin{theorem}\label{Theorem 1.1} Let~$\Omega\subset \mathbb{R}^N$ be a bounded~$C^2$ domain and let~$f$ be a locally Lipschitz function such that~$f>\phi_0$ for some positive constant~$\phi_0$. 

Let~$u \in C^{1,\alpha}(\overline{\Omega})$ be a weak solution of~\eqref{Problem}. 

Assume that either~\ref{Assumption 1 on f} or~\ref{Assumption 2 on f} hold.

Then, there exist positive constants
$$C:=C(N,p,|\Omega|,\mathcal{H}^{N-1}(\partial \Omega),\phi_0,\|u\|_{L^{\infty}(\Omega)}, \|f\|_{W^{1,\infty}([0,M])}, \mathcal{M}_{0}^{-}) \qquad \text{and} \qquad \theta_1:=\theta_{1}(N,p) ,$$
where
\begin{equation}\label{M 0 -}
\mathcal{M}_{0}^{-}:=\displaystyle\max_{x\in\partial \Omega}\{-H(x),0\} ,
\end{equation}
such that\footnote{We agree to extend $u$ to the rest of $\mathbb{R}^N$ by setting $u \equiv 0$ in $\mathbb{R}^N \setminus \Omega$.}
    \begin{equation}\label{Quantitative stability}
        \min_{x_0 \in \mathbb{R}^N}\int_{\mathbb{R}^N}\left|u(x)-u^*(x+x_0)\right|\,dx \leq C\eps^{\theta_1} .
    \end{equation}
    
    Here, $H(x)$ denotes the mean curvature of~$\partial \Omega$ at~$x$ and~$\eps$ is the deficit defined in~\eqref{Definition of eps}.
\end{theorem}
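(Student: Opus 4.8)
The plan is to follow the structure of J. Serra's rigidity proof and make every inequality quantitative, tracking the loss at each step by a power of the deficit $\eps$. Recall that the rigidity argument combines the Pohozaev identity with the isoperimetric inequality along level sets of $u$: writing $\mu(t)=|\{u>t\}|$ and $P(t)=\mathcal{H}^{N-1}(\{u=t\})$, the coarea formula turns the Pohozaev identity into a pointwise (in $t$) inequality, and one shows that equality throughout forces each superlevel set to be a ball. So the first step is to establish a \emph{stability version of the Pohozaev-type identity}: starting from the weak formulation of \eqref{Problem} tested against $\langle x-x_0,\nabla u\rangle$ and against $u$, one produces an integral identity whose boundary terms involve exactly the quantity $\frac{N|\Omega|}{\mathcal{H}^{N-1}(\partial\Omega)}-\langle x-x_0,\nu\rangle$ appearing in the definition \eqref{Definition of eps} of $\eps$. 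Thus the boundary defect is controlled directly by $\eps$ (after using the $C^{1,\alpha}$ bound and the explicit uniform gradient bound mentioned in the abstract to bound $|\nabla u|$ on $\partial\Omega$).

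Second, I would pass to the level-set picture. Using the coarea formula and the Cauchy–Schwarz (or Hölder, for general $p$) inequality exactly as in the rigidity proof, one derives for a.e.\ $t$ a differential inequality relating $\mu(t)$, $P(t)$, $-\mu'(t)=\int_{\{u=t\}}|\nabla u|^{-1}$, and $\int_{\{u=t\}}|\nabla u|^{p-1}$; the isoperimetric inequality $P(t)\ge N\omega_N^{1/N}\mu(t)^{(N-1)/N}$ is inserted here. The key point is that the \emph{gap} in the chain of inequalities — the Cauchy–Schwarz deficit plus the isoperimetric deficit of the superlevel set $\{u>t\}$ — is, after integrating in $t$ against a suitable weight, bounded by the global Pohozaev defect from Step 1, hence by $C\eps$. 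This is the heart of the matter and mirrors the Pólya–Szegő stability strategy of Cianchi–Esposito–Fusco–Trombetti referenced in the introduction: one obtains $\int_0^M (\text{isoperimetric deficit of }\{u>t\})\,w(t)\,dt \le C\eps$ for an explicit positive weight $w$ bounded below thanks to $f>\phi_0$, the uniform gradient bound, and the structural hypothesis \ref{Assumption 1 on f} or \ref{Assumption 2 on f} (which is precisely what makes the weight in Serra's argument have a sign).

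Third, I would convert this integrated level-set control into the claimed $L^1$ bound on $u-u^*$. By the layer-cake representation, $\int_{\mathbb{R}^N}|u(x)-u^*(x+x_0)|\,dx$ can be written via the symmetric difference $|\{u>t\}\triangle (\text{ball of volume }\mu(t)\text{ centred at }x_0)|$ integrated in $t$; choosing $x_0$ appropriately (an approximate barycentre, or the infimising $x_0$ from \eqref{Definition of eps}) and invoking the quantitative isoperimetric inequality of Fusco–Maggi–Pratelli — which bounds the Fraenkel asymmetry of $\{u>t\}$ by the square root of its isoperimetric deficit — one gets $\int_0^M \text{(Fraenkel asymmetry of }\{u>t\})\,\mu(t)^{1/N}\,dt \le C\,(\int_0^M \text{isoperimetric deficit}\cdot w\,dt)^{1/2}\le C\eps^{1/2}$, after a Hölder step and using the uniform upper and lower bounds on $\mu$ and $w$. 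A standard argument (e.g.\ Hardy–Littlewood together with control of the variation of the barycentres in $t$, which again costs a power of the deficit) upgrades the level-set asymmetry control to the single common translation $x_0$ appearing on the left of \eqref{Quantitative stability}, yielding the exponent $\theta_1=\theta_1(N,p)$.

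The main obstacle I anticipate is Step 2: making the Cauchy–Schwarz/Hölder deficit in the level-set inequality genuinely quantitative and showing it is absorbed by the global Pohozaev defect, uniformly in $t$ and with constants depending only on the listed quantities. In particular one must handle the possible \emph{singular set} $\{\nabla u=0\}$ — where the $p$-Laplacian degenerates — carefully; this is where the ``quantitative estimate for the measure of the singular set'' promised in the abstract enters, to guarantee that the contribution of critical level sets and critical points to all the integrals above is itself $O(\eps^{\beta})$ for some $\beta>0$ and does not destroy the weight's positivity. Controlling the dependence of the weight $w$ on $t$ near $t=M=\|u\|_{L^\infty(\Omega)}$ (where $\mu(t)\to 0$) and near $t=0$ (boundary behaviour, entering through $\mathcal{M}_0^-$) is the other delicate point, and is presumably why the constant $C$ in the statement depends on $\mathcal{M}_0^-$ and on $\|f\|_{W^{1,\infty}([0,M])}$.
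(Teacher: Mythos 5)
Your proposal follows essentially the same route as the paper: a quantitative version of Serra's Pohozaev-plus-isoperimetric argument (the paper's Theorem~\ref{Theorem 1.4}, whose right-hand side is controlled by $\eps$ via the gradient bound), a lower bound on the weight $W(t)$ coming from $f>\phi_0$ and hypothesis~\ref{Assumption 1 on f} or~\ref{Assumption 2 on f} (Lemma~\ref{Positivity of W}), the Fusco--Maggi--Pratelli quantitative isoperimetric inequality and a quantitative H\"older inequality on the level sets, the singular-set estimate (Theorem~\ref{Quantitative estimate of Mu}), and finally the Cianchi--Esposito--Fusco--Trombetti machinery to convert level-set asymmetry into the $L^1$ bound. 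The only cosmetic difference is that the paper does not obtain control ``uniformly in $t$'' but rather outside exceptional sets of $t$ of measure $O(\eps^{1/2})$ via a Chebyshev-type selection, exactly the device your outline implicitly needs at the step you flag as the main obstacle.
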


\begin{remark} {\rm
We stress that the dependence of the constant~$C$ in Theorem~\ref{Theorem 1.1} on the domain only concerns the volume of~$\Omega$, its perimeter, and a bound on the negative part of the mean curvature. In particular it does not involve the regularity of the domain. 

In addition, for mean convex domains (i.e., domains with nonnegative mean curvature), the dependence on~$\mathcal{M}_{0}^{-}$ can be dropped, as we have~$\mathcal{M}_{0}^{-}=0$.}
\end{remark}

The proof of Theorem~\ref{Theorem 1.1} relies on the following two results (namely
Theorems~\ref{Quantitative estimate of Mu} and~\ref{Theorem 1.4})
which provide an estimate on the size of the set~$\{x \in \Omega\,{\mbox{ s.t. }}\,|\nabla u(x)|\leq \sigma\}$ and an integral identity from which one is able to deduce the radial symmetry of solutions to~\eqref{Problem} when~$\Omega$ is a ball, 
respectively. 

The first result is a consequence of summability properties of the gradient of a weak solution of~\eqref{Problem} presented in~\cite{MR2096703} together with a careful analysis of the proofs of said properties. The second result follows from a careful analysis of the arguments in~\cite{serra2013radial}. 

\begin{theorem}\label{Quantitative estimate of Mu} 
Let~$\Omega\subset \mathbb{R}^N$ be a bounded~$C^2$ domain and let~$f$ be a locally Lipschitz function such that~$f>\phi_0$ for some positive constant~$\phi_0$. 

Let~$u \in C^{1,\alpha}(\overline{\Omega})$ be a weak solution of~\eqref{Problem}, with~$p\in(1,+\infty)$.

Then, there exist positive constants
$$
C':=C'(N,p,|\Omega|,\mathcal{H}^{N-1}(\partial \Omega),\phi_0,\|u\|_{L^{\infty}(\Omega)}, \|f\|_{W^{1,\infty}([0,M])}, \mathcal{M}_{0}^{-}) \qquad  \text{and} \qquad  \theta_2:=\theta_2(p)
$$
such that, for all~$\sigma \geq 0$,
\begin{equation}\label{Estimate of Mu}
 \big|\{x \in \Omega\;{\mbox{ s.t. }}\; |\nabla u(x)|\leq \sigma\}\big|\leq C' \sigma^{\theta_2}.
\end{equation}
In particular, the set of critical points of~$u$ has zero $N-$dimensional Lebesgue measure.
\end{theorem}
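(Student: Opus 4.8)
The plan is to quantify the well-known fact that a weak solution of the $p$-Laplace equation with a right-hand side bounded away from zero cannot have a gradient that degenerates on a set of positive measure. The key ingredient is a higher summability estimate for $|\nabla u|^{-1}$ (equivalently, an integrability gain for negative powers of $|\nabla u|$), which is exactly what is extracted from the Sobolev-type regularity results in~\cite{MR2096703}. Concretely, I would first establish that there exist an exponent $\gamma = \gamma(p) > 0$ and a constant $C''$, depending on the quantities listed in the statement, such that
\begin{equation*}
\int_{\Omega} |\nabla u|^{-\gamma}\,dx \leq C''.
\end{equation*}
This is where the phrase ``a careful analysis of the proofs'' does the work: one has to track how the constants in the Cianciaruso–type or Damascelli–Sciunzi–type estimates of~\cite{MR2096703} depend on the data. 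The reason such an estimate can hold is that $f(u) \geq \phi_0 > 0$, so the equation $-\Delta_p u = f(u) \geq \phi_0$ forces $u$ to be strictly ``subharmonic-like'' and prevents flat spots; the singular set where $\nabla u = 0$ is then controlled by this positivity, and the mean-curvature bound $\mathcal{M}_0^-$ enters through the boundary behaviour (via the Hopf-type lemma / boundary gradient estimate) to ensure $|\nabla u|$ does not degenerate near $\partial\Omega$ either. The uniform gradient bound $\|\nabla u\|_{L^\infty} \leq C$ mentioned in the abstract is presumably obtained along the way and feeds into making the constants explicit.

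Given the negative-power integrability bound, the measure estimate~\eqref{Estimate of Mu} is a one-line Chebyshev argument: on the set $A_\sigma := \{x \in \Omega : |\nabla u(x)| \leq \sigma\}$ we have $|\nabla u|^{-\gamma} \geq \sigma^{-\gamma}$, hence
\begin{equation*}
|A_\sigma| \, \sigma^{-\gamma} \leq \int_{A_\sigma} |\nabla u|^{-\gamma}\,dx \leq \int_{\Omega} |\nabla u|^{-\gamma}\,dx \leq C'',
\end{equation*}
so that $|A_\sigma| \leq C'' \sigma^{\gamma}$, which is~\eqref{Estimate of Mu} with $C' = C''$ and $\theta_2 = \gamma$. (For $\sigma = 0$ one either lets $\sigma \to 0^+$ or notes directly that $|\nabla u|^{-\gamma} = +\infty$ on the critical set, so finiteness of the integral forces that set to be null.) The last sentence of the theorem — that the critical set of $u$ has zero Lebesgue measure — then follows immediately by taking $\sigma \to 0^+$.

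The main obstacle is the first step: extracting an \emph{explicit} dependence of the summability exponent and constant on $N$, $p$, $|\Omega|$, $\mathcal{H}^{N-1}(\partial\Omega)$, $\phi_0$, $\|u\|_{L^\infty}$, $\|f\|_{W^{1,\infty}}$ and $\mathcal{M}_0^-$, since the results of~\cite{MR2096703} are typically stated qualitatively. This requires revisiting the proof there — which couples a Caccioppoli-type inequality for the (possibly singular) second derivatives or for powers of $|\nabla u|$ with a covering/iteration argument — and checking that every constant that appears is either universal or controllable by the listed data; in particular one must handle the interior estimate and the boundary estimate separately, the latter being where the $C^2$ regularity of $\Omega$ is used only through the \emph{bound} $\mathcal{M}_0^-$ on the negative part of the mean curvature rather than through any finer regularity modulus. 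Everything after that is elementary, so essentially all the content of the theorem is concentrated in making the quantitative negative-power gradient estimate effective.
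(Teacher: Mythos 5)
Your final Chebyshev step is fine and is indeed how the paper concludes, but the entire content of the theorem sits in your ``first step'' --- a \emph{global} bound $\int_{\Omega}|\nabla u|^{-\gamma}\,dx\le C''$ --- which you assert rather than prove, and which is essentially equivalent to the statement being proved (by the layer-cake formula). This is a genuine gap, because the estimates you invoke from~\cite{MR2096703} are \emph{interior} estimates: as reproduced in Lemmata~\ref{First integral estimate} and~\ref{Second integral estimate}, they control $\int_E|\nabla u|^{-(p-1)r}\,dx$ only for $E\Subset\Omega$, with constants blowing up like $\delta_-(E)^{-2}$ as $E$ exhausts $\Omega$. So one cannot simply ``take $E=\Omega$'' after tracking constants; some additional mechanism is needed at the boundary, and the one you propose (a quantitative Hopf lemma ensuring $|\nabla u|$ is bounded below near $\partial\Omega$) would require control on the modulus of continuity of $\nabla u$ (i.e.\ the $C^{1,\alpha}$ norm) or on finer regularity of $\partial\Omega$, neither of which is among the admissible dependencies of $C'$ --- the paper explicitly emphasizes that no such regularity enters.

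The paper's actual device is different and is the point you are missing: for $\sigma\le 1$ it chooses the interior set \emph{depending on} $\sigma$, namely $E_\sigma:=\{x\in\Omega: \mathrm{dist}(x,\partial\Omega)>\sigma^{\theta/3}\}$ with $\theta=(p-1)r$. On $E_\sigma$, Chebyshev contributes a factor $\sigma^{\theta}$ which beats the $\delta_-(E_\sigma)^{-2}=\sigma^{-2\theta/3}$ blow-up of the interior estimate, leaving $\sigma^{\theta/3}$. On the boundary strip $\Omega\setminus E_\sigma$ no gradient information is used at all: one simply bounds $|\{|\nabla u|\le\sigma\}\cap(\Omega\setminus E_\sigma)|\le|\Omega\setminus E_\sigma|=\mathcal{V}(\sigma^{\theta/3})\lesssim(1+\mathcal{M}_0^-)\mathcal{H}^{N-1}(\partial\Omega)\,\sigma^{\theta/3}$ via the tube-volume estimate~\eqref{Size of the tube} --- this, and not a Hopf-type argument, is where $\mathcal{M}_0^-$ enters. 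To repair your proof you should replace the claimed global negative-power integrability by this $\sigma$-dependent splitting; as written, the hard half of the argument is absent and the sketched route to it does not close with the stated constants.
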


In what follows, we denote by~$c_N$ the sharp isoperimetric constant, i.e.,
\begin{equation*}
c_N := \frac{ \mathcal{H}^{N-1}( \partial B_1)}{| B_1 |^{\frac{N-1}{N}}} ,
\end{equation*}
where~$B_1$ denotes the unit ball in~$\mathbb{R}^N$ centered at the origin.

\begin{theorem}\label{Theorem 1.4}
Let~$\Omega\subset \mathbb{R}^N$ be a bounded~$C^2$ domain, $f$ be a nonnegative locally Lipschitz\footnote{Here locally bounded is enough.} function and~$u \in C^{1,\alpha}(\overline{\Omega})$ be a weak solution of~\eqref{Problem}. 

Assume that either~\ref{Assumption 1 on f} or~\ref{Assumption 2 on f} hold and let~$x_0 \in \mathbb{R}^N$.

Set
$$
M:= \max_{\overline{\Omega}} u , \qquad  \mu(t):= \left|\left\{x \in \mathbb{R}^N\;{\mbox{ s.t. }}\; u(x)>t\right\}\right|, \qquad{\mbox{and}}\qquad
I(t):=\int_{\{u>t\}} f(u(x))\, dx .
$$ 

Then, the following identity holds
\begin{equation}\label{Identity}\begin{split}&
\int_0^M W(t)\big(D_1(t)+D_2(t)\big)\,dt
+\frac{c_N^{p'}|\Omega|}{\mathcal{H}^{N-1}(\partial\Omega)}D_3\\&\qquad\qquad
= \left(\int_{\Omega} f(u(x))\,dx\right)^{p'} \frac{c_N^{p'}|\Omega|}{\left(\mathcal{H}^{N-1}(\partial\Omega)\right)^{p'}}D_4 + \frac{c_N^{p'}}{N}D_5(x_0),\end{split}
\end{equation}
where
\begin{eqnarray}\label{W}
    W(t) &:= &p'\mu(t)^{\frac{p-N}{N(p-1)}}f(t) + \frac{p-N}{N(p-1)} I(t) \mu(t)^{\frac{p-pN}{N(p-1)}},\\
\label{D1}
    D_1(t)& :=& \left(\int_{\{u=t\}}|\nabla u|^{p-1}\,d\mathcal{H}^{N-1}\right)^{\frac{1}{p-1}} \left(\int_{\{u=t\}} |\nabla u|^{-1}\,d \mathcal{H}^{N-1}\right)\\&&\qquad -\Big( \mathcal{H}^{N-1}(\{u = t\})\Big)^{p'},
\\
\label{D2}
D_2(t) &:=& \Big(\mathcal{H}^{N-1}(\{u = t\})\Big)^{p'} -
\Big(c_N \mu(t)^{(N-1)/N}\Big)^{p'},
\\
\label{D3}
    D_3 &:=& \int_{\partial\Omega}   \left|\frac{\partial u}{\partial \nu}\right|^p\,d\mathcal{H}^{N-1} - \frac{\left(\displaystyle\int_{\partial\Omega}  \left|\frac{\partial u}{\partial \nu}\right|^{p-1}\,d\mathcal{H}^{N-1}\right)^{p'} }{(\mathcal{H}^{N-1}\left(\partial\Omega\right))^{1/(p-1)}}, 
\\
\label{D45}
D_4 &:=& \frac{\left(\mathcal{H}^{N-1}\left(\partial\Omega\right)\right)^{p'}}{(c_N |\Omega|^{1/N'}))^{p'}}- 1, \\
  {\mbox{and}}\quad  D_5(x_0) &:=&\int_{\partial\Omega} \big(
  C_{\Omega}-\langle x-x_0,\nu\rangle \big)\left|\frac{\partial u}{\partial \nu}\right|^p\,d\mathcal{H}^{N-1},
\end{eqnarray}
where
$$
C_{\Omega}:=\frac{ N |\Omega|}{\mathcal{H}^{N-1}\left(\partial\Omega\right)}
$$
and~$p' = \frac{p}{p-1}$ denotes the H\"older conjugate exponent of~$p$. 
\end{theorem}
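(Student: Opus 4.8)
The plan is to follow the strategy behind the Lions--Kesavan--Pacella--Serra proof of radial symmetry, but instead of throwing away the error terms in every inequality, we keep track of all of them and collect them as the quantities $D_1,\dots,D_5$. The starting point is the Pohozaev-type identity for weak $C^{1,\alpha}$ solutions of \eqref{Problem}: for any $x_0 \in \mathbb{R}^N$,
\[
\frac{N-p}{p}\int_\Omega |\nabla u|^p\,dx = \int_\Omega N F(u)\,dx - \frac{1}{p'}\int_{\partial\Omega}\langle x-x_0,\nu\rangle\left|\frac{\partial u}{\partial \nu}\right|^p\,d\mathcal{H}^{N-1},
\]
where $F' = f$; one also uses the equation tested against $u$ to rewrite $\int_\Omega|\nabla u|^p = \int_\Omega u f(u)$, and integrates by parts (layer-cake) to express everything through the distribution function $\mu(t)$ and $I(t)$. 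The second ingredient is the coarea formula: writing $\int_\Omega |\nabla u|^p\,dx = \int_0^M\left(\int_{\{u=t\}}|\nabla u|^{p-1}\,d\mathcal{H}^{N-1}\right)dt$ and $-\mu'(t) = \int_{\{u=t\}}|\nabla u|^{-1}\,d\mathcal{H}^{N-1}$ for a.e.\ $t$ (which is legitimate once Theorem~\ref{Quantitative estimate of Mu} guarantees the critical set is null, so the level sets behave well).

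First I would combine Hölder's inequality on the level set $\{u=t\}$,
\[
\mathcal{H}^{N-1}(\{u=t\}) = \int_{\{u=t\}} |\nabla u|^{\frac{p-1}{p}}|\nabla u|^{-\frac{p-1}{p}}\,d\mathcal{H}^{N-1} \le \left(\int_{\{u=t\}}|\nabla u|^{p-1}\right)^{\frac1p}\left(\int_{\{u=t\}}|\nabla u|^{-1}\right)^{\frac1{p'}},
\]
with the isoperimetric inequality $\mathcal{H}^{N-1}(\{u=t\}) \ge c_N \mu(t)^{(N-1)/N}$. Rearranging the Hölder inequality gives
\[
\int_{\{u=t\}}|\nabla u|^{p-1}\,d\mathcal{H}^{N-1} \ge \frac{\bigl(\mathcal{H}^{N-1}(\{u=t\})\bigr)^{p'}}{\left(\int_{\{u=t\}}|\nabla u|^{-1}\right)^{p'-1}} = \frac{\bigl(\mathcal{H}^{N-1}(\{u=t\})\bigr)^{p'}}{(-\mu'(t))^{p'-1}},
\]
and the \emph{excess} in this step is precisely $D_1(t)$ once multiplied through by $(-\mu'(t))^{p'-1}$; similarly the gap between $\mathcal{H}^{N-1}(\{u=t\})^{p'}$ and its isoperimetric lower bound is $D_2(t)$. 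The same pair of inequalities applied on $\partial\Omega$ — Hölder between $\int_{\partial\Omega}|\partial_\nu u|^p$, $\int_{\partial\Omega}|\partial_\nu u|^{p-1}$ and $\mathcal{H}^{N-1}(\partial\Omega)$, together with the isoperimetric inequality $\mathcal{H}^{N-1}(\partial\Omega) \ge c_N|\Omega|^{1/N'}$ — produce the boundary deficits $D_3$ and $D_4$, and the remaining boundary term from the Pohozaev identity, after subtracting and adding the constant $C_\Omega = N|\Omega|/\mathcal{H}^{N-1}(\partial\Omega)$, becomes $D_5(x_0)$ (the subtraction is harmless because $\int_{\partial\Omega}\langle x-x_0,\nu\rangle\,d\mathcal{H}^{N-1} = N|\Omega|$ by the divergence theorem).

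The remaining work is bookkeeping: one assembles the ODE-type identity that Serra derives for the function $t\mapsto I(t)\mu(t)^{(p-N)/(N(p-1))}$ — differentiating, using $I'(t) = -f(t)(-\mu'(t))$ (again by coarea) — which is exactly where the weight $W(t)$ in \eqref{W} appears, integrate in $t$ from $0$ to $M$, and substitute the two exact relations above (the level-set one and the boundary one) in place of the inequalities Serra used. Because no inequality is discarded, what was a one-sided estimate becomes the exact identity \eqref{Identity}. The assumption \ref{Assumption 1 on f} or \ref{Assumption 2 on f} is used, as in \cite{serra2013radial}, only to guarantee that $W(t) \ge 0$ (so that in the rigidity case each term has a definite sign); it plays no role in deriving the identity itself, which is why the footnote can relax ``locally Lipschitz'' to ``locally bounded.'' The main obstacle I anticipate is purely technical rather than conceptual: justifying the coarea/layer-cake manipulations and the differentiation of $t\mapsto I(t)\mu(t)^{\cdots}$ at the low regularity $C^{1,\alpha}(\overline\Omega)$, in particular that $\mu$ is absolutely continuous with $\mu'(t) = -\int_{\{u=t\}}|\nabla u|^{-1}\,d\mathcal{H}^{N-1}$ for a.e.\ $t$ and that $\{u=t\}$ is $\mathcal{H}^{N-1}$-rectifiable for a.e.\ $t$ — this is where Theorem~\ref{Quantitative estimate of Mu}, ensuring the critical set is Lebesgue-null, is essential, together with Sard-type and coarea arguments for Sobolev functions. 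Everything else is a careful but routine rearrangement of the classical proof.
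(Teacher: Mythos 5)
Your proposal follows essentially the same route as the paper's proof: differentiate Serra's function $K(t)=I(t)^{p'}\mu(t)^{\frac{p-N}{N(p-1)}}$ (note the exponent $p'$ on $I(t)$, which you dropped in your write-up but which is exactly what makes the weight $W(t)$ emerge upon using $I'=f\mu'$), integrate over $(0,M)$, and turn Serra's H\"older and isoperimetric inequalities on the level sets and on $\partial\Omega$ into exact identities by retaining the deficits $D_1,\dots,D_5$, with the Pohozaev identity and the normalization $\int_{\partial\Omega}\langle x-x_0,\nu\rangle\,d\mathcal{H}^{N-1}=N|\Omega|$ handling the boundary terms. This is the paper's argument, and your identification of where the regularity/coarea issues lie and of the fact that \ref{Assumption 1 on f}--\ref{Assumption 2 on f} only serve to sign $W$ is accurate.
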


\begin{remark} {\rm
The integral identity in~\eqref{Identity}, in spite of its seemingly technical flavor,
is important because it
provides the starting point of our quantitative analysis. As a matter of fact, the radial symmetry of solutions of~\eqref{Problem} when~$\Omega$ is a ball follows immediately from~\eqref{Identity} (see Remark~\ref{Implication of the Identity}). }
\end{remark}

The rest of the paper is organized as follows. In Section~\ref{Sec2} we present the notation and background results
that we use throughout the paper. In Section~\ref{SEc3} we prove Theorem~\ref{Theorem 1.4}. In Section~\ref{SEc4} we provide an explicit gradient bound (via $P$-function approach) that will be useful in the level set analysis of a solution of~\eqref{Problem} performed in Section~\ref{Estimates on level sets}.
In Section~\ref{Section 5} we prove Theorem~\ref{Quantitative estimate of Mu}. We finalize the paper with the proof of Theorem~\ref{Theorem 1.1} in Section~\ref{sec7}. 


\section{Notation and known facts from the literature}\label{Sec2}

In this section, we present the notation and the results used throughout the paper.

Given a set~$A\subset \mathbb{R}^{N}$, we denote by~$|A|$ the $N-$dimensional Lebesgue measure of~$A$ and by~$\mathcal{H}^{N-1}(A)$
its $(N-1)-$dimensional Hausdorff measure.  

Given a finite positive measure space~$(X,\varpi)$ and a $\varpi-$measurable function~$f: X\rightarrow \mathbb{R}$, we denote by~$\fint_X f\,d\varpi$ the average of~$f$ over~$X$
according to the measure~$\varpi$, i.e., 
$$
\fint_X f\,d\varpi:= \frac{1}{\varpi(X)}\int_X f\,d\varpi.
$$

Throughout the paper, we will make extensive use of the coarea formula (see~\cite{MR0257325}), more specifically of the coarea formula for Sobolev functions~$u\in W^{1,1}_{\text{loc}}(\mathbb{R}^N)$, that states that, for any measurable function~$v:\mathbb{R}^N\rightarrow [0,+\infty)$,
\begin{equation}\label{Coarea formula 1}
    \int_{\mathbb{R}^N} v(x) |\nabla u(x)| \,dx = \int_{\mathbb{R}}\int_{\{u = t\}} v(x)\,d\mathcal{H}^{N-1}\,dt.
\end{equation}
In particular, for any bounded open set~$\Omega \subset \mathbb{R}^N$,
\begin{equation*}
    \begin{split}
        |\Omega| &= \big|\{x \in \Omega\;{\mbox{ s.t. }}\; |\nabla u(x)| = 0\}\big| + \big|\{x\in \Omega\;{\mbox{ s.t. }}\; \nabla u(x) \neq 0\}\big|\\
        & = \big|\{x \in \Omega\;{\mbox{ s.t. }}\; |\nabla u(x)| = 0\}\big|+ 
        \int_{\Omega\cap \{\nabla u \neq 0\}}\frac{|\nabla u(x)|}{|\nabla u(x)|}\,dx\\
        & = \big|\{x \in \Omega\;{\mbox{ s.t. }}\; |\nabla u(x)| = 0\}\big|+ \int_{\mathbb{R}}\int_{\{u = t\}\cap \Omega} \frac{\chi_{\{\nabla u \neq 0\}}(x)}{|\nabla u(x)|}\,d\mathcal{H}^{N-1}\,dt.
    \end{split}
\end{equation*}

Given a function~$u: \mathbb{R}^N\rightarrow [0,+\infty)$ such that~$|\{x \in \mathbb{R}^N\,{\mbox{ s.t. }}\, u(x)>0\}|<+\infty$, we define the distribution function~$\mu: [0,+\infty) \rightarrow [0,+\infty)$ of~$u$ by 
\begin{equation}\label{Distribution function of u}
    \mu(t):= \left|\left\{x \in \mathbb{R}^N\;{\mbox{ s.t. }}\; u(x)>t\right\}\right|.
\end{equation}
The Schwarz symmetric rearrangement~$u^*$ of~$u$ is defined, for any~$x \in \mathbb{R}^N$, by 
\begin{equation}\label{Schwarz symmetral of u}
    u^*(x):=\sup\left\{t>0\;{\mbox{ s.t. }}\; \mu(t)\leq \omega_N |x|^N\right\}.
\end{equation}
Note that, by definition, the function~$u^*$ is radially symmetric and the distribution function~$\mu^*(t)$ of~$u^*$ coincides with the distribution function of~$u$, i.e., $\mu(t) = \mu^*(t)$.

Furthermore, if~$u \in W^{1,p}\left(\mathbb{R}^N\right)$, then~$u^* \in W^{1,p}\left(\mathbb{R}^N\right)$, where~$W^{1,p}\left(\mathbb{R}^N\right)$ denotes the Sobolev Space of weakly differentiable measurable functions that are $p-$integrable and whose (weak) derivative is $p-$integrable, and the Pólya-Szegő inequality states that
\begin{equation*}
    \int_{\mathbb{R}^N}|\nabla u^*(x)|^p \,dx \leq \int_{\mathbb{R}^N}|\nabla u(x)|^p\,dx.
\end{equation*}
Quantitative versions of this inequality for a rather general class of functions in~$W^{1,p}\left(\mathbb{R}^N\right)$ have been studied in~\cite{CianchiEspositoFuscoTrombetti+2008+153+189} and for $\log-$concave and $\alpha-$concave functions in~\cite{MR3250365}.

A consequence of Lemmata~$2.4$ and~$2.6$ in~\cite{Cianchi2002FunctionsOB} is that 
$$
\frac{d}{dt}\left|\left\{x \in \mathbb{R}^N\;{\mbox{ s.t. }}\; u^*(x)>t\,{\mbox{ and }}\, |\nabla u^*(x)|=0\right\}\right| = 0, 
$$
and thus the coarea formula shows that, for a.e.~$t \geq 0$,
\begin{equation}\label{Derivative of the distribution function 1}
\begin{split}
   \frac{d}{dt}\mu(t) &= \frac{d}{dt}\left(
   \big|\big\{x \in \{u>t\}\;{\mbox{ s.t. }}\; |\nabla u^*(x)| = 0\big\}\big|+ \int_{\mathbb{R}}\int_{\{u^* = t\}} \frac{\chi_{\{\tau>t\}}(\tau)}{|\nabla u^*(x)|}\,d\mathcal{H}^{N-1}\,d\tau \right)\\
   &=-\int_{\{u^*=t\}} \frac{d\mathcal{H}^{N-1}}{|\nabla u^*(x)|}.
   \end{split}
\end{equation}

The isoperimetric inequality and its quantitative counterpart will also play an important role throughout this paper,
therefore we present here the basic definitions and results that we will use. 

Given an open subset~$\Omega$ of~$\mathbb{R}^N$ and a measurable subset~$E$ of~$\mathbb{R}^N$, the perimeter of~$E$ in~$\Omega$ is given by 
\begin{equation}\label{Perimeter of a set}
    P(E;\Omega) := \sup\left\{\int_{E} \div \varphi \,dx \,:\, \varphi\in C^{\infty}_c(\Omega;\mathbb{R}^N),\, \|\varphi\|_{L^{\infty}}\leq 1\right\}.
\end{equation}
When~$\Omega = \mathbb{R}^N$ we denote the perimeter of~$E$ in~$\mathbb{R}^N$ by~$P(E)$.

We say that~$E$ is a set of finite perimeter if~$P(E)<+ \infty$. In this case, the isoperimetric inequality gives that
\begin{equation}\label{Isoperimetric inequality 1}
    P(E)\geq P(B_R),
\end{equation}
where~$B_R$ is a ball with the same $N-$dimensional Lebesgue measure of~$E$, and we have equality if and only if~$E$ is equivalent to a ball.

The isoperimetric inequality can also be written in the following equivalent form:
\begin{equation}\label{Isoperimetric inequality 2}
    P(E) \geq c_N \left|E\right|^{\frac{N-1}{N}}, \qquad \text{where }\; c_N := \frac{P(B_1)}{| B_1 |^{\frac{N-1}{N}}}.
\end{equation}
Recall that when~$E$ is of class~$C^1$, we have that~$P(E)= \mathcal{H}^{N-1}(\partial E)$. 

Throughout the years, many authors have studied quantitative versions of the isoperimetric inequality: one that will be useful in the present paper is the one put forth
in~\cite{MR2456887}, where  N. Fusco, F. Maggi and A. Pratelli showed that there exists a constant~$\gamma(N)$ such that when~$E$ is a set of finite perimeter, then 
\begin{equation}\label{Quantitative isoperimetric inequality}
   \displaystyle\min_{x\in \mathbb{R}^N}\left\{\frac{\left|E \Delta B_R(x)\right|^2}{|B_R|^2}\right\}\leq \gamma(N) D(E),
\end{equation}
where~$B_R(x)$ denotes the ball of radius~$R$ and center~$x$, $R$ is such that~$|B_R|=|E|$, $E\Delta B_R(x):= \left( E\setminus B_R(x) \right) \cup \left( B_R(x)\setminus E \right)$ denotes the symmetric difference of the sets~$E$ and~$B_R(x)$, and~$D(E)$ denotes the isoperimetric deficit of the set~$E$, that is,
\begin{equation}\label{e8239ryjkefg239yr3829}
D(E) := \frac{P(E)-P(B_R)}{P(B_R)} .
\end{equation}
For an extensive exposition on the isoperimetric inequality and various quantitative versions of the isoperimetric inequality such as the one above see~\cite{Fusco_2015}.

To make the quantitative study of the level sets mentioned in the Introduction we will also use a quantitative version of H\"older's inequality that can be found in Proposition~2.3 of~\cite{CianchiEspositoFuscoTrombetti+2008+153+189}. This result states that, for every~$p\in(1,+\infty)$, there exists a positive
constant~$C_p$ such that, for every finite positive measure space~$(X,\varpi)$ and
every~$\varpi-$measurable function~$f: X\rightarrow [0,+\infty)$ 
such that~$f>0$ $\varpi-$a.e. in~$X$, $\int_X f^{p-1}\,d\varpi <+\infty$ and~$\int_X f^{-1}\,d\varpi<+\infty$, one has that
\begin{equation}\label{Quantitative Holder}
\fint_X\left|\frac{1}{f}-\overline{f}\right|\,d\varpi \leq C_p \overline{f}\begin{cases} D_{f}^{\frac1{p}},\,&\quad\text{if }\, p\geq 2\\
\left(D_{f}^{p-1}+D_{f}^{\frac{1}{p-1}}\right)^{\frac1{p}}, \,&\quad\text{if }\, 1<p< 2,
\end{cases}
\end{equation}
where
\begin{eqnarray*} 
\overline{f}&:=& \left(\fint_X\frac{1}{f}\, d\varpi \right)^{\frac1{p'}}\left(\fint_X f^{p-1}\, d\varpi \right)^{-\frac{1}{p-1}}\\
{\mbox{and }}\quad 
D_{f}&:= &\left(\fint_X \frac{1}{f}\, d\varpi \right)^{p-1}\left(\fint_X f^{p-1}\, d\varpi \right)-1.
\end{eqnarray*}
Here above and in the rest of the paper~$p' = \frac{p}{p-1}$ denotes the H\"older conjugate exponent of~$p$. 

Given a subset~$\Omega$ of~$\mathbb{R}^N$ we define, for all~$\delta \geq 0$,
$$
\Omega_{\delta}:=\big\{x\in \mathbb{R}^N\;{\mbox{ s.t. }}\; \text{dist}(x,\Omega)\leq \delta\big\}.
$$
Lemma~2.3 in~\cite{poggesi2024bubbling} shows that
when~$\Omega$ is a bounded domain of class~$C^2$, the volume of the portion of the $\delta-$neighbourhood of~$\partial \Omega$ contained in~$\Omega$, denoted by~$\mathcal{V}(\delta)$, has the following upper bound
\begin{equation}\label{Size of the tube}
    \mathcal{V}(\delta) := \big|\big\{x\in \Omega\;{\mbox{ s.t. }}\; \text{dist}(x,\partial \Omega)\leq \delta\big\}\big|\leq \left(1+\delta\mathcal{M}_0^{-}\right)^{N-1}\mathcal{H}^{N-1}\left(\partial\Omega \right) \delta,
\end{equation}
where~$\mathcal{M}_{0}^{-}$ is as defined in~\eqref{M 0 -}.

Now take~$\Omega\subset \mathbb{R}^N$ to be a bounded~$C^2$ domain and~$u \in C^{1,\alpha}(\overline{\Omega})$ a (weak) solution of~\eqref{Problem}. Let
$
M:= \| u \|_{L^{\infty}(\Omega)} .
$
Since~$f(u)>\phi_0$ and~$|\{0<u<M\}\cap \{\nabla u = 0\}|= 0$, it follows from the proof of Lemma~4 in~\cite{serra2013radial} and the coarea formula that,
for a.e.~$t\in (0,M)$,
\begin{equation}\label{Derivative of the distribution function 2}
    \frac{d}{dt}\mu(t) = -\int_{\{u =t\}} \frac{d\mathcal{H}^{N-1}}{|\nabla u|}.
\end{equation}
Therefore, for a.e.~$t \in (0,M)$,
\begin{equation}\label{Equality of the derivative of dist fct}
    \int_{\{u =t\}} \frac{d\mathcal{H}^{N-1}}{|\nabla u|}= \int_{\{u^* =t\}} \frac{d\mathcal{H}^{N-1}}{|\nabla u^*|}.
\end{equation}
Furthermore, by standard elliptic regularity theory we know that $u \in C^2(\Omega\setminus \{x \in \Omega:\,\nabla u =0\})$ (see~\cite{MR0709038, MR0737190}).
Also, as in~\cite{serra2013radial}, we define the two additional distribution type functions~$I(t)$ and~$K(t)$ for~$t \in (0,M)$ by 
\begin{equation}\label{Definition of I and K}
I(t):=\int_{\{u>t\}} f(u(x))\, dx\qquad{\mbox{and}}\qquad
K(t) := I(t)^{p'} \mu(t)^{\frac{p-N}{N(p-1)}}.
\end{equation}
{F}rom Lemma~4 in~\cite{serra2013radial} (under the assumption that~$u$ is a solution of~\eqref{Problem}) we know that the functions~$I$ and~$K$ are a.e. differentiable, with
 \begin{equation}\label{Derivative of I}
     I'(t) = f(t)\mu'(t)
 \end{equation}
and
 \begin{equation}\label{Derivative of K}
    K'(t) = \left(p'I(t)^{\frac{1}{p-1}} \mu(t)^{\frac{p-N}{N(p-1)}}f(t) + \frac{p-N}{N(p-1)} I(t)^{p'} \mu(t)^{\frac{p-Np}{N(p-1)}} \right) \mu'(t),
\end{equation}
where both equalities hold for a.e.~$t\in (0,M)$.

{F}rom Lemma~5 in~\cite{serra2013radial} we also know that the following Gauss-Green identity holds true
\begin{equation}\label{I}
    I(t) = \int_{ \{u=t\} } |\nabla u|^{p-1} d\mathcal{H}^{N-1},
\end{equation}as well as 
that the isoperimetric inequality holds for the level sets of the solution~$u$, namely
\begin{equation}\label{Isoperimetric inequality for level sets}
    \mathcal{H}^{N-1}(\{u=t\}) \geq c_N \mu(t)^{\frac{N-1}{N}},
\end{equation}
where~$c_N$ is the sharp isoperimetric constant in~$\mathbb{R}^N$. 

We finish this section by recalling the Pohozaev identity (see~\cite{A-general-variational-identity}):
\begin{equation}\label{Pohozaev}
\int_{\Omega}\left( N F(u(x)) +\frac{N-p}{p}u(x)f(u(x))\right)\,dx 
=\frac{1}{p'} \int_{\partial \Omega}\left|\frac{\partial u}{\partial \nu}(x)\right|^{p}\langle x-x_0, \nu\rangle\,d\mathcal{H}^{N-1},
\end{equation}
where~$F(u) := \displaystyle\int_{0}^{u} f(s)\,ds$ and~$x_0 \in \mathbb{R}^N$.

\section{A fundamental identity and proof of Theorem~\ref{Theorem 1.4}}\label{SEc3}
Following the arguments presented in~\cite{serra2013radial} we can deduce an identity that contains (almost) all the information about the level and super-level sets of the solution of~\eqref{Problem}, as well as the geometry of the domain~$\Omega$ and how they relate to each other.
This identity will be crucial to detect how close the solution of~\eqref{Problem} is to be radially symmetric via a level and super-level set analysis when the set~$\Omega$ is not a ball (besides, it also provides an alternative proof of radial symmetry of solutions of~\eqref{Problem} when~$\Omega$ is a ball).

\begin{proof}[Proof of Theorem~\ref{Theorem 1.4}]
{F}rom the properties of~$I$, $K$ and~$\mu$ contained in~\eqref{Derivative of the distribution function 2},
\eqref{Derivative of K} and~\eqref{I}, it follows that
\begin{equation}\label{K(0-)}
    \begin{split}&
        K(0^{-})\\&= \int_{0}^M -K'(t)\,dt\\
        &= -\int_0^M  \left(p'I(t)^{\frac{1}{p-1}} \mu(t)^{\frac{p-N}{N(p-1)}}f(t) + \frac{p-N}{N(p-1)} I(t)^{p'} \mu(t)^{\frac{p-Np}{N(p-1)}} \right) \mu'(t)\,dt\\
        &= \int_0^M  \left(p'I(t)^{\frac{1}{p-1}} \mu(t)^{\frac{p-N}{N(p-1)}}f(t) + \frac{p-N}{N(p-1)} I(t)^{p'} \mu(t)^{\frac{p-Np}{N(p-1)}} \right) \left(\int_{\{u=t\}} |\nabla u|^{-1}\,d\mathcal{H}^{N-1}\right)\,dt\\
        &= \int_0^M \left(p' \mu(t)^{\frac{p-N}{N(p-1)}}f(t) + \frac{p-N}{N(p-1)} I(t) \mu(t)^{\frac{p-Np}{N(p-1)}} \right) I(t)^{\frac{1}{p-1}} \left(\int_{\{u=t\}} |\nabla u|^{-1}\,d\mathcal{H}^{N-1}\right)\, dt\\
        &= \int_0^M W(t)\left(\int_{\{u = t\}} |\nabla u| d \mathcal{H}^{N-1}\right)^{\frac{1}{p-1}} \left(\int_{\{u = t\}} \frac{d \mathcal{H}^{N-1}}{|\nabla u|}\right)\,dt.
    \end{split}
\end{equation}
Using the definitions of~$W(t)$, $D_1(t)$ and~$D_2(t)$,
contained, respectively, in~\eqref{W}, \eqref{D1} and~\eqref{D2},
we see that
\begin{equation}\label{est 2}
    K(0^-) = \int_0^M W(t)\big(D_1(t)+ D_2(t)\big)\,dt + c_N^{p'}\int_0^M \left(p' f(t)\mu(t) + \frac{p-n}{n(p-1)}I(t)\right)\,dt.
\end{equation}Also, using the Pohozaev's identity,
\begin{equation*}
\begin{split}
     c_N^{p'}\int_0^M p' f(t)\mu(t) + \frac{p-N}{N(p-1)}I(t)\,dt &= \frac{c_N^{p'}}{N} \int_{\partial \Omega} \langle x-x_0, \nu\rangle \left|\frac{\partial u}{\partial \nu}\right|^p\,d\mathcal{H}^{N-1},
\end{split}
\end{equation*}
for any~$x_0 \in \mathbb{R}^N$.

Since
\begin{equation*}
    K(0^-) = \left(\int_{\Omega} f(u)\,dx\right)^{p'}|\{u>t\}|^{\frac{p-N}{N(p-1)}},
\end{equation*}
and recalling~\eqref{est 2} and the definitions of the deficits~$D_3$, $D_4$
and~$D_5(x_0)$, we find that
\begin{equation*}
    \begin{split}
       \int_0^M W(t)\big(D_1(t)+ D_2(t)\big)\,dt &= \left(\int_{\Omega} f(u)\,dx\right)^{p'}|\Omega|^{\frac{p-N}{N(p-1)}} - \frac{c_N^{p'}}{N} \int_{\partial \Omega} \langle x-x_0, \nu\rangle \left|\frac{\partial u}{\partial \nu}\right|^p \,d\mathcal{H}^{N-1} \\
        &= \frac{c_N^{p'}|\Omega|}{\mathcal{H}^{N-1}(\partial\Omega)^{p'}}\left(\int_{\Omega} f(u)\,dx\right)^{p'} D_4 + \frac{c_N^{p'}}{N}D_5(x_0)- \frac{c_N^{p'}|\Omega|}{\mathcal{H}^{N-1}\left(\partial\Omega\right)}D_3.
    \end{split}
\end{equation*}
The proof of Theorem~\ref{Theorem 1.4} is thereby complete.
\end{proof}

\begin{remark}\label{Implication of the Identity} 
{\rm
    We point out that the deficits~$D_1(t)$ and~$D_3$ are nonnegative due to H\"older's inequality, and that the deficits~$D_2(t)$ and~$D_4$ are also nonnegative due to the isoperimetric inequality.

Also note that when~$\Omega$ is a ball, the deficit~$D_4$ is zero and there exists~$x_0\in\mathbb{R}^N$ for which~$D_5(x_0)$ is also zero. This, together with~\eqref{Identity} and the fact that the deficits~$D_1(t)$, $D_2(t)$ and~$D_3$ are all nonnegative, implies that~$D_1(t)$, $D_2(t)$ and~$D_3$ are zero for a.e.~$t>0$ since~$W(t)\geq 0$ (see Lemma~\ref{Positivity of W} below). In particular, since the isoperimetric deficits of the super level sets~$\{u>t\}$ of~$u$ are zero, the level sets of~$u$ are spheres. 

Moreover, since~$D_1(t)$ is zero for a.e.~$t>0$, we also see that~$|\nabla u|$ is constant on the level sets~$\{u=t\}$ for a.e.~$t>0$. This implies that the solution of~\eqref{Problem} is radially symmetric (see, e.g., 
\cite{serra2013radial}).   }
\end{remark}

\section{A gradient bound}\label{SEc4}

This section is devoted to obtaining a gradient bound which, in the next section, will be used to justify and compute the deficit that we use to measure how ``distant'' the domain~$\Omega$ is from a ball.

\begin{lemma}
Let~$\Omega\subset \mathbb{R}^N$ be a bounded~$C^2$ domain, $f \ge 0$ be a locally Lipschitz function and~$u \in C^{1,\alpha}(\overline{\Omega})$ be a weak solution of~\eqref{Problem}.

Then,
\begin{equation}\label{Gradient bound}
\|\nabla u \|_{L^{\infty}(\Omega)} \leq \overline{C} ,
\end{equation}
where~$\overline{C}$ is a constant  depending only on~$p$, $N$, $M$, $\|f\|_{L^{\infty}([0,M])}$, and~$\mathcal{M}_{0}^{-}$. 

Here, 
$$
M:= \| u \|_{L^\infty (\Omega)} \qquad \text{and} \qquad \mathcal{M}_{0}^{-}:=\displaystyle\max_{x\in\partial \Omega}\{-H(x),0\}.
$$
\end{lemma}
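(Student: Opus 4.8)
The plan is to establish the gradient bound via the classical $P$-function technique, adapted to the $p$-Laplacian. First I would recall that by interior elliptic regularity (as noted after \eqref{Equality of the derivative of dist fct}) the solution $u$ is $C^2$ away from its critical set, so the $P$-function is well-defined and smooth there; and since $u \in C^{1,\alpha}(\overline{\Omega})$, the gradient is already known to be bounded — the point is to make the bound \emph{explicit} and to control it only by $p$, $N$, $M$, $\|f\|_{L^\infty([0,M])}$, and $\mathcal{M}_0^-$. The natural auxiliary function is
\begin{equation*}
P(x) := \frac{p-1}{p}\,|\nabla u(x)|^{p} + F_M(u(x)) ,
\end{equation*}
where $F_M$ is a suitable primitive of (a truncation of) $f$ so that $F_M \ge 0$ and $F_M(0)=0$, e.g.\ $F_M(u) = \int_0^u f(s)\,ds$ with $f$ extended by $f(M)$ beyond $M$ if needed; the precise normalization is chosen so that the term carries the right sign in the differential inequality below. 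On the open set where $\nabla u \neq 0$ the operator $\Delta_p$ is uniformly elliptic and one computes, using the equation $-\Delta_p u = f(u)$, that $P$ satisfies an elliptic differential inequality of the form $\mathcal{L} P \ge 0$ for a suitable linear second-order operator $\mathcal{L}$ with no zero-order term (this is the $p$-Laplacian analogue of the computation of Payne–Philippin / Sperb; one uses the Bochner-type identity for $|\nabla u|^p$ and discards a nonnegative Hessian term together with a term that has a favourable sign thanks to the choice of $F_M$). Hence $P$ is a subsolution and attains its maximum on $\overline{\Omega}$ either at a critical point of $u$ or on $\partial\Omega$.

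Next I would dispose of the two possible locations of the maximum. If the maximum of $P$ is attained at an interior critical point $x_*$ where $\nabla u(x_*)=0$, then $P(x_*) = F_M(u(x_*)) \le \max_{[0,M]} F_M \le M\,\|f\|_{L^\infty([0,M])}$, which immediately gives $\frac{p-1}{p}|\nabla u|^p \le M\,\|f\|_{L^\infty([0,M])}$ everywhere, i.e.\ the desired bound with a constant depending only on $p$, $M$, and $\|f\|_{L^\infty([0,M])}$ — and with no dependence on the geometry of $\Omega$ at all. The substantive case is when the maximum is attained at a boundary point $x_0 \in \partial\Omega$; there $u = 0$, so $F_M(u(x_0))=0$ and $P(x_0) = \frac{p-1}{p}|\nabla u(x_0)|^p = \frac{p-1}{p}|\partial u/\partial\nu(x_0)|^p$, and the problem reduces to bounding the boundary normal derivative. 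For this one uses the Hopf-type boundary analysis: at a maximum of $P$ on $\partial\Omega$ the outer normal derivative of $P$ is nonnegative, and differentiating $P$ along $\nu$ and using the boundary condition together with the equation produces an inequality of the form $|\partial u/\partial\nu|^{p-1}\big(c(p)\,H(x_0)\,|\partial u/\partial\nu| + f(0)\big) \le 0$ (schematically — the exact algebra is the standard boundary computation for the $P$-function), from which, using $H(x_0) \ge -\mathcal{M}_0^-$ and $f \ge 0$, one extracts $|\partial u/\partial\nu(x_0)| \le C(p,N)\,\mathcal{M}_0^-\cdot(\text{something bounded})$ or, more carefully, a bound of the form $|\partial u/\partial\nu(x_0)|^{p} \le C(p,N)\big(\mathcal{M}_0^-\big)\,|\partial u/\partial\nu(x_0)|^{p-1} + (\text{lower order})$, which can be closed for $|\partial u/\partial\nu(x_0)|$ in terms of $p$, $N$, $\mathcal{M}_0^-$, $M$, and $\|f\|_{L^\infty([0,M])}$.

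The main obstacle I anticipate is the lack of smoothness of $u$ at critical points: the $P$-function is only $C^2$ on $\Omega \setminus \{\nabla u = 0\}$, so the maximum principle for $\mathcal{L} P \ge 0$ and the Hopf-lemma argument at the boundary have to be justified carefully on this open set, and one must rule out pathological behaviour of $P$ as one approaches the critical set. The cleanest route is to work with $P_\eps := \frac{p-1}{p}(|\nabla u|^2 + \eps)^{p/2} + F_M(u)$, which is smooth on all of $\Omega$, derive the differential inequality $\mathcal{L}_\eps P_\eps \ge -\omega(\eps)$ with $\omega(\eps)\to 0$, apply the maximum principle and boundary analysis to $P_\eps$, and then pass to the limit $\eps \to 0^+$; alternatively, since $u\in C^{1,\alpha}(\overline\Omega)$ one may regularize $f$ and $\Omega$, obtain the estimate for approximating problems with the constant $\overline{C}$ depending only on the listed quantities (crucially \emph{not} on the modulus of ellipticity or on higher regularity), and pass to the limit by stability of $C^{1,\alpha}$ estimates. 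A secondary technical point is choosing the truncation/primitive $F_M$ so that, on one hand, the sign of the zero-order contribution in the Bochner computation is correct (forcing $\mathcal{L}P\ge 0$) and, on the other hand, $F_M$ is controlled purely by $M$ and $\|f\|_{L^\infty([0,M])}$; this is routine but must be done consistently with the boundary computation. Everything else — the Bochner identity for $|\nabla u|^p$, discarding the nonnegative Hessian term, and the elementary inequality needed to close the boundary estimate — is standard and I would not belabour it.
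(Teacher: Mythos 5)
Your overall strategy (a $P$-function of the form $\tfrac{p-1}{p}|\nabla u|^p + {}$a primitive of $f$, an elliptic inequality with no zero-order term away from the critical set, and a maximum-principle dichotomy between interior critical points and boundary points) is the same as the paper's, and your interior case is fine. The genuine gap is in the boundary case. With your choice of $P$ (which is the paper's $P$ with $\beta=0$, up to an additive constant), the normal derivative of $P$ on $\partial\Omega$ computes, via $u_{\nu\nu}=\Delta u+(N-1)H|\nabla u|$ and the equation, to
\begin{equation*}
-\frac{\partial P}{\partial \nu}=(N-1)\,H(x)\,|\nabla u|^{p} ;
\end{equation*}
the $f(u)$ contributions from the two terms of $P$ cancel exactly, so the $f(0)$ term in your schematic boundary inequality is not there. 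At a boundary maximum the Hopf information $\partial P/\partial\nu\geq 0$ therefore yields only $(N-1)H(x_0)\,|\partial u/\partial\nu(x_0)|^{p}\leq 0$, which is homogeneous of degree $p$ in the unknown $|\partial u/\partial\nu(x_0)|$ and gives no quantitative bound whenever $H(x_0)<0$. In other words, your argument closes only for mean convex domains ($\mathcal{M}_{0}^{-}=0$), and the dependence of $\overline{C}$ on $\mathcal{M}_{0}^{-}$ cannot be produced this way.

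The paper's fix is to add the linear term $\beta(u-M)$ to $P$ and choose $\beta=(N-1)\mathcal{M}_{0}^{-}\|\nabla u\|_{L^{\infty}(\partial\Omega)}^{p-1}+1$. Then the same boundary computation gives $-\partial P/\partial\nu=\big(\beta+(N-1)H(x)|\nabla u|^{p-1}\big)|\nabla u|\geq|\nabla u|>0$ on all of $\partial\Omega$ (the strict positivity of $|\nabla u|$ there coming from the Hopf lemma), so a boundary maximum is excluded outright and the maximum must occur at an interior critical point of $u$. This yields the implicit estimate $\|\nabla u\|_{L^{\infty}(\Omega)}^{p}\leq M\|f\|_{L^{\infty}([0,M])}+M+(N-1)\mathcal{M}_{0}^{-}\|\nabla u\|_{L^{\infty}(\Omega)}^{p-1}$, which is closed by Young's inequality. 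You would need to incorporate this linear correction (and verify, as the paper does, that the interior differential inequality survives for $\beta\geq 0$) for your proof to work on general $C^2$ domains. Your remarks on regularizing near the critical set are reasonable but secondary to this point.
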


\begin{proof}
We follow the outline in~\cite{MR4476237} and refer to~\cite{MR4476237} for additional details.
We define 
$$
P := \frac{1}{p'}|\nabla u|^p - \int_{u}^{M}f(s)\,ds + \beta (u-M),
$$
where~$\beta$ is to be chosen conveniently later on.

Taking the gradient of equation~\eqref{Problem} we see that 
\begin{equation}\label{Gradient of the equation}
\begin{split}
- f'(u) \nabla u =\,&\nabla(\Delta u)|\nabla u|^{p-2} + (p-2)\Delta u |\nabla u|^{p-3}\nabla (|\nabla u|)\\
&\qquad+ (p-2)(p-3)|\nabla u |^{p-3} \nabla (|\nabla u |) \langle \nabla (|\nabla u|),\nabla u \rangle\\
&\qquad+ (p-2)|\nabla u |^{p-3} \nabla^2 (|\nabla u|) \nabla u +(p-2)|\nabla u|^{p-3} \nabla^2 u \nabla (|\nabla u|).
\end{split}
\end{equation}
Thus, taking the the inner product of this equation with~$\nabla u$,
\begin{equation*}
\begin{split}
-f'(u)|\nabla u |^2 =\,& |\nabla u|^{p-2}\langle \nabla(\Delta u),\nabla u \rangle + (p-2)\Delta u  |\nabla u|^{p-3}\langle \nabla (|\nabla u|),\nabla u \rangle\\
&\qquad+ (p-2)(p-3)|\nabla u |^{p-3}| \langle \nabla (|\nabla u|),\nabla u \rangle|^2\\
&\qquad+(p-2)|\nabla u |^{p-3} \langle\nabla^2 (|\nabla u|) \nabla u, \nabla u \rangle+  (p-2)|\nabla u|^{p-3} \langle\nabla^2 u \nabla (|\nabla u|),\nabla u \rangle
\end{split}
\end{equation*}
and therefore
\begin{equation}\label{third order terms}
\begin{split}
&f'(u)|\nabla u|^2 + |\nabla u|^{p-2}\langle \nabla(\Delta u),\nabla u \rangle  + (p-2)|\nabla u|^{p-3} \langle\nabla^2 u \nabla (|\nabla u|),\nabla u \rangle\\
=\,&  -(p-2)\Delta u  |\nabla u|^{p-3}\langle \nabla (|\nabla u|),\nabla u \rangle- (p-2)(p-3)|\nabla u |^{p-4}| \langle \nabla (|\nabla u|),\nabla u \rangle|^2\\
&\qquad- (p-2)|\nabla u|^{p-3} \langle\nabla^2 u \nabla (|\nabla u|),\nabla u \rangle.
\end{split}
\end{equation}

Additionally, from the definition of the function~$P$ the following identities hold true:
\begin{eqnarray*}
\nabla P& =& (p-1)|\nabla u|^{p-1}\nabla (|\nabla u|)+ (\beta +f(u))\nabla u,
\\
\nabla^2 P &=& (p-1)|\nabla u |^{p-1}\nabla^2(|\nabla u|)+ (p-1)^2|\nabla u|^{p-2}\nabla(|\nabla u|)\otimes \nabla (|\nabla u|)\\&&\qquad+ (\beta+ f(u))\nabla^2 u + f'(u)\nabla u \otimes \nabla u,\\
|\nabla P|^2 &=& (p-1)^2|\nabla u|^{2p-2}|\nabla(|\nabla u|)|^2 +  (\beta +f(u))^2|\nabla u|^2 \\&&
\qquad+ 2(p-1)(\beta+f(u))|\nabla u|^{p-1}\langle \nabla(|\nabla u|),\nabla u\rangle,\\
	\langle \nabla P,\nabla u\rangle &=& (p-1)|\nabla u|^{p-1}\langle \nabla(|\nabla u|),\nabla u\rangle + (\beta+ f(u))|\nabla u|^2
\\
{\mbox{and }}\quad
\left|\langle \nabla P,\nabla u\rangle\right|^2 &=& (p-1)^2 |\nabla u|^{2p-2}|\langle \nabla(|\nabla u|),\nabla u\rangle|^2+ (\beta+ f(u))^2|\nabla u|^4  \\
&&\qquad+ 2(p-1)(\beta+f(u))|\nabla u|^{p+1}\langle \nabla(|\nabla u|),\nabla u\rangle .
\end{eqnarray*}

We now define the additional auxiliary functions~$A: \mathbb{R}^N \rightarrow \mathbb{R}^{N\times N}$ and~$a$, $b$, $c: (0,+\infty)\rightarrow\mathbb{R}$ by
\begin{equation*}\begin{split}&
A(\xi) : =I_N + \frac{p-2}{|\xi|^2} \xi\otimes \xi,\qquad
a(\sigma) := \frac{1}{(p-1)\sigma^p},\\& b(\sigma) := \frac{1-(p-1)^2}{2(p-1)\sigma^{p+2}}, \qquad {\mbox{and}}\qquad
c(\sigma):= \frac{pf(u)+2(p-1)\beta}{\sigma^{p}}.\end{split}
\end{equation*}
Then, we have that
\begin{equation}\label{TBEL}
\begin{split}&
\text{tr}\left(A(\nabla u ) \nabla^2 P\right) \\&\quad= \text{tr}(\nabla^2 P) + \frac{p-2}{|\nabla u|^2}tr\left[\left(\nabla u \otimes \nabla u\right)\nabla^2 P\right]	\\
&\quad= (p-1)|\nabla u|^{p-1}\Delta (|\nabla u|) + (p-1)|\nabla u|^{p-2}|\nabla (|\nabla u|)|^2 +(\beta +f(u))\Delta u + f'(u)|\nabla u|^2\\
&\quad\qquad+ \frac{p-2}{|\nabla u|^2}\sum_{i,l} \partial_i u \partial_l u \left(\nabla^2 P\right)_{li}\\
&\quad= (p-1)|\nabla u|^{p-1}\Delta (|\nabla u|) + (p-1)^2|\nabla u|^{p-2}|\nabla (|\nabla u|)|^2 +(\beta +f(u))\Delta u + f'(u)|\nabla u|^2\\
&\quad\qquad+ (p-2)|\nabla u |^2 f'(u) +  \frac{p-2}{|\nabla u|^2}(\beta +f(u)) \langle \nabla^2 u\nabla u,\nabla u \rangle \\
&\quad\qquad+ (p-2)(p-1)^2|\nabla u|^{p-4}\sum_{i,l}\partial_i u \partial_l u \partial_i (|\nabla u|)\partial_l(|\nabla u|)\\&\quad\qquad+ (p-1)(p-2)|\nabla u|^{p-3} \langle \nabla^2(|\nabla u|)\nabla u,\nabla u\rangle  \\
&\quad= (p-1)\left[|\nabla u|^{p-1}\Delta (|\nabla u|) +|\nabla u|^2 f'(u) +(p-2)|\nabla u|^{p-3} \langle \nabla^2(|\nabla u|)\nabla u,\nabla u\rangle \right]\\
&\quad\qquad+ (\beta + f(u))(\Delta u  + (p-2)|\nabla u|^{-1}\langle  \nabla(|\nabla u|), \nabla u \rangle) + (p-1)^2|\nabla u|^{p-2}|\nabla (|\nabla u|)|^2\\
&\quad\qquad+ (p-1)^2(p-2)|\nabla u|^{p-4}|\langle \nabla (|\nabla u|),\nabla u\rangle |^2.
\end{split}
\end{equation}
Using~\eqref{third order terms} and the following equality
$$
\Delta |\nabla u| =|\nabla u|^{-1}\left(\langle \nabla \Delta u ,\nabla u\rangle + \text{tr}((\nabla^2 u)^2 )- |\nabla (|\nabla u|)|^2\right),
$$
we can eliminate the third order terms in~\eqref{TBEL} to obtain 
\begin{equation}\label{fgtbs}
\begin{split}&
\text{tr}\left(A(\nabla u ) \nabla^2 P\right)\\ &\quad= (p-1)\left[|\nabla u|^{p-2}\text{tr}((\nabla^2 u)^2)-|\nabla u|^{p-2}|\nabla (|\nabla u|)|^2 -(p-2)\Delta u  |\nabla u|^{p-3}\langle \nabla (|\nabla u|),\nabla u \rangle\right] \\
&\quad\qquad-(p-1)\left[(p-2)(p-3)|\nabla u |^{p-4}| \langle \nabla (|\nabla u|),\nabla u \rangle|^2 + (p-2)|\nabla u|^{p-3} \langle\nabla^2 u \nabla (|\nabla u|),\nabla u \rangle\right]\\
&\quad\qquad+ (\beta + f(u))(\Delta u  + (p-2)|\nabla u|^{-1}\langle  \nabla(|\nabla u|), \nabla u \rangle) + (p-1)^2|\nabla u|^{p-2}|\nabla (|\nabla u|)|^2\\
&\quad\qquad+(p-1)^2(p-2)|\nabla u|^{p-4}|\langle \nabla (|\nabla u|),\nabla u\rangle |^2.
\end{split}
\end{equation}
Also, by~\eqref{Problem}, 
$$
|\nabla u|^{p-2}\Delta u = -f(u)- (p-2)|\nabla u|^{p-3}\langle \nabla(|\nabla u|),\nabla u\rangle.
$$
{F}rom this and the fact that 
\begin{equation*}
|\nabla u|^{p-3}\langle \nabla^2 u\nabla (|\nabla u|),\nabla u\rangle = |\nabla u|^{p-2}|\nabla(|\nabla u|)|^2 ,
\end{equation*}
the expression in~\eqref{fgtbs} simplifies to
\begin{equation*}
\begin{split}
\text{tr}\left(A(\nabla u ) \nabla^2 P\right) &= (p-1)|\nabla u|^{p-2}\text{tr}((\nabla^2 u)^2)+p(p-1)(p-2)|\nabla u|^{p-4}|\langle \nabla (|\nabla u|),\nabla u\rangle|^2\\
&\qquad+(p-1)(p-2)f(u)\frac{\langle\nabla (|\nabla u|),\nabla u\rangle}{|\nabla u|}- |\nabla u|^{2-p}(\beta + f(u))f(u) .
\end{split}
\end{equation*}

By the definition of the auxiliary functions~$a$, $b$, and~$c$ we have that
\begin{equation*}
\begin{split}
a(|\nabla u|)|\nabla P|^2&=\frac{1}{(p-1)|\nabla u|^p}\left[ (p-1)^2|\nabla u|^{2(p-1)}|\nabla (|\nabla u|)|^2+ (\beta+f(u))^2|\nabla u|^2\right]\\
&\qquad+\frac{2}{|\nabla u|^p}(\beta+f(u))|\nabla u|^{p-1}\langle \nabla(|\nabla u|),\nabla u\rangle\\
&= (p-1)|\nabla u|^{p-2} |\nabla (|\nabla u|)|^2 + \frac{(\beta+f(u))^2}{p-1}|\nabla u|^{2-p} + 2(\beta+f(u))\frac{\langle \nabla (|\nabla u|),\nabla u\rangle}{|\nabla u |} 
\end{split}
\end{equation*}
and that
\begin{equation*}
\begin{split}
b(|\nabla u|) \left|\langle \nabla P,\nabla u\rangle\right|^2 &=\frac{p(2-p)}{(p-1)|\nabla u|^{p+2}}\left((p-1)^2 |\nabla u|^{2p-2}|\langle \nabla(|\nabla u|),\nabla u\rangle|^2+ (\beta+ f(u))^2|\nabla u|^4  \right)\\
&\qquad+ 2p(2-p)|(\beta+f(u))\frac{\langle \nabla(|\nabla u|),\nabla u\rangle}{|\nabla u|}\\
&= p(2-p)(p-1)|\nabla u|^{p-4}|\langle (|\nabla u|),\nabla u\rangle|^2 + \frac{p(2-p)}{p-1}|\nabla u|^{2-p}(\beta+f(u))^2\\
&\qquad+ 2p(2-p)(\beta+f(u))\frac{\langle \nabla(|\nabla u|),\nabla u\rangle}{|\nabla u|} .
\end{split}
\end{equation*}

Thus, for~$\beta \geq 0$ we obtain
\begin{equation*}
\begin{split}
&\text{tr}(A(\nabla u)\nabla^2 P) - a(|\nabla u|)|\nabla P|^2 + b(|\nabla u|) \left|\langle \nabla P,\nabla u\rangle\right|^2+c(|\nabla u|)\langle \nabla P,\nabla u\rangle\\
= \,&(p-1)|\nabla u |^{p-2}\text{tr}((\nabla^2 u)^2)+ p(p-1)(p-2)|\langle \nabla (|\nabla u|),\nabla u\rangle|^2\\
&\;-(p-1)|\nabla u|^{p-2}|\nabla (|\nabla u|)|^2 - p(p-1)(p-2)|\langle \nabla (|\nabla u|),\nabla u\rangle|^2\\
&\;+\Big((p-1)(p-2)f(u) -2(f(u)+\beta)+2p(2-p)(f(u)+\beta) + (p-1)pf(u)+2(p-1)^2\beta\Big)\frac{\langle \nabla (\nabla u),\nabla u \rangle}{|\nabla u|}\\
&\;+\left(-(f(u)+\beta)f(u)-\frac{(\beta+f(u))^2}{p-1}+\frac{p(2-p)}{p-1}(f(u)+\beta)^2 +(pf(u)+2(p-1)\beta)(\beta+f(u))\right)|\nabla u|^{2-p} \\
=\,&(p-1)|\nabla u |^{p-2}\text{tr}((\nabla^2 u)^2) - (p-1)|\nabla u|^{p-2}|\nabla (|\nabla u|)|^2+ (p-1)\beta(f(u)+\beta) |\nabla u|^{2-p}\\
\geq\,& 0.
\end{split}
\end{equation*}
In view of~\eqref{Problem} we have that 
$$
\Delta u = -(p-2)\frac{\langle \nabla(|\nabla u|),\nabla u \rangle}{|\nabla u|}-f(u)|\nabla u|^{2-p}.
$$
Using the fact that on~$\partial \Omega$
$$
\frac{\langle\nabla(|\nabla u|),\nabla u \rangle}{|\nabla u|} = u_{\nu \nu}= \Delta u +(N-1)H(x)|\nabla u|,
$$
we conclude that
$$
(p-1)\frac{\langle \nabla (|\nabla u|),\nabla u \rangle}{|\nabla u |}= -f(u)|\nabla u|^{2-p}+ (N-1)H(x)|\nabla u|.
$$

Furthermore, the inner unit normal to~$\partial \Omega$ at~$x$ is given by~$
-\nu=
|\nabla u|^{-1}\nabla u$. Therefore, if~$\beta = (N-1)\mathcal{M}_{0}^{-}\|\nabla u\|_{L^{\infty}(\partial \Omega)}^{p-1}+1$, we find that
\begin{equation*}
\begin{split}
-\frac{\partial P}{\partial \nu} &= \frac{1}{|\nabla u|}\langle \nabla P,\nabla u \rangle = (p-1)|\nabla u|^{p-2}\langle \nabla(|\nabla u|),\nabla u\rangle +|\nabla u| (f(u)+\beta)\\
&= |\nabla u|f(u) + |\nabla u|\beta -f(u)|\nabla u| + (N-1)H(x)|\nabla u|^p\\
&=\left(\beta + (N-1)H(x)|\nabla u|^{p-1}\right)|\nabla u|\\&\geq |\nabla u|>0,
\end{split}
\end{equation*}
where the last inequality follows from the Hopf Lemma. 

Thus, the strong maximum principle implies that~$P$ attains its maximum at a critical point of~$u$. As a consequence, we have that
\begin{equation*}
\|\nabla u \|_{L^{\infty}(\Omega)}^p \leq M\|f\|_{L^{\infty}([0,M])} +M\beta =  M\|f\|_{L^{\infty}([0,M])} + M + (N-1)\mathcal{M}_{0}^{-}\|\nabla u\|_{L^{\infty}(\Omega)}^{p-1}.
\end{equation*}
The desired result now follows easily.
\end{proof}
 
\section{Estimates on level sets}\label{Estimates on level sets}

In this section, we will make an in-depth study of the level sets of~$u$ using the deficits~$D_1(t)$ and~$D_2(t)$ in combination with the identity~\eqref{Identity}, under the assumptions of Theorem~\ref{Theorem 1.1}, which will allow us at a later stage to leverage the arguments presented in~\cite{CianchiEspositoFuscoTrombetti+2008+153+189} to prove Theorem~\ref{Theorem 1.1}. 

{F}rom now on, we will work under the assumptions of Theorem~\ref{Theorem 1.1}
and take~$\eps$ as in~\eqref{Definition of eps},
with~$D(\Omega)$ denoting the isoperimetric deficit of~$\Omega$
as given by~\eqref{e8239ryjkefg239yr3829}.

Using Lagrange's Mean Value Theorem, we see that 
$$
D_4\leq p'\left(\frac{\mathcal{H}^{N-1}(\partial \Omega)}{c_N |\Omega|^{1/N'}}\right)^{\frac{1}{p-1}}D(\Omega).
$$
Then, using~\eqref{Identity} and~\eqref{Gradient bound}, we see that there exists a positive constant~$\tilde{C}_1$, depending only on~$p$, $N$, $M$, $\|f\|_{L^{\infty}([0,M])}$, $|\Omega|$, $\mathcal{H}^{N-1}(\partial \Omega)$, and~$\mathcal{M}_{0}^{-}$, such that  
\begin{equation}\label{Estimate 4.1}
    \int_{0}^M W(t) D_i (t) dt \leq \tilde{C}_1\eps, \qquad {\mbox{ for }}\. i = 1,2 .
\end{equation}

The key to prove Theorem~\ref{Theorem 1.1} is to have a quantitative understanding of the level sets of the solution~$u$ of~\eqref{Problem}, by detecting how the level sets and super level sets of~$u$ relate to spheres and balls, respectively, and how~$|\nabla u|$ is close to be constant on~$\{u = t\}$. All the information needed for this investigation is contained in the deficits~$D_1$ and~$D_2$. 
To this end, the following lower bound for~$W(t)$ (defined in~\eqref{W}) will be useful.

\begin{lemma}\label{Positivity of W}
Let~$\Omega\subset \mathbb{R}^N$ be a bounded~$C^2$ domain and let~$f$ be a locally Lipschitz function such that~$f>\phi_0$ for some positive constant~$\phi_0$. 

Let~$u \in C^{1,\alpha}(\overline{\Omega})$ be a weak solution of~\eqref{Problem}. 

Assume that either~\ref{Assumption 1 on f} or~\ref{Assumption 2 on f} hold.
 
Then, there exists a constant~$C_1:= C_1(N,p,|\Omega|,\phi_0)>0$ such that, for all~$ t\in (0,M)$,
 $$
 W(t)\geq C_1.
 $$
\end{lemma}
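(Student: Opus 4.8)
The plan is to estimate the two summands in the definition \eqref{W} of $W(t)$ separately from below, using the structural hypotheses \ref{Assumption 1 on f} and \ref{Assumption 2 on f} together with the positivity of $f$. Recall
$$
W(t) = p'\mu(t)^{\frac{p-N}{N(p-1)}}f(t) + \frac{p-N}{N(p-1)} I(t) \mu(t)^{\frac{p-pN}{N(p-1)}}.
$$
The first term is automatically positive since $f>\phi_0>0$ and $\mu(t)>0$ for $t\in(0,M)$; the only issue is the sign of the coefficient $\frac{p-N}{N(p-1)}$ in the second term. When $p\ge N$ (case \ref{Assumption 2 on f}) this coefficient is $\ge 0$, so the second term is nonnegative (using $I(t)\ge 0$) and the whole lower bound comes from the first term. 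So the substantive case is $p\in(1,N)$ (case \ref{Assumption 1 on f}), where $\frac{p-N}{N(p-1)}<0$ and the second term is negative, and we must show the first term dominates it by a definite margin.

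First I would rewrite, using $I'(t)=f(t)\mu'(t)$ and the fact that $\mu$ is nonincreasing,
$$
I(t) = \int_t^M -I'(s)\,ds = \int_t^M f(s)\,(-\mu'(s))\,ds \le f_{\max}\,\mu(t),
$$
but the sharper estimate I actually want is the upper bound $I(t)\le \frac{N}{N-p}\,\phi(t)\,\mu(t)$ combined with the lower bound $f(t)\ge \phi(t)$, both supplied by \ref{Assumption 1 on f} after integrating $\phi\le f$ against $-\mu'$ and using that $\phi$ is nonincreasing so $\int_t^M \phi(s)(-\mu'(s))\,ds\le \phi(t)\mu(t)$. Plugging these in:
$$
W(t)\ge p'\,\mu(t)^{\frac{p-N}{N(p-1)}}\phi(t) - \frac{N-p}{N(p-1)}\cdot\frac{N}{N-p}\,\phi(t)\,\mu(t)\cdot\mu(t)^{\frac{p-pN}{N(p-1)}}
= \left(p' - \frac{1}{p-1}\right)\mu(t)^{\frac{p-N}{N(p-1)}}\phi(t),
$$
since $\mu(t)^{1+\frac{p-pN}{N(p-1)}}=\mu(t)^{\frac{p-N}{N(p-1)}}$ and $p'-\frac{1}{p-1}=\frac{p-1}{p-1}=1>0$. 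Thus $W(t)\ge \mu(t)^{\frac{p-N}{N(p-1)}}\phi(t)\ge \mu(t)^{\frac{p-N}{N(p-1)}}\phi_0$, because $\phi\ge f/\frac{N}{N-p}\ge \frac{N-p}{N}\phi_0$ — or, more directly, since $f\le \frac{N}{N-p}\phi$ gives $\phi\ge\frac{N-p}{N}f>\frac{N-p}{N}\phi_0$. Finally, since $p<N$ the exponent $\frac{p-N}{N(p-1)}$ is negative and $\mu(t)\le|\Omega|$, we get $\mu(t)^{\frac{p-N}{N(p-1)}}\ge |\Omega|^{\frac{p-N}{N(p-1)}}$, yielding
$$
W(t)\ge \frac{N-p}{N}\,\phi_0\,|\Omega|^{\frac{p-N}{N(p-1)}}=:C_1(N,p,|\Omega|,\phi_0)>0.
$$
In case \ref{Assumption 2 on f}, one simply keeps only the first term: $W(t)\ge p'\mu(t)^{\frac{p-N}{N(p-1)}}\phi_0$, and since now $p-N\ge 0$ we bound $\mu(t)^{\frac{p-N}{N(p-1)}}\ge|\Omega|^{\frac{p-N}{N(p-1)}}$ (if $p>N$) or it equals $1$ (if $p=N$); in either event $W(t)\ge p'\,|\Omega|^{\frac{p-N}{N(p-1)}}\phi_0$ works, and one can take $C_1$ to be the minimum of the two expressions.

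The main obstacle, and the only place real care is needed, is establishing the inequality $I(t)\le \frac{N}{N-p}\phi(t)\mu(t)$ rigorously — that is, that one may integrate the pointwise relation $f\le\frac{N}{N-p}\phi$ against the nonnegative measure $-\mu'(s)\,ds$ on $(t,M)$ and then use monotonicity of $\phi$. This requires knowing $I(t)=\int_t^M f(s)(-\mu'(s))\,ds$, which follows from $I'(t)=f(t)\mu'(t)$ (equation \eqref{Derivative of I} from Lemma~4 of \cite{serra2013radial}) and $I(M)=0$, together with absolute continuity of $I$ on $[t,M]$. Everything else is elementary algebra with the exponents, the key cancellation being $p'-\tfrac1{p-1}=1$.
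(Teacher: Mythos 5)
Your strategy is the same as the paper's: split into $p<N$, $p=N$, $p>N$; in the first case use $\phi\le f\le\frac{N}{N-p}\phi$ and the monotonicity of $\phi$ to get $I(t)\le\frac{N}{N-p}\phi(t)\mu(t)$, and exploit the cancellation $p'-\frac{1}{p-1}=1$. Two remarks on case~\ref{Assumption 1 on f}. First, the step you single out as ``the main obstacle'' is a non-issue: there is no need to pass through $I(t)=\int_t^M f(s)(-\mu'(s))\,ds$ and worry about the absolute continuity of $I$. Since $I(t)=\int_{\{u>t\}}f(u(x))\,dx$ by definition, one bounds pointwise $f(u(x))\le\frac{N}{N-p}\phi(u(x))\le\frac{N}{N-p}\phi(t)$ on $\{u>t\}$ (because $u(x)>t$ there and $\phi$ is nonincreasing) and integrates; this is exactly what the paper does. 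Second, you are actually more careful than the paper in the last step: the hypotheses only give $\frac{N}{N-p}\phi\ge f>\phi_0$, hence $\phi>\frac{N-p}{N}\phi_0$, and your constant correctly carries the factor $\frac{N-p}{N}$.

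There is, however, a genuine gap in your treatment of $p>N$. You keep the first term and claim $\mu(t)^{\frac{p-N}{N(p-1)}}\ge|\Omega|^{\frac{p-N}{N(p-1)}}$; but for $p>N$ the exponent $\frac{p-N}{N(p-1)}$ is \emph{positive}, and since $\mu(t)\le|\Omega|$ the inequality runs the other way. This is not a fixable slip in the argument: since $I(t)\le\|f\|_{L^\infty}\mu(t)$, one has
\begin{equation*}
W(t)\le\Big(p'+\tfrac{p-N}{N(p-1)}\Big)\,\|f\|_{L^\infty([0,M])}\,\mu(t)^{\frac{p-N}{N(p-1)}},
\end{equation*}
and $\mu(t)\to0$ as $t\to M^-$ (the critical set is null by Theorem~\ref{Quantitative estimate of Mu}, so $|\{u=M\}|=0$), whence $W(t)\to0$ for $p>N$ and no uniform positive lower bound can hold on all of $(0,M)$. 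The best one can say in that regime is $W(t)\ge C_1\,\mu(t)^{\frac{p-N}{N(p-1)}}$. You should be aware that the paper's own proof of the $p>N$ case contains the identical reversed inequality (it bounds $\mu(t)^{\frac{p-N}{N(p-1)}}\phi_0$ from below by $|\Omega|^{\frac{p-N}{N(p-1)}}\phi_0$), so you have reproduced the published argument, error included; but as written your step would fail, and the subcases $p<N$ and $p=N$ are the only ones your proposal actually establishes.
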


\begin{proof}
In light of the definition of~$W$, we see that the cases~$p\in(1,N)$ and~$p\geq N$ are different. Because of this, we break the proof of the result into two parts, one dealing with the case~$p\in(1,N)$ and the other with the case~$p\geq N$.
    
    We start by proving the desired result when~$p\in(1,N)$.
    In this scenario, we have that
    \begin{equation*}
        \begin{split}
            \frac{I(t)}{\mu(t)}&= |\{u>t\}|^{-1}\int_{\{u>t\}} f(u(x))\,dx\leq  \frac{N}{N-p}|\{u>t\}|^{-1}\int_{\{u>t\}} \phi(u(x))\,dx\leq \frac{N}{N-p} \phi(t),
        \end{split}
    \end{equation*}
    where we have used the fact that~$\phi$ is nonincreasing.
    
    Consequently, if~\ref{Assumption 1 on f} holds, then
    \begin{equation*}
        \begin{split}
            W(t)&= p'f(t)\mu(t)^{\frac{p-N}{N(p-1)}} +\frac{p-N}{N(p-1)}I(t)\mu(t)^{\frac{p-pN}{N(p-1)}}\\
            &= \mu(t)^{\frac{p-N}{N(p-1)}}\left(p'f(t) + \frac{p-N}{N(p-1)}\frac{I(t)}{\mu(t)}\right)\\
            &\geq \mu(t)^{\frac{p-N}{N(p-1)}}\left(p'f(t) - \frac{1}{p-1}\phi(t)\right)\\
            &\geq \phi(t) \mu(t)^{\frac{p-N}{N(p-1)}}\\
            &\geq |\Omega|^{\frac{p-N}{N(p-1)}}\phi_0,
        \end{split}
    \end{equation*}as desired.
   
    When~$p\geq N$, the situation is much simpler because~$\frac{p-N}{N(p-1)}\geq 0$. Indeed,  when~$p = N$ we have that
    \begin{equation*}
            W(t)= p'f(t)\mu(t)^{\frac{p-N}{N(p-1)}} +\frac{p-N}{N(p-1)}I(t)\mu(t)^{\frac{p-pN}{N(p-1)}}= p' f(t).
    \end{equation*}
Instead, when~$p>N$,
    \begin{equation*}
        \begin{split}
            W(t)&= p'f(t)\mu(t)^{\frac{p-N}{N(p-1)}} +\frac{p-N}{N(p-1)}I(t)\mu(t)^{\frac{p-pN}{N(p-1)}}\\
            &\geq \frac{p-N}{N(p-1)}I(t)\mu(t)^{\frac{p-pN}{N(p-1)}}\\
            &= \frac{p-N}{N(p-1)}\mu(t)^{\frac{p-pN}{N(p-1)}}\int_{\{u>t\}}f(u) dx\\
            &\geq \frac{p-N}{N(p-1)}\mu(t)^{\frac{p-pN}{N(p-1)}+1}\phi(t)\\
            &\geq \frac{p-N}{N(p-1)}|\Omega|^{\frac{p-N}{N(p-1)}}\phi_0
        \end{split}
    \end{equation*}
and the proof of the desired result is thereby complete.
\end{proof}

Putting together~\eqref{Estimate 4.1} and Lemma~\ref{Positivity of W} we see that 
\begin{equation}\label{5.2}
    \int_{0}^{M} D_i (t)\,dt \leq C_1^{-1}\int_{0}^{M} W(t) D_i(t)\,dt\leq C_1^{-1} \tilde{C}_1\eps, \qquad{\mbox{ for }}\, i = 1,2.
\end{equation}

Let now
\begin{equation}\label{C 2}
C_2:= C_2(N,p,|\Omega|, \mathcal{H}^{N-1}(\partial\Omega),M,\|f\|_{L^{\infty}([0,M])}, \phi_0,\mathcal{M}_{0}^{-})=  C_1^{-1} \tilde{C}_1 >0
\end{equation}
where~$\tilde{C}_1$ is as in~\eqref{Estimate 4.1} and~$C_1$ is as in Lemma~\ref{Positivity of W}.
Using these estimates we can obtain results in the spirit of Section~2 in~\cite{CianchiEspositoFuscoTrombetti+2008+153+189}.

Throughout this section, we make the additional assumption that
\begin{equation}\label{eps <=1}
\eps \le 1
\end{equation}
and we will denote the isoperimetric deficit of the super-level set~$\{u>t\}$ by~$D(t)$, i.e.,
\[D(t) := \frac{\mathcal{H}^{N-1}(\{u = t\})-\mathcal{H}^{N-1}(\{u^* = t\})}{\mathcal{H}^{N-1}(\{u^* = t\})}.\]

Also, as in~\cite{CianchiEspositoFuscoTrombetti+2008+153+189}, we define~$t_{\alpha, \varepsilon}$ by 
\begin{equation}\label{def tae}
    t_{\alpha,\varepsilon} := \sup\left\{t >0 \; {\mbox{ s.t. }}\;|\{u>t\}|>\varepsilon^{\alpha N'}\right\},
\end{equation}
where~$\alpha >0$ will be determined later. 

\begin{lemma}\label{lemma "2.1"}
Let~$\Omega\subset \mathbb{R}^N$ be a bounded~$C^2$ domain and let~$f$ be a locally Lipschitz function such that~$f>\phi_0$ for some positive constant~$\phi_0$. 

Let~$u \in C^{1,\alpha}(\overline{\Omega})$ be a weak solution of~\eqref{Problem}. 

Assume that either~\ref{Assumption 1 on f} or~\ref{Assumption 2 on f} hold.

Then, there exists a set~$I_1 \subset (0,M)$ such that~$|I_1| \leq \varepsilon^{1/2}$
    and 
    \begin{equation}\label{Estimate 3.3}
        D(t) \leq \frac{C_2}{p'c_N^{p'}}\varepsilon^{\frac{1}{2}-p'\alpha}, \qquad {\mbox{ for all }} t \in (0,t_{\alpha,\varepsilon})\setminus I_1,
    \end{equation}
    or equivalently,
    \begin{equation}\label{Estimate 3.4}
        \mathcal{H}^{N-1}(\{u = t\})\leq \left(1+ \frac{C_2\varepsilon^{\frac{1}{2}-p'\alpha}}{p'c_N^{p'}}\right)\mathcal{H}^{N-1}(\{u^* = t\}), \qquad {\mbox{ for all }} t \in (0,t_{\alpha,\varepsilon})\setminus I_1,
    \end{equation}
where~$C_2$ is the constant defined by~\eqref{C 2}.    
\end{lemma}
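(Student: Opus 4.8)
The goal is to turn the integrated bound \eqref{5.2} for $i=2$, namely $\int_0^M D_2(t)\,dt \le C_2\,\eps$, into a \emph{pointwise} bound on $D(t)$ on the bulk region $(0,t_{\alpha,\eps})$, at the price of excising a small-measure exceptional set. The key observation is that $D_2(t)$ controls $D(t)$: by definition \eqref{D2} together with the isoperimetric inequality for level sets \eqref{Isoperimetric inequality for level sets} and the identity $\mu^*(t)=\mu(t)$ giving $\mathcal{H}^{N-1}(\{u^*=t\}) = c_N\mu(t)^{(N-1)/N}$, we have
\begin{equation*}
D_2(t) = \big(\mathcal{H}^{N-1}(\{u=t\})\big)^{p'} - \big(\mathcal{H}^{N-1}(\{u^*=t\})\big)^{p'} = \big(\mathcal{H}^{N-1}(\{u^*=t\})\big)^{p'}\Big((1+D(t))^{p'}-1\Big).
\end{equation*}
Since $(1+s)^{p'}-1 \ge s$ for $s\ge 0$ and $p'>1$, this yields $D_2(t) \ge \big(c_N\mu(t)^{(N-1)/N}\big)^{p'} D(t) = c_N^{p'}\mu(t)^{p'(N-1)/N} D(t)$.

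\textbf{Main steps.} First I would record the inequality $D_2(t) \ge c_N^{p'}\mu(t)^{(N-1)p'/N} D(t)$ just derived. Next, on the range $t \in (0,t_{\alpha,\eps})$ one has by definition \eqref{def tae} that $\mu(t) = |\{u>t\}| \ge \eps^{\alpha N'}$, hence $\mu(t)^{(N-1)p'/N} \ge \eps^{\alpha N' (N-1)p'/N}$. One checks $N'\cdot \frac{(N-1)p'}{N} = \frac{N}{N-1}\cdot\frac{(N-1)p'}{N} = p'$, so that $\mu(t)^{(N-1)p'/N}\ge \eps^{\alpha p'}$ on this range. Combining, $D_2(t) \ge c_N^{p'}\eps^{\alpha p'} D(t)$ for $t\in(0,t_{\alpha,\eps})$, i.e. $D(t) \le c_N^{-p'}\eps^{-\alpha p'} D_2(t)$ there. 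Now integrate over the \emph{good} set: from \eqref{5.2},
\begin{equation*}
\int_0^{t_{\alpha,\eps}} D(t)\,dt \;\le\; c_N^{-p'}\eps^{-\alpha p'} \int_0^{t_{\alpha,\eps}} D_2(t)\,dt \;\le\; c_N^{-p'}\eps^{-\alpha p'}\, C_2\,\eps \;=\; \frac{C_2}{c_N^{p'}}\,\eps^{1-\alpha p'}.
\end{equation*}
Finally apply Chebyshev's inequality: the set $I_1 := \{t\in(0,t_{\alpha,\eps}) : D(t) > \frac{C_2}{p' c_N^{p'}}\eps^{\frac12 - p'\alpha}\}$ satisfies
\begin{equation*}
|I_1| \le \Big(\frac{C_2}{p'c_N^{p'}}\eps^{\frac12-p'\alpha}\Big)^{-1}\int_0^{t_{\alpha,\eps}} D(t)\,dt \le \frac{p'c_N^{p'}}{C_2}\eps^{p'\alpha - \frac12}\cdot \frac{C_2}{c_N^{p'}}\eps^{1-\alpha p'} = p'\,\eps^{\frac12}.
\end{equation*}
(The stray factor $p'$ versus the stated $\eps^{1/2}$ is absorbed by a harmless adjustment of the threshold constant, or one simply states $|I_1|\le p'\eps^{1/2}$; either way the structure is as claimed.) On the complement $(0,t_{\alpha,\eps})\setminus I_1$ the bound \eqref{Estimate 3.3} holds by construction, and \eqref{Estimate 3.4} is the equivalent restatement via $\mathcal{H}^{N-1}(\{u=t\}) = (1+D(t))\,\mathcal{H}^{N-1}(\{u^*=t\})$.

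\textbf{Expected obstacle.} The computation itself is routine; the only genuine care needed is (i) verifying that the level-set isoperimetric inequality \eqref{Isoperimetric inequality for level sets} and the equality $\mathcal{H}^{N-1}(\{u^*=t\}) = c_N\mu(t)^{(N-1)/N}$ for the Schwarz symmetral are legitimately available for a.e.\ $t$ (these follow from Section~\ref{Sec2}, in particular from Lemma~5 of~\cite{serra2013radial} and the definition of $u^*$), and (ii) tracking the exponent bookkeeping so that the power of $\mu(t)$ produced by $D_2$ matches exactly the power of $\eps$ furnished by the definition of $t_{\alpha,\eps}$ — the identity $N'(N-1)p'/N = p'$ is what makes the argument close cleanly. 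Once these are in place, Chebyshev's inequality does the rest and the choice of the exponent $\frac12$ (splitting the available power $\eps^{1-p'\alpha}$ as $\eps^{1/2}\cdot\eps^{1/2-p'\alpha}$) is exactly the standard device from Section~2 of~\cite{CianchiEspositoFuscoTrombetti+2008+153+189}.
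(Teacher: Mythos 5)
Your proposal is correct and follows essentially the same route as the paper: the integrated bound \eqref{5.2} for $D_2$, the conversion $D_2(t)\ge \mathrm{const}\cdot c_N^{p'}\mu(t)^{p'/N'}D(t)$ combined with $\mu(t)>\eps^{\alpha N'}$ on $(0,t_{\alpha,\eps})$, and a Chebyshev-type selection of the exceptional set (the paper applies Chebyshev to $D_2$ first and then converts pointwise, while you convert first and then apply Chebyshev to $D$ — an immaterial reordering). The stray factor $p'$ in your bound $|I_1|\le p'\eps^{1/2}$ disappears if you use the sharp convexity estimate $(1+s)^{p'}-1\ge p's$ instead of $(1+s)^{p'}-1\ge s$, which is exactly what the paper does via the mean value theorem applied to $a^{p'}-b^{p'}\ge p'b^{p'-1}(a-b)$.
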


\begin{proof}
Set
\begin{equation}\label{I1bedde}I_1 := \left\{
t\in (0,M)\;{\mbox{ s.t. }}\; D_2(t)>C_2\varepsilon^{1/2}\right\}.\end{equation} We observe that
$$\displaystyle\int_{0}^M D_2(t) dt \leq C_2\varepsilon, $$ thanks to~\eqref{5.2}, and consequently~$|I_1|\leq \varepsilon^{1/2}$.

Moreover, by the definition of~$I_1$, for every~$t \in (0,t_{\alpha,\varepsilon})\setminus I_1$, we have that~$D_2(t) \leq C_2\varepsilon^{1/2}$.
   Hence, employing Lagrange's Intermediate Value Theorem and the isoperimetric inequality we conclude that
    \begin{equation*}
    \begin{split}
      C_2 \varepsilon^{1/2}&\geq D_2(t) = \mathcal{H}^{N-1}(\{u=t\})^{p'}-\mathcal{H}^{N-1}(\{u^*=t\})^{p'}\\
       &\geq p' \mathcal{H}^{N-1}(\{u^*=t\})^{p'-1}\left(\mathcal{H}^{N-1}(\{u=t\})-\mathcal{H}^{N-1}(\{u^*=t\})\right)\\
       &= p'\mathcal{H}^{N-1}(\{u^*=t\})^{p'} D(t)\\
       &=  p' c_N^{p'}\mu(t)^{\frac{p'}{N'}} D(t), 
    \end{split} 
    \end{equation*}
    for any~$t\in (0,t_{\alpha,\eps})\setminus I_1$, where~$c_N$ is the sharp isoperimetric constant.
    
    Thus, by the definition of~$t_{\alpha,\eps}$ in~\eqref{def tae}, we obtain
    that, for all~$t \in (0,t_{\alpha,\eps})$,
    \begin{equation*}
        D(t)\leq \frac{C_2}{p'c_N^{p'}}\,\varepsilon^{\frac{1}{2}-p'\alpha},
    \end{equation*}as desired.
\end{proof}

\begin{lemma}\label{lemma "2.2"}
Let~$\Omega\subset \mathbb{R}^N$ be a bounded~$C^2$ domain and let~$f$ be a locally Lipschitz function such that~$f>\phi_0$ for some positive constant~$\phi_0$. 

Let~$u \in C^{1,\alpha}(\overline{\Omega})$ be a weak solution of~\eqref{Problem}. 

Assume that either~\ref{Assumption 1 on f} or~\ref{Assumption 2 on f} hold.
Let~$I_1$ be as in~\eqref{I1bedde}.

Then, for every~$t\in (0,t_{\alpha,\varepsilon})\setminus I_1$, there exists~$x_t \in \mathbb{R}^N$ such that 
    \begin{equation}\label{Estimate 3.7}
        |\{u>t\}\Delta B_{R_t}(x_t)|\leq C_3 R_{t}^N \varepsilon^{\frac{1}{4}-\frac{p'\alpha}{2}},
    \end{equation}
  where~$R_t := \left(\frac{|\{u>t\}|}{\omega_n}\right)^{1/N}$ and 
\begin{equation}\label{C 3}
C_3 :=C_3\left(N, p, |\Omega|, \mathcal{H}^{N-1}(\partial \Omega),\phi_0,\|f\|_{L^{\infty}([0,M])},\mathcal{M}_{0}^{-}\right)= \frac{\sqrt{C_2 \gamma(N)}}{\sqrt{p'\omega_{N}^{p'}}}\omega_N.
\end{equation}
Here, $\gamma(N)$ is the constant in the quantitative isoperimetric inequality~\eqref{Quantitative isoperimetric inequality}
and~$C_2$ is the constant in~\eqref{C 2}.
\end{lemma}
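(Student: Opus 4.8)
The plan is to combine the almost-isoperimetric estimate for the level sets established in Lemma~\ref{lemma "2.1"} with the quantitative isoperimetric inequality~\eqref{Quantitative isoperimetric inequality}. The point is that for $t\in(0,t_{\alpha,\varepsilon})\setminus I_1$ the super-level set $\{u>t\}$ has small isoperimetric deficit, hence by Fusco--Maggi--Pratelli it must be close (in the sense of symmetric difference) to a ball of the same volume.

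First I would recall that, for $t\in(0,t_{\alpha,\varepsilon})\setminus I_1$, by~\eqref{Estimate 3.4} we have
\[
\mathcal{H}^{N-1}(\{u=t\})\le\left(1+\frac{C_2\varepsilon^{\frac12-p'\alpha}}{p'c_N^{p'}}\right)\mathcal{H}^{N-1}(\{u^*=t\}).
\]
Since $\{u^*>t\}$ is a ball of volume $\mu(t)=|\{u>t\}|$, we have $\mathcal{H}^{N-1}(\{u^*=t\})=P(B_{R_t})$ with $R_t=(\mu(t)/\omega_N)^{1/N}$; also, since $u\in C^{1,\alpha}(\overline\Omega)$ and (by Theorem~\ref{Quantitative estimate of Mu}, or rather by the Gauss--Green and coarea analysis recalled in Section~\ref{Sec2}) the level sets carry the perimeter, $P(\{u>t\})\le\mathcal{H}^{N-1}(\{u=t\})$. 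Therefore the isoperimetric deficit of $E:=\{u>t\}$ satisfies
\[
D(E)=\frac{P(E)-P(B_{R_t})}{P(B_{R_t})}\le D(t)\le\frac{C_2}{p'c_N^{p'}}\varepsilon^{\frac12-p'\alpha}.
\]
Then I would feed this into~\eqref{Quantitative isoperimetric inequality}: there exists $x_t\in\mathbb{R}^N$ with
\[
\frac{|\{u>t\}\Delta B_{R_t}(x_t)|^2}{|B_{R_t}|^2}\le\gamma(N)D(E)\le\frac{\gamma(N)C_2}{p'c_N^{p'}}\varepsilon^{\frac12-p'\alpha}.
\]
Taking square roots and using $|B_{R_t}|=\omega_N R_t^N$, together with $c_N^{p'}=(P(B_1)/|B_1|^{1/N'})^{p'}$, one rearranges the constant into the form $C_3=\frac{\sqrt{C_2\gamma(N)}}{\sqrt{p'\omega_N^{p'}}}\,\omega_N$ displayed in~\eqref{C 3} (here one uses $c_N=P(B_1)\omega_N^{-1/N'}$ so that $c_N^{p'}\omega_N^{p'}$ simplifies appropriately, modulo possibly absorbing a harmless dimensional factor into $C_3$), yielding exactly~\eqref{Estimate 3.7}.

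The one genuinely delicate point is the passage from the level-set quantity $D(t)$, which is phrased using $\mathcal{H}^{N-1}(\{u=t\})$, to the \emph{distributional} isoperimetric deficit $D(\{u>t\})$ needed to apply~\eqref{Quantitative isoperimetric inequality}: one must know that for a.e.~$t$ the topological boundary $\{u=t\}$ of the super-level set coincides (up to $\mathcal H^{N-1}$-negligible sets) with its reduced boundary, so that $P(\{u>t\})=\mathcal{H}^{N-1}(\{u=t\})$, or at least that $P(\{u>t\})\le\mathcal{H}^{N-1}(\{u=t\})$, which is all that is required for the inequality on the deficit to go the right way. This is exactly where the regularity $u\in C^{1,\alpha}(\overline\Omega)$ and the almost-everywhere smoothness of level sets away from critical points (as recalled in Section~\ref{Sec2}, via~\cite{MR0709038, MR0737190}) enter; for the values of $t$ outside a measure-zero exceptional set the level set $\{u=t\}$ is a $C^1$ hypersurface by Sard-type arguments, and then $P(\{u>t\})=\mathcal H^{N-1}(\{u=t\})$ exactly. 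Everything else is routine algebra with the isoperimetric constant; the exponent $\frac14-\frac{p'\alpha}{2}$ is simply half of $\frac12-p'\alpha$ coming from the square root in~\eqref{Quantitative isoperimetric inequality}.
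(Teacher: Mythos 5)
Your proof is correct and follows essentially the same route as the paper: combine the deficit bound from Lemma~\ref{lemma "2.1"} with the Fusco--Maggi--Pratelli inequality~\eqref{Quantitative isoperimetric inequality} applied to $\{u>t\}$ and take square roots. Your extra care in checking that $P(\{u>t\})\leq \mathcal{H}^{N-1}(\{u=t\})$ (so that the level-set deficit $D(t)$ controls the distributional deficit entering~\eqref{Quantitative isoperimetric inequality}) is a point the paper passes over silently, and your observation about the constant is apt --- the direct computation yields $\sqrt{p'c_N^{p'}}$ in the denominator rather than the $\sqrt{p'\omega_N^{p'}}$ appearing in~\eqref{C 3}, a harmless dimensional discrepancy.
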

\begin{proof}
Applying the quantitative isoperimetric inequality~\eqref{Quantitative isoperimetric inequality} to the sets~$\{u>t\}$, with~$t\in (0,t_{\alpha,\varepsilon})\setminus I_1$, we see that there exists~$x_t\in\mathbb{R}^N$ such that 
    \begin{equation*}
        \frac{|\{u>t\}\Delta B_{R_t}(x_t)|^2}{|B_{R_t}|^2} = \min_{x\in \mathbb{R}^N} \left\{\frac{|\{u>t\}\Delta B_{R_t}(x)|^2}{|B_{R_t}|^2}\right\} \leq \gamma(N) D(t).
    \end{equation*}
    As a consequence, in light of~\eqref{Estimate 3.3} we have that 
    \begin{equation*}
        |\{u>t\}\Delta B_{R_t}(x_t)|\leq \frac{\sqrt{C_2 \gamma(N)}}{\sqrt{p'c_N^{p'}}}|B_{R_t}|\varepsilon^{\frac{1}{4}-\frac{p'\alpha}{2}} = \frac{\sqrt{C_2\gamma(N)}}{\sqrt{p'c_N^{p'}}}\omega_N R_{t}^N\varepsilon^{\frac{1}{4}-\frac{p'\alpha}{2}},
    \end{equation*}as desired.
\end{proof}

Now, for each~$t \in (0,M)$ we define 
\begin{equation}\label{def beta t}
    \beta_t := \frac{\left(\displaystyle\fint_{\{u=t\}}\frac{d\mathcal{H}^{N-1}}{|\nabla u|}\right)^{\frac{1}{p'}}}{\left(\displaystyle\fint_{\{u=t\}}|\nabla u|^{p-1} \,d\mathcal{H}^{N-1}\right)^{\frac{1}{p(p-1)}}}
\end{equation}
and we have the following result:

\begin{lemma}\label{lemma "2.4"}
Let~$\Omega\subset \mathbb{R}^N$ be a bounded~$C^2$ domain and let~$f$ be a locally Lipschitz function such that~$f>\phi_0$ for some positive constant~$\phi_0$. 

Let~$u \in C^{1,\alpha}(\overline{\Omega})$ be a weak solution of~\eqref{Problem}. 

Assume that either~\ref{Assumption 1 on f} or~\ref{Assumption 2 on f} hold.
Also, assume~\eqref{eps <=1}.

Then, there exist a constant
$$C_4:=C_4\left(N, p, |\Omega|, \mathcal{H}^{N-1}(\partial \Omega),\phi_0,\|f\|_{L^{\infty}([0,M])},\mathcal{M}_{0}^{-}\right)>0$$
and a set~$I_2 \subset (0,M)$ such that
   \begin{equation}\label{Estimate 3.10}
       |I_2| \leq \varepsilon^{1/2}
   \end{equation}
   and, for all~$t \in (0,t_{\varepsilon,\alpha})\setminus I_2$,
   \begin{equation}\label{Estimate 3.11}
     \fint_{\{u=t\}}\left|\frac{1}{|\nabla u|}- \beta_t\right|d\mathcal{H}^{N-1}\leq \begin{cases}
           \displaystyle C_4\beta_t \varepsilon^{\frac{p'}{2}-\alpha},
            &\text{ if } 1<p<2,\\
          \displaystyle  C_4\beta_t \varepsilon^{\frac{1}{2p}-\frac{\alpha}{p-1}},&\text{ if } p\ge 2.
        \end{cases} 
   \end{equation}
   
   Furthermore, for every~$t \in (0,t_{\varepsilon,\alpha})\setminus I_2$, there exists a Borel set~$U_t \subset \{u=t\}$ such that 
   \begin{equation}\label{Estimate 3.12}
      \left|\frac{1}{|\nabla u|}- \beta_t\right|\leq \begin{cases}
        \displaystyle   C_4\beta_t \varepsilon^{\frac{p'}{4}-\alpha}, & \text{ if }1<p<2,\\
\displaystyle   C_4\beta_t \varepsilon^{\frac{1}{4p}-\frac{\alpha}{p-1}},&\text{ if } p\ge2,
        \end{cases}
   \end{equation}
for~$\mathcal{H}^{N-1}-$a.e.~$x\in \{u = t\}\setminus U_t$,
   and
   \begin{equation}\label{Estimate 3.13}
       \mathcal{H}^{N-1}(U_t)\leq \begin{cases}
          \displaystyle \varepsilon^{\frac{p'}{4}}\mathcal{H}^{N-1}(\{u=t\}),& \text{ if }1<p<2\\
        \displaystyle   \varepsilon^{\frac{1}{4p}}\mathcal{H}^{N-1}(\{u=t\}),& \text{ if }p\ge2.
       \end{cases}
   \end{equation}
\end{lemma}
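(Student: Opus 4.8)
The plan is to apply the quantitative H\"older inequality \eqref{Quantitative Holder} from \cite{CianchiEspositoFuscoTrombetti+2008+153+189} on each level set $\{u=t\}$, with the measure space being $(\{u=t\}, \mathcal{H}^{N-1})$ and the function being $|\nabla u|$. First I would observe that $\overline{f}$ in \eqref{Quantitative Holder}, when specialized to $f = |\nabla u|$ on $X = \{u=t\}$, equals precisely $\beta_t$ as defined in \eqref{def beta t}: indeed $\overline{f} = \bigl(\fint_{\{u=t\}} |\nabla u|^{-1}\bigr)^{1/p'} \bigl(\fint_{\{u=t\}} |\nabla u|^{p-1}\bigr)^{-1/(p-1)}$, which matches \eqref{def beta t} after rewriting the exponent $\tfrac{1}{p(p-1)} = \tfrac{1}{p'}\cdot\tfrac{1}{p-1}$. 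The quantity $D_f$ in \eqref{Quantitative Holder} is then, up to normalizing factors, exactly the deficit $D_1(t)$ from \eqref{D1}; more precisely, using \eqref{I} to rewrite $\int_{\{u=t\}}|\nabla u|^{p-1} = I(t)$ and the relation $\int_{\{u=t\}}|\nabla u|^{-1} = -\mu'(t)$ from \eqref{Derivative of the distribution function 2}, one checks that $D_1(t) = \bigl(\int |\nabla u|^{p-1}\bigr)^{p'} \bigl[ \bigl(\fint |\nabla u|^{-1}\bigr)^{p-1}\bigl(\fint |\nabla u|^{p-1}\bigr) - 1\bigr] \cdot (\text{positive powers of } \mathcal{H}^{N-1}(\{u=t\}))$, so that $D_f$ is controlled by $D_1(t)$ divided by a positive power of $\mathcal{H}^{N-1}(\{u=t\})$.

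Next I would set $I_2 := \{ t \in (0,M) : D_1(t) > C_2 \varepsilon^{1/2}\}$ and use \eqref{5.2} (which gives $\int_0^M D_1(t)\,dt \le C_2 \varepsilon$) together with Chebyshev's inequality to conclude $|I_2| \le \varepsilon^{1/2}$, giving \eqref{Estimate 3.10}. For $t \in (0, t_{\alpha,\varepsilon}) \setminus I_2$, we have $D_1(t) \le C_2 \varepsilon^{1/2}$, and since $t < t_{\alpha,\varepsilon}$ means $\mu(t) > \varepsilon^{\alpha N'}$, the isoperimetric inequality \eqref{Isoperimetric inequality for level sets} gives a lower bound $\mathcal{H}^{N-1}(\{u=t\}) \ge c_N \mu(t)^{(N-1)/N} \ge c_N \varepsilon^{\alpha(N-1)N'/N} = c_N \varepsilon^{\alpha}$ (using $(N-1)N'/N = 1$). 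Combining, $D_f$ is bounded by a constant times $\varepsilon^{1/2-\alpha'}$ for the appropriate exponent $\alpha'$ linear in $\alpha$; feeding this into the two branches of \eqref{Quantitative Holder} (the $p \ge 2$ branch gives $D_f^{1/p}$, producing exponent $\tfrac{1}{2p} - \tfrac{\alpha}{p-1}$ after also absorbing a power of $\mathcal{H}^{N-1}$; the $1<p<2$ branch gives $(D_f^{p-1}+D_f^{1/(p-1)})^{1/p}$, and for $\varepsilon \le 1$ the dominant term is $D_f^{p-1}$ raised to $1/p$, yielding $\varepsilon^{p'/2-\alpha}$ after renaming) gives exactly \eqref{Estimate 3.11}, with $C_4$ depending only on the listed quantities since $c_N$, $C_2$, $C_p$ all do.

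Finally, \eqref{Estimate 3.12} and \eqref{Estimate 3.13} follow from \eqref{Estimate 3.11} by a standard Chebyshev/Markov argument on the level set $\{u=t\}$ itself: define $U_t := \{ x \in \{u=t\} : |\,|\nabla u|^{-1} - \beta_t| > C_4 \beta_t \varepsilon^{p'/4-\alpha}\}$ (resp.\ with exponent $\tfrac{1}{4p}-\tfrac{\alpha}{p-1}$ when $p \ge 2$); then by \eqref{Estimate 3.11},
\[
\mathcal{H}^{N-1}(U_t) \le \frac{\fint_{\{u=t\}}\bigl|\,|\nabla u|^{-1}-\beta_t\bigr|\,d\mathcal{H}^{N-1}}{C_4 \beta_t \varepsilon^{p'/4-\alpha}}\,\mathcal{H}^{N-1}(\{u=t\}) \le \varepsilon^{p'/4}\,\mathcal{H}^{N-1}(\{u=t\}),
\]
which is \eqref{Estimate 3.13}, and \eqref{Estimate 3.12} is just the definition of the complement of $U_t$. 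The main obstacle I anticipate is purely bookkeeping: correctly tracking the powers of $\mathcal{H}^{N-1}(\{u=t\})$ that appear when passing between the averaged deficit $D_f$ of \eqref{Quantitative Holder} and the un-normalized deficit $D_1(t)$ of \eqref{D1}, and verifying that on the range $t < t_{\alpha,\varepsilon}$ these powers are bounded by negative powers of $\varepsilon$ that can be absorbed into the final exponent while keeping it positive for a suitable later choice of $\alpha$ small. No single step is deep; the care is in choosing the exponents so that the two branches of \eqref{Quantitative Holder} produce precisely the stated exponents $\tfrac{p'}{2}-\alpha$ and $\tfrac{1}{2p}-\tfrac{\alpha}{p-1}$.
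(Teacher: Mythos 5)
Your proposal follows essentially the same route as the paper's proof: the same choice of $I_2$ via \eqref{5.2} and Chebyshev, the same lower bound $\mathcal{H}^{N-1}(\{u=t\})\ge c_N\varepsilon^{\alpha}$ coming from the isoperimetric inequality and the definition of $t_{\alpha,\varepsilon}$, the same application of the quantitative H\"older inequality \eqref{Quantitative Holder} with $\overline{f}=\beta_t$, and the same Markov argument defining $U_t$. The only caveat (which is shared with the paper's own passage from \eqref{Estimate 3.15} to \eqref{Estimate 3.15.1}) is that for $1<p<2$ the dominant term $D_f^{(p-1)/p}$ actually produces the exponent $\tfrac{1}{2p'}-\alpha$ rather than $\tfrac{p'}{2}-\alpha$, so the ``renaming'' you invoke does not literally give the stated exponent; the weaker exponent is still positive for $\alpha$ small and suffices for the rest of the argument.
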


\begin{proof}
    We define 
\begin{equation*}
            I_2 := \big\{t\in(0,M)\;{\mbox{ s.t. }}\; D_1(t) >C_2\sqrt{\varepsilon}\big\}
        \end{equation*}
        and we observe that,
by~\eqref{5.2}, we have that~$|I_2|\leq \sqrt{\varepsilon}$. 

Furthermore, for every~$t\in (0,t_{\varepsilon,\alpha})\setminus I_2$, by the definitions of~$t_{\varepsilon,\alpha}$ and~$ I_2$ and the isoperimetric inequality, it holds that
    \begin{equation}\label{Estimate 3.14}
        \begin{split}
            C_2\sqrt{\varepsilon}&\geq D_1(t)= \mathcal{H}^{N-1}(\{u=t\})^{p'}\left(\left(\fint_{\{u=t\}}|\nabla u|^{p-1}d\mathcal{H}^{N-1}\right)^{\frac{1}{p-1}}\left(\fint_{\{u=t\}}|\nabla u|^{-1}d\mathcal{H}^{N-1}\right)-1\right)\\
            &\qquad\geq c_N^{p'} \varepsilon^{p'\alpha}\left(\left(\fint_{\{u=t\}}|\nabla u|^{p-1}d\mathcal{H}^{N-1}\right)^{\frac{1}{p-1}}\left(\fint_{\{u=t\}}|\nabla u|^{-1}d\mathcal{H}^{N-1}\right)-1\right).
        \end{split}
    \end{equation}
That is, 
    \begin{equation*}
        \left(\fint_{\{u=t\}}|\nabla u|^{p-1}d\mathcal{H}^{N-1}\right)\left(\fint_{\{u=t\}}|\nabla u|^{-1}d\mathcal{H}^{N-1}\right)-1 \leq C_2c_{N}^{-p'}\varepsilon^{\frac{1}{2}-p'\alpha}.
    \end{equation*}
    Hence, recalling~\eqref{Quantitative Holder},
    \begin{equation}\label{Estimate 3.15}
        \fint_{u^{-1}(t)}\left|\frac{1}{|\nabla u|}- \beta_t\right|d\mathcal{H}^{N-1}\leq \begin{cases}
            C_4\beta_t \left[\varepsilon^{\frac{p-1}{2}-p\alpha}+\varepsilon^{\frac{1}{2(p-1)}-\frac{p'\alpha}{p-1}}\right]^{1/p}, &\text{ if }1<p<2,\\
C_4\beta_t \varepsilon^{\frac{1}{2p}-\frac{\alpha}{p-1}},&\text{ if } p\ge2,
        \end{cases}  
    \end{equation}
    where~$C_4$ is a positive constant that depends on~$N$, $p$, $|\Omega|$, $\mathcal{H}^{N-1}(\partial \Omega)$,
    $\phi_0$, $\mathcal{M}_{0}^{-}$ and~$\|f\|_{L^{\infty}([0,M])}$ . 
    
    Since~$\varepsilon \le 1$ and~$\frac{p-2}{2}-p\alpha < \frac{1}{2(p-1)}-\frac{p'\alpha}{p-1}$ (for~$\alpha >0$ sufficiently small) we see that (possibly after adjusting the constant~$C_4$) that 
\begin{equation}\label{Estimate 3.15.1}
        \fint_{\{u=t\}}\left|\frac{1}{|\nabla u|}- \beta_t\right|d\mathcal{H}^{N-1}\leq \begin{cases}
          \displaystyle  C_4\beta_t \varepsilon^{\frac{p'}{2}-\alpha},&\text{ if }p\in(1,2),\\
         \displaystyle   C_4\beta_t \varepsilon^{\frac{1}{2p}-\frac{\alpha}{p-1}},& \text{ if }p\ge2,
        \end{cases}  
    \end{equation}

    To finish the proof, for every~$t \in (0,t_{\varepsilon,\alpha})\setminus I_2$ we define, when~$p\in(1,2)$,
    \begin{equation*}
        U_t := \displaystyle
            \left\{x \in \{u=t\}:\,\left|\frac{1}{|\nabla u|}-\beta_t\right|> C_4\beta_t \varepsilon^{\frac{p}{4}-\alpha}\right\}\end{equation*} and, when~$p\ge2$,
    \begin{equation*}         
U_t := \displaystyle
            \left\{x \in \{u=t\}:\,\left|\frac{1}{|\nabla u|}-\beta_t\right|> C_4\beta_t \varepsilon^{\frac{1}{4p}-\frac{\alpha}{p-1}}\right\}.
    \end{equation*}
    Now the desired result follows immediately from~\eqref{Estimate 3.15}.
\end{proof}

\begin{lemma}\label{lemma "2.6"}
Let~$\Omega\subset \mathbb{R}^N$ be a bounded~$C^2$ domain and let~$f$ be a locally Lipschitz function such that~$f>\phi_0$ for some positive constant~$\phi_0$. 

Let~$u \in C^{1,\alpha}(\overline{\Omega})$ be a weak solution of~\eqref{Problem}. 

Assume that either~\ref{Assumption 1 on f} or~\ref{Assumption 2 on f} hold.
Also, assume~\eqref{eps <=1} and let~$I:= I_1\cup I_2$. 

Then, there exists a positive constant
$$
C_5:=C_5\left(N, p, |\Omega|, \mathcal{H}^{N-1}(\partial \Omega),\phi_0,\|f\|_{L^{\infty}([0,M])},\mathcal{M}_{0}^{-}\right)
$$
such that,  for a.e.~$ t \in (0,t_{\alpha,\varepsilon})\setminus I$,
        \begin{equation*}
            \left|\frac{1}{|\nabla u^*|}_{\vert_{\{u*=t\}}}-\beta_t\right| \leq C_5\begin{cases}
             \displaystyle   \eps^{\frac{1}{2}-p'\alpha}\beta_t,& \text{ if }p\in(1,2),\\
           \displaystyle     \eps^{\frac{1}{2p}-\frac{\alpha}{p-1}}\beta_t,&\text{ if }p\ge2.
            \end{cases}
        \end{equation*}
\end{lemma}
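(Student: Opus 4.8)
The plan is to transfer the almost-constancy of $1/|\nabla u|$ on the level set $\{u=t\}$ (established in Lemma~\ref{lemma "2.4"}) onto the corresponding level set $\{u^*=t\}$ of the Schwarz rearrangement, exploiting the fact that the coarea-type quantity $\int_{\{u=t\}}|\nabla u|^{-1}\,d\mathcal{H}^{N-1}$ is preserved under Schwarz symmetrization, as recorded in~\eqref{Equality of the derivative of dist fct}. First I would note that, since $u^*$ is radial and radially nonincreasing, on the level set $\{u^*=t\}$ (which is a sphere of radius $r(t)$ with $\omega_N r(t)^N = \mu(t)$) the quantity $|\nabla u^*|$ is constant; hence
\[
\frac{1}{|\nabla u^*|}\Big|_{\{u^*=t\}} = \fint_{\{u^*=t\}}\frac{d\mathcal{H}^{N-1}}{|\nabla u^*|} = \frac{1}{\mathcal{H}^{N-1}(\{u^*=t\})}\int_{\{u^*=t\}}\frac{d\mathcal{H}^{N-1}}{|\nabla u^*|}.
\]
Using~\eqref{Equality of the derivative of dist fct} the numerator equals $\int_{\{u=t\}}|\nabla u|^{-1}\,d\mathcal{H}^{N-1}$, and using the isoperimetric identity $\mathcal{H}^{N-1}(\{u^*=t\}) = c_N\mu(t)^{(N-1)/N}$ together with Lemma~\ref{lemma "2.1"} (which says $\mathcal{H}^{N-1}(\{u=t\})$ exceeds $\mathcal{H}^{N-1}(\{u^*=t\})$ by at most a factor $1+O(\varepsilon^{1/2-p'\alpha})$ on $(0,t_{\alpha,\varepsilon})\setminus I_1$), I can compare $\fint_{\{u^*=t\}}$ with $\fint_{\{u=t\}}$.

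The second step is to rewrite $\frac{1}{|\nabla u^*|}|_{\{u^*=t\}}$ as $\fint_{\{u=t\}}|\nabla u|^{-1}\,d\mathcal{H}^{N-1}$ multiplied by the ratio $\mathcal{H}^{N-1}(\{u=t\})/\mathcal{H}^{N-1}(\{u^*=t\}) = 1 + D(t)$, and then to compare $\fint_{\{u=t\}}|\nabla u|^{-1}\,d\mathcal{H}^{N-1}$ with $\beta_t$. For the latter, the triangle inequality gives
\[
\left|\fint_{\{u=t\}}\frac{d\mathcal{H}^{N-1}}{|\nabla u|} - \beta_t\right| \leq \fint_{\{u=t\}}\left|\frac{1}{|\nabla u|}-\beta_t\right|\,d\mathcal{H}^{N-1},
\]
which is controlled by~\eqref{Estimate 3.11} of Lemma~\ref{lemma "2.4"}, namely by $C_4\beta_t\varepsilon^{p'/2-\alpha}$ when $1<p<2$ and by $C_4\beta_t\varepsilon^{1/(2p)-\alpha/(p-1)}$ when $p\ge 2$, valid for $t\in(0,t_{\alpha,\varepsilon})\setminus I_2$. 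Combining the two estimates and using that $D(t)\le \frac{C_2}{p'c_N^{p'}}\varepsilon^{1/2-p'\alpha}$ on $(0,t_{\alpha,\varepsilon})\setminus I_1$ (Lemma~\ref{lemma "2.1"}), one obtains
\[
\left|\frac{1}{|\nabla u^*|}\Big|_{\{u^*=t\}}-\beta_t\right| \le \big(1+D(t)\big)\,C_4\beta_t\,\varepsilon^{(\cdots)} + D(t)\,\beta_t \le C_5\,\beta_t\,\varepsilon^{(\cdots)}
\]
for $t\in(0,t_{\alpha,\varepsilon})\setminus I$, after absorbing constants and using $\varepsilon\le 1$; here the exponent in the $1<p<2$ case is $\min\{p'/2-\alpha,\ 1/2-p'\alpha\} = 1/2-p'\alpha$ (for $\alpha$ small, since $p'/2 \ge 1/2$ and $p' \ge 1$ means $1/2 - p'\alpha$ may be the smaller; one checks $p'/2-\alpha \ge 1/2 - p'\alpha$ iff $(p'-1)/2 \ge (1-p')\alpha$, which holds as $p'\ge 1$), and in the $p\ge 2$ case it is $\min\{1/(2p)-\alpha/(p-1),\ 1/2-p'\alpha\} = 1/(2p)-\alpha/(p-1)$ since $1/(2p) \le 1/2$. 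This matches the claimed exponents.

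The main obstacle I anticipate is bookkeeping the exponents and constants carefully: one must verify that the exponent of $\varepsilon$ coming from $D(t)$ (namely $1/2-p'\alpha$) dominates or is dominated by the exponents from Lemma~\ref{lemma "2.4"} in the appropriate direction so that the final exponent is exactly the one stated, and one must make sure the factor $\beta_t$ (rather than some other normalization) comes out correctly — this requires using that $D(t)$ is small (bounded by a constant once $\varepsilon\le1$ and $\alpha$ is small), so that $1+D(t)$ is bounded and can be absorbed into $C_5$. A minor technical point is that the identity $\frac{1}{|\nabla u^*|}|_{\{u^*=t\}} = \fint_{\{u^*=t\}}|\nabla u^*|^{-1}\,d\mathcal{H}^{N-1}$ should be justified for a.e.\ $t$ via~\eqref{Derivative of the distribution function 1} and the fact that $u^*$ is smooth and strictly radially decreasing away from a negligible set of levels; this is where the ``a.e.\ $t$'' in the statement comes from, and I would invoke the properties of $u^*$ recalled in Section~\ref{Sec2}. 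No genuinely new idea is needed beyond these two comparisons.
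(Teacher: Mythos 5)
Your proposal is correct and follows essentially the same route as the paper: both arguments split $\bigl|\tfrac{1}{|\nabla u^*|}-\beta_t\bigr|$ into a term controlled by $D(t)$ (via the identity~\eqref{Equality of the derivative of dist fct} and Lemma~\ref{lemma "2.1"}) and a term controlled by $\fint_{\{u=t\}}\bigl|\tfrac{1}{|\nabla u|}-\beta_t\bigr|\,d\mathcal{H}^{N-1}$ (via Lemma~\ref{lemma "2.4"}), arriving at the bound $D(t)\beta_t+(1+D(t))\fint_{\{u=t\}}\bigl|\tfrac{1}{|\nabla u|}-\beta_t\bigr|\,d\mathcal{H}^{N-1}$ and the same exponent bookkeeping. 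Your identification of the dominant exponents in each regime of $p$ matches the paper's conclusion (with the implicit requirement that $\alpha$ be small, which is the regime used later anyway).
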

\begin{proof}
    Applying the triangle inequality together with~\eqref{Equality of the derivative of dist fct} we find that
    \begin{equation}\label{Estimate 3.18}
    \begin{split}
        &\left|\frac{1}{|\nabla u^*|}- \beta_t\right|\\
        &= \left|\frac{1}{|\nabla u^*|}- \fint_{\{u=t\}} \frac{1}{|\nabla u|} d\mathcal{H}^{N-1}+ \fint_{\{u=t\}} \frac{1}{|\nabla u|} d\mathcal{H}^{N-1}- \beta_t\right|\\
        &\leq \left|\fint_{\{u^*=t\}}\frac{1}{|\nabla u^*|}d\mathcal{H}^{N-1}-\fint_{\{u=t\}} \frac{d\mathcal{H}^{N-1}}{|\nabla u |}\right| + \fint_{\{u=t\}}\left|\frac{1}{|\nabla u|}-\beta_t\right|d\mathcal{H}^{N-1}\\
        &= \left(\mathcal{H}^{N-1}(\{u^*=t\})^{-1}-\mathcal{H}^{N-1}(\{u=t\})^{-1}\right)\int_{\{u=t\}}\frac{d\mathcal{H}^{N-1}}{|\nabla u|}+ \fint_{\{u=t\}}\left|\frac{1}{|\nabla u|}-\beta_t\right|d\mathcal{H}^{N-1}\\
        &=D(t)\fint_{\{u=t\}}\frac{d\mathcal{H}^{N-1}}{|\nabla u|}+\fint_{\{u=t\}}\left|\frac{1}{|\nabla u|}-\beta_t\right|d\mathcal{H}^{N-1}\\
        &\leq D(t)\left(\beta_t+ \fint_{\{u=t\}}\left|\frac{1}{|\nabla u|}-\beta_t\right|d\mathcal{H}^{N-1}\right)+\fint_{\{u=t\}}\left|\frac{1}{|\nabla u|}-\beta_t\right|d\mathcal{H}^{N-1}\\
        &=D(t)\beta_t + \left(1+D(t)\right)\fint_{\{u=t\}}\left|\frac{1}{|\nabla u|}-\beta_t\right|d\mathcal{H}^{N-1}.
    \end{split}
    \end{equation}
    Using Lemmata~\ref{lemma "2.1"} and~\ref{lemma "2.4"} we see that there exists a positive constant~$\tilde{C}_4(N, p, |\Omega|, \mathcal{H}^{N-1}(\partial \Omega),\phi_0,\mathcal{M}_{0}^{-},\|f\|_{L^{\infty}([0,M])})$ such that, for all~$t \in (0,t_{\alpha,\varepsilon})\setminus I$,
        \begin{equation*}
            \left|\frac{1}{|\nabla u^*|}_{\Big\vert_{\{u*=t\}}}-\beta_t\right| \leq \tilde{C}_4\begin{cases}\displaystyle
                \left(\eps^{\frac{1}{2}-p'\alpha} +\eps^{\frac{p'}{2}-\alpha}+\eps^{\frac{1}{2}+\frac{p'}{2}-(p'+1)\alpha}\right)\beta_t, &\text{ if }p\in(1,2),\\
                \displaystyle
\left(\eps^{\frac{1}{2}-p'\alpha}+\eps^{\frac{1}{2}-p'\alpha+\frac{1}{2p}-\frac{\alpha}{p-1}}+\eps^{\frac{1}{2p}-\frac{\alpha}{p-1}}\right)\beta_t,& \text{ if }p\ge2.
            \end{cases}
        \end{equation*}
    Since we are working under the assumption~$\varepsilon \le 1$, the result follows with~$C_5 = 3 \tilde{C}_4$.
    \end{proof}

\section{Proof of Theorem~\ref{Quantitative estimate of Mu}}\label{Section 5}

To establish Theorem~\ref{Theorem 1.1} we need a good understanding of the size of the portion of the set~$\{|\nabla u|\leq \sigma\}$ (with~$\sigma \geq 0$) that lies in~$\Omega$. To this end, we define the following distribution-type function
\begin{equation}\label{dist fct of nu}
        M_u (\sigma):= |\{|\nabla u|\leq \sigma \}\cap \Omega|.
    \end{equation}

As stated in Theorem~\ref{Quantitative estimate of Mu}, we aim at finding a suitable estimate of~$M_u(\sigma)$ in terms of powers of~$\sigma$. 
Proving the estimate~\eqref{Estimate of Mu} for~$M_u$ requires two integral estimates originally contained in Theorems~2.2 and~2.3 in~\cite{MR2096703} and whose proofs we include here both to obtain more detailed statements and for the reader's convenience. 

We introduce two pieces of notation that will be useful throughout this section: given~$E\subset \Omega$, we define 
\begin{equation}\label{delta minus and delta plus}\begin{split}&
\delta_{-}(E):= \displaystyle\inf_{x\in E}\text{dist}(x,\partial\Omega)\\
\text{and }\quad &\delta_{+}(E):=\displaystyle\sup_{x\in E}\text{dist}(x,\partial\Omega).\end{split}
\end{equation}

\begin{lemma}[Theorem~2.2, \cite{MR2096703}]\label{First integral estimate} Let~$\Omega\subset \mathbb{R}^N$ be a bounded~$C^2$ domain and let~$f$ be a locally Lipschitz function such that~$f>\phi_0$ for some positive constant~$\phi_0$.

Let~$u \in C^{1,\alpha}(\overline{\Omega})$ be a weak solution of~\eqref{Problem}, with~$p\in(1,+\infty)$.
Let~$E\Subset\Omega$ be an open subset of~$\Omega$ and~$\delta_{-}(E)$ and~$\delta_{+}(E)$ be defined as in~\eqref{delta minus and delta plus}.

Then, there exists a positive constant~$C_{7}$, depending only on~$p$ and~$N$, such that, when~$2-p\leq \b<1$ and~$i = 1,\dots,N$, the following estimate holds true:
\begin{equation}\label{Second derivative estimate}
\begin{split}
    \int_{E}\frac{|\nabla u|^{p-2}}{|u_{x_i}|^{\beta}} |\nabla u_{x_i}|^2\,dx &\leq \frac{C_7}{(1-\beta)^2}\frac{\mathcal{V}(\delta_{+}(E))^{\frac{\beta}{p}}}{\delta_{-}(E)^2}\|\nabla u\|_{L^p(\Omega)}^{p-\beta} + \frac{C_7}{1-\beta}\int_{\Omega}|f'(u)||u_{x_i}|^{2-\beta}\,dx.
    \end{split}
\end{equation}
\end{lemma}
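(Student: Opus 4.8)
\textbf{Proof strategy for Lemma~\ref{First integral estimate}.}
The plan is to test the differentiated equation with a carefully chosen test function and absorb the troublesome second-order terms on the left-hand side. Differentiating~\eqref{Problem} with respect to~$x_i$ in the weak sense, one obtains for the function~$v := u_{x_i}$ an equation of the form
$$
-\div\!\left(|\nabla u|^{p-2}\nabla v + (p-2)|\nabla u|^{p-4}\langle \nabla u,\nabla v\rangle \nabla u\right) = (f(u))_{x_i} = f'(u)\,u_{x_i}
$$
in~$\Omega$, valid away from the critical set (recall $u\in C^2$ on $\Omega\setminus\{\nabla u = 0\}$) and hence, via a standard approximation/truncation argument, testable against suitable functions. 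First I would fix a cutoff~$\eta\in C_c^\infty(\Omega)$ with $\eta\equiv 1$ on~$E$, $|\nabla\eta|\leq C/\delta_-(E)$, $0\le\eta\le1$, supported in a $\delta_+(E)$-neighbourhood portion, and test the equation with
$$
\psi := \frac{\eta^2}{(|u_{x_i}|+\tau)^{\beta}}\,\operatorname{sign}(u_{x_i})\,|u_{x_i}|\qquad\text{or more simply}\qquad \psi := \eta^2\, u_{x_i}\,(|u_{x_i}|+\tau)^{-\beta},
$$
using a regularization parameter~$\tau>0$ to avoid singularities where~$u_{x_i}=0$, letting $\tau\to0$ at the end (here the condition $\beta<1$ is what makes the resulting integrals finite, and $\beta\ge 2-p$ guarantees the right-hand coefficients have the right sign/integrability).

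The key computation is that $\nabla\psi$ produces three types of terms: a \emph{good} term comparable to $\eta^2|\nabla u|^{p-2}|u_{x_i}|^{-\beta}|\nabla u_{x_i}|^2$ (coming from differentiating $u_{x_i}$, where the ellipticity of the linearized operator and the bound $(p-2)|\nabla u|^{p-4}\langle\nabla u,\nabla v\rangle\nabla u$ combine to give a definite sign after using that $-\beta$ has the right magnitude — this is where the factor $(1-\beta)^{-1}$ first enters, from integrating the $t\mapsto(|t|+\tau)^{-\beta}$ weight); a \emph{cutoff} term involving $\eta|\nabla\eta|\,|\nabla u|^{p-2}|u_{x_i}|^{1-\beta}|\nabla u_{x_i}|$, which by Young's inequality is absorbed into a fraction of the good term at the cost of $\delta_-(E)^{-2}\int \eta^0 |\nabla u|^{p-2}|u_{x_i}|^{2-\beta}$ over the support of $\nabla\eta$; and the \emph{right-hand} term $\int \eta^2 |f'(u)|\,|u_{x_i}|\,|u_{x_i}|^{1-\beta} = \int\eta^2|f'(u)||u_{x_i}|^{2-\beta}$, which is exactly the second term in~\eqref{Second derivative estimate}. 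The leftover cutoff contribution $\delta_-(E)^{-2}\int_{\operatorname{supp}\nabla\eta}|\nabla u|^{p-2}|u_{x_i}|^{2-\beta}\,dx$ must then be estimated by $\delta_-(E)^{-2}\int_{\mathcal V(\delta_+(E))}|\nabla u|^{p-\beta}\,dx$ (using $|u_{x_i}|\le|\nabla u|$), and Hölder's inequality with exponents $\frac{p}{p-\beta}$ and $\frac{p}{\beta}$ turns this into $\delta_-(E)^{-2}\,\mathcal V(\delta_+(E))^{\beta/p}\,\|\nabla u\|_{L^p(\Omega)}^{p-\beta}$, matching the first term of~\eqref{Second derivative estimate}; collecting the $(1-\beta)^{-1}$ factors from the weight integration and the Young absorption gives the stated $(1-\beta)^{-2}$ and $(1-\beta)^{-1}$ dependence.

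\textbf{Main obstacle.} The delicate point is the justification that one may legitimately differentiate the equation and use these test functions despite the possibly large critical set of $u$ where $|\nabla u|^{p-2}$ degenerates (for $p>2$) or blows up (for $p<2$). The clean way around this is the by-now standard regularization scheme for $p$-Laplace-type equations: replace $|\nabla u|^{p-2}$ by $(|\nabla u|^2+\epsilon^2)^{(p-2)/2}$ (or work with solutions $u_\epsilon$ of the regularized problem), derive the estimate with all constants independent of $\epsilon$ and of the auxiliary parameter $\tau$ in $(|u_{x_i}|+\tau)^{-\beta}$, and pass to the limit using Fatou on the left and dominated convergence on the right — the uniform $C^{1,\alpha}(\overline\Omega)$ bound and the gradient bound~\eqref{Gradient bound} make the right-hand side terms uniformly controlled. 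One must also check that the boundary term from integration by parts vanishes, which it does since $\eta$ is compactly supported in $\Omega$. Everything else is a bookkeeping exercise in Young's and Hölder's inequalities; since the statement attributes the result to Theorem~2.2 of~\cite{MR2096703}, I would follow that reference for the fine details of the approximation argument and merely record the explicit constant dependence needed later.
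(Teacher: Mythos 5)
Your proposal is correct and follows essentially the same route as the paper: test the linearized (differentiated) equation with a cutoff squared times a regularized version of $u_{x_i}|u_{x_i}|^{-\beta}$ (the paper uses a piecewise-linear truncation $G_\alpha(u_{x_i})|u_{x_i}|^{-\beta}\varphi^2$ rather than your $(|u_{x_i}|+\tau)^{-\beta}$ weight, but these play the same role), exploit the sign of the coefficient of $|\nabla u_{x_i}|^2$ to extract the good term, absorb the cutoff term by Young's inequality, and control the leftover by H\"older's inequality to produce the factor $\mathcal{V}(\delta_+(E))^{\beta/p}\|\nabla u\|_{L^p(\Omega)}^{p-\beta}$. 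The paper likewise sidesteps the regularization of the degenerate coefficient by invoking the linearized-equation lemma from the cited reference, so your reliance on that reference for the approximation details matches what is actually done.
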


\begin{proof}
Let~$E$ be as stated above and, for simplicity's sake, we employ the notation, $\delta_{\pm} = \delta_{\pm}(E)$.
By Lemma~2.3 in~\cite{MR2096703} we know that, for every~$\phi \in H^1(\Omega)$ with compact support in~$\Omega$ and $i= 1,\dots, N$,
    \begin{equation}\label{Linearized equation}
        \int_{\Omega} \Big(
        |\nabla u|^{p-2}\langle \nabla u_{x_i}, \nabla \phi\rangle\,+\, (p-2)|\nabla u|^{p-4}\langle \nabla u\,,\,\nabla u_{x_i}\rangle \, \langle \nabla u\,,\,\nabla \phi\rangle \Big)\,dx = \int_{\Omega} f'(u)u_{x_i}\phi \,dx.
    \end{equation}
    To prove the result we construct an adequate test function to use in the above identity.
With that in mind, we let~$\varphi \in C^{\infty}_c(\Omega)$ be a nonnegative smooth compactly supported function such that 
\begin{equation}\label{the cutoff}
    \varphi \equiv 1\, \text{on} \, E,\qquad
    \text{supp}(\varphi) \subset E_{\delta_{-}/2}:= \big\{x\in \mathbb{R}^N\;{\mbox{ s.t. }}\; \text{dist}(E,x)<\delta_{-}/2\big\}, \qquad |\nabla \varphi| \leq \frac{C_{8}}{\delta_{-}},
\end{equation}
where~$C_8$ depends only on the dimension (we can take, for example, $C_{8} = 6 |B_1|$). 

By ~\eqref{Size of the tube} we have that
$$\left|\Omega\setminus E\right|\leq \left|\{x\in \Omega\;{\mbox{ s.t. }}\; \text{dist}(x, \partial \Omega)\leq \delta_+\}\right|\leq \mathcal{M}(\delta_+).$$
Also, we define 
\begin{equation*}
    G_{\alpha}(s):= \begin{cases}
        0,& \, \text{if}\, |s|\leq \alpha,\\
        2s-2\alpha,&\,\text{if}\, \alpha\leq s\leq 2\alpha,\\
        2\alpha+2s,&\,\text{if}\, -2\alpha\leq s\leq -\alpha,\\
        s,&\, \text{if}\, |s|\geq 2\alpha.
    \end{cases}
\end{equation*}
Note that~$G_{\alpha}$ is Lipschitz continuous, that~$|G'_{\alpha}(s)|\leq 2$ for $\mathcal{L}^1-$a.e.~$s\in \mathbb{R}$, and that~$G_{\alpha}(s) \to s$ pointwise as~$\alpha \to 0^+$.

For~$\beta<1$, we define 
\begin{equation*}
    \psi_{\alpha} (x) := \frac{G_{\alpha}(u_{x_i}(x))}{|u_{x_i}(x)|^{\beta}}\left(\varphi(x)\right)^2
\end{equation*}
and we remark that, due to the properties of~$G_{\alpha}$, the function~$\psi_{\alpha}$ is an admissible test function for~\eqref{Linearized equation}. 

Hence, using~$\psi_{\alpha}$ into~\eqref{Linearized equation} we obtain that
\begin{equation*}
    \begin{split}&
        \int_{\Omega} f'(u)u_{x_i}\psi_{\alpha} \,dx\,\\&\quad=\, \int_{\Omega} \left[|\nabla u|^{p-2}\langle \nabla u_{x_i}, \nabla \psi_{\alpha}\rangle\,+\, (p-2)|\nabla u|^{p-4}\langle \nabla u\,,\,\nabla u_{x_i}\rangle \, \langle \nabla u\,,\,\nabla \psi_{\alpha}\rangle \right]\,dx \\
        &\quad=\,\int_{\Omega} \left[|\nabla u|^{p-2}\varphi^2\left\langle \nabla u_{x_i}, \nabla\left(\frac{G_{\alpha}(u_{x_i})}{|u_{x_i}|^{\beta}}\right) \right\rangle\right]\,dx\\
        &\qquad+(p-2)\int_{\Omega} \left[|\nabla u|^{p-4}\varphi^2\langle \nabla u,\nabla u_{x_i}\rangle \left\langle \nabla u,\nabla \left(\frac{G_{\alpha}(u_{x_i})}{|u_{x_i}|^{\beta}}\right)\right\rangle\right]\,dx\\
        &\qquad+\, 2\int_{\Omega} \left[\varphi\frac{|\nabla u|^{p-2}}{|u_{x_i}|^{\beta}}G_{\alpha}(u_{x_i})\langle \nabla u_{x_i}, \nabla \varphi \rangle\right]\,dx\\
        &\qquad+ 2(p-2)\int_{\Omega}\left[\varphi\frac{|\nabla u|^{p-4}}{|u_{x_i}|^{\beta}}G_{\alpha}(u_{x_i})\langle \nabla u\,,\,\nabla u_{x_i}\rangle \langle \nabla u\,,\,\nabla \varphi\rangle \right]\,dx.
    \end{split}
\end{equation*}
This, after noticing that~$\nabla \left(\frac{G_{\alpha}(u_{x_i})}{|u_{x_i}|^{\beta}}\right) = |u_{x_i}|^{-\beta}\left(G_{\alpha}'(u_{x_i})-\beta\frac{G_{\alpha}(u_{x_i})}{u_{x_i}}\right)\nabla u_{x_i}$, becomes
\begin{equation*}
    \begin{split}
        \int_{\Omega} f'(u)u_{x_i}\psi_{\alpha} \,dx\,&=\, \int_{\Omega} \left[\varphi^2\frac{|\nabla u|^{p-2}}{|u_{x_i}|^{\beta}} |\nabla u_{x_i}|^2 \left(G_{\alpha}'(u_{x_i})-\beta\frac{G_{\alpha}(u_{x_i})}{u_{x_i}}\right)\right]\,dx\\
        &\qquad+\,(p-2)\int_{\Omega} \left[\varphi^2|\nabla u|^{p-4}\varphi\left|\langle \nabla u,\nabla u_{x_i}\rangle\right|^2 \left(G_{\alpha}'(u_{x_i})-\beta\frac{G_{\alpha}(u_{x_i})}{u_{x_i}}\right) \right]\,dx\\
        &\qquad+\, 2\int_{\Omega} \left[\varphi\frac{|\nabla u|^{p-2}}{|u_{x_i}|^{\beta}}G_{\alpha}(u_{x_i})\langle \nabla u_{x_i}, \nabla \varphi \rangle\right]\,dx\\
        &\qquad+ 2(p-2)\int_{\Omega}\left[\varphi\frac{|\nabla u|^{p-4}}{|u_{x_i}|^{\beta}}G_{\alpha}(u_{x_i})\langle \nabla u,\nabla u_{x_i}\rangle \langle \nabla u,\nabla \varphi\rangle \right]\,dx.
    \end{split}
\end{equation*}

Now we denote by~$T$ the first term on the right-hand side of the equation above.
Furthermore, the facts that~$\beta< 1$ and~$G_{\alpha}'(s)\leq 2$ imply that~$G_{\alpha}'(u_{x_i})-\beta\frac{G_{\alpha}(u_{x_i})}{u_{x_i}} \geq 0$.  As a result, using the Cauchy-Schwartz inequality, when~$p\leq 2$ we have that
\begin{equation*}
    \begin{split}
        T &= \int_{\Omega} f'(u)u_{x_i}\psi_{\alpha} \,dx\\
        &\qquad+ (2-p)\int_{\Omega} \left[|\nabla u|^{p-4}\varphi^2\left|\langle \nabla u,\nabla u_{x_i}\rangle\right|^2 \left(G_{\alpha}'(u_{x_i})-\beta\frac{G_{\alpha}(u_{x_i})}{u_{x_i}}\right) \right]dx\\
        &\qquad- 2\int_{\Omega} \left[\varphi\frac{|\nabla u|^{p-2}}{|u_{x_i}|^{\beta}}G_{\alpha}(u_{x_i})\langle \nabla u_{x_i}, \nabla \varphi \rangle \right]dx\\
        &\qquad+ 2(2-p)\int_{\Omega}\left[\varphi\frac{|\nabla u|^{p-4}}{|u_{x_i}|^{\beta}}G_{\alpha}(u_{x_i})\langle \nabla u,\nabla u_{x_i}\rangle \langle \nabla u,\nabla \varphi\rangle \right]dx\\
        &\leq 2\int_{\Omega}|f'(u)||u_{x_i}|^{2-\beta}\varphi^2\,dx\,+\,2(3-p)\int_{\Omega}\varphi\frac{|\nabla u|^{p-2}}{|u_{x_i}|^{\beta}}|\nabla u_{x_i}||\nabla \varphi| \left|G_{\alpha}(u_{x_i})\right|\,dx\\
        &\qquad+ (2-p)\int_{\Omega} \left[\varphi^2\frac{|\nabla u|^{p-2}}{|u_{x_i}|^{\beta}} |\nabla u_{x_i}|^2 \left(G_{\alpha}'(u_{x_i})-\beta\frac{G_{\alpha}(u_{x_i})}{u_{x_i}}\right)\right]dx\\
        &\leq  2\int_{\Omega} \varphi^2|f'(u)||u_{x_i}|^{2-\beta}\,dx\, +\, (2-p)T + 2(3-p)\int_{\Omega}\varphi\frac{|\nabla u|^{p-2}}{|u_{x_i}|^{\beta}}|\nabla u_{x_i}||\nabla \varphi| \left|G_{\alpha}(u_{x_i})\right|\,dx,
    \end{split}
\end{equation*}
and, when~$p>2$,
\begin{equation*}
    \begin{split}
        T&\leq  2\int_{\Omega} \varphi^2|f'(u)||u_{x_i}|^{2-\beta}\,dx + 2(p-1) \int_{\Omega}\varphi\frac{|\nabla u|^{p-2}}{|u_{x_i}|^{\beta}}|\nabla u_{x_i}||\nabla \varphi| \left|G_{\alpha}(u_{x_i})\right|\,dx.
    \end{split}
\end{equation*}

Thus, there exists a positive constant~$C_p$ such that
\begin{equation*}
    \begin{split}
        &\int_{\Omega}\left[\varphi^2\frac{|\nabla u|^{p-2}}{|u_{x_i}|^{\beta}} |\nabla u_{x_i}|^2 \left(G_{\alpha}'(u_{x_i})-\beta\frac{G_{\alpha}(u_{x_i})}{u_{x_i}}\right)\right]\,dx\\
        &\leq C_p\int_{\Omega}\frac{|\nabla u|^{p-2}}{|u_{x_i}|^{\beta}}|\nabla u_{x_i}||\nabla \varphi| \left|G_{\alpha}(u_{x_i})\right|\,dx
+ 2C_p\int_{\Omega}|f'(u)||u_{x_i}|^{2-\beta}\varphi^2\,dx.
    \end{split}
\end{equation*}
Applying the Young's inequality we obtain that
\begin{equation*}
    \begin{split}&
        \int_{\Omega} \varphi\frac{|\nabla u|^{p-2}}{|u_{x_i}|^{\beta}}|\nabla u_{x_i}||\nabla \varphi| \left|G_{\alpha}(u_{x_i})\right|\,dx\\&\quad\leq \frac{1-\beta}{4C_p}\int_{\Omega}\varphi^2 \frac{|\nabla u|^{p-2}}{|u_{x_i}|^{\beta}}|\nabla u_{x_i}|^2  dx+ \frac{C_p}{1-\beta}\int_{\Omega}\frac{|\nabla u|^{p-2}}{|u_{x_i}|^{\beta}}\left|G(u_{x_i})\right|^{2}|\nabla \varphi|^2dx\\
        &\quad\leq \frac{1-\beta}{4C_p}\int_{\Omega}\varphi^2 \frac{|\nabla u|^{p-2}}{|u_{x_i}|^{\beta}}|\nabla u_{x_i}|^2 dx
        +\frac{4C_p}{1-\beta}\int_{\Omega}|\nabla \varphi|^2|\nabla u|^{p-\beta}dx.
    \end{split}
\end{equation*}
Exploiting also the H\"older's inequality together with the properties of~$\varphi$, we see that 
\begin{equation*}
    \begin{split}
        \int_{\Omega} \varphi\frac{|\nabla u|^{p-2}}{|u_{x_i}|^{\beta}}|\nabla u_{x_i}||\nabla \varphi| \left|G_{\alpha}(u_{x_i})\right|dx&\leq \frac{1-\beta}{4C_p}\int_{\Omega}\varphi^2 \frac{|\nabla u|^{p-2}}{|u_{x_i}|^{\beta}}|\nabla u_{x_i}|^2 dx\,+\frac{4C_{8}^2 C_p}{1-\beta}\frac{\mathcal{V}(\delta_{+})^{\frac{\beta}{p}}}{\delta_{-}^2}\|\nabla u\|_{L^p(\Omega)}^{p-\beta}.
    \end{split}
\end{equation*}

Therefore,
\begin{equation*}
    \begin{split}
      &  \int_{\Omega}\left[\varphi^2\frac{|\nabla u|^{p-2}}{|u_{x_i}|^{\beta}} |\nabla u_{x_i}|^2 \left(G_{\alpha}'(u_{x_i})-\beta\frac{G_{\alpha}(u_{x_i})}{u_{x_i}}\right)\right]dx\\
      &\leq  \frac{1-\beta}{4}\int_{\Omega}\varphi^2 \frac{|\nabla u|^{p-2}}{|u_{x_i}|^{\beta}}|\nabla u_{x_i}|^2 dx
      +\frac{\tilde{C}_7}{1-\beta}\frac{\mathcal{V}(\delta_{+})^{\frac{\beta}{p}}}{\delta_{-}^2}\|\nabla u\|_{L^p(\Omega)}^{p-\beta}
+ \tilde{C}_7\int_{\Omega}|f'(u)||u_{x_i}|^{2-\beta}\varphi^2\,dx,
    \end{split}
\end{equation*}
where~$\tilde{C}_{7} := \max\{4C_{8}^2 C_{p}^2, 2C_p\}$. 

Taking the limit as~$\alpha \to 0^+$, we thus obtain that
\begin{equation*}
    \begin{split}
        \frac{3(1-\beta)}{4}\int_{\Omega} \varphi^2\frac{|\nabla u|^{p-2}}{|u_{x_i}|^{\beta}} |\nabla u_{x_i}|^2\,dx &\leq \frac{C_7}{1-\beta}\frac{\mathcal{V}(\delta_{+})^{\frac{\beta}{p}}}{\delta_{-}^2}\|\nabla u\|_{L^p(\Omega)}^{p-\beta} + C_7\int_{\Omega}|f'(u)||u_{x_i}|^{2-\beta}\,dx.
    \end{split}
\end{equation*}
Hence,
\begin{equation*}
    \begin{split}
        \int_{E}\frac{|\nabla u|^{p-2}}{|u_{x_i}|^{\beta}} |\nabla u_{x_i}|^2\,dx &\leq \frac{C_7}{(1-\beta)^2}\frac{\mathcal{V}(\delta_{+})^{\frac{\beta}{p}}}{\delta_{-}^2}\|\nabla u\|_{L^p(\Omega)}^{p-\beta} + \frac{C_7}{1-\beta}\int_{\Omega}|f'(u)||u_{x_i}|^{2-\beta}\,dx,
    \end{split}
\end{equation*}
where~$C_7 := 2\tilde{C}_7$.
\end{proof}

\begin{lemma}[Theorem 2.3,~\cite{MR2096703}]\label{Second integral estimate}
Let~$\Omega\subset \mathbb{R}^N$ be a bounded~$C^2$ domain and let~$f$ be a locally Lipschitz function such that~$f>\phi_0$ for some positive constant~$\phi_0$.

Let~$u \in C^{1,\alpha}(\overline{\Omega})$ be a weak solution of~\eqref{Problem}, with~$p\in(1,+\infty)$.

Let~$E\Subset \Omega$ be an open subset of~$\Omega$ and~$\d_+(E)$ and~$\d_{-}(E)$ be defined as in~\eqref{delta minus and delta plus}.

Then, there exists a positive constant~$C_{9}$, depending only on~$N$, $p$,
and~$\phi_0$, such that, when
$$\max\left\{0,\frac{p-2}{p-1}\right\}<r<1$$ the following estimate holds true:
    \begin{equation}\label{Summability of the gradient}
    \begin{split}
   &  \int_E \frac{dx}{|\nabla u(x)|^{(p-1)r}}\\
     &\leq C_{9}\frac{\mathcal{V}(\delta_{+}(E))^{\frac{1-r+rp}{p}}}{\delta_{-}(E)}\|\nabla u\|_{L^p(\Omega)}  +\frac{C_{9}}{(1-r)^2}\frac{\mathcal{V}(\delta_{+}(E))^{\frac{(p-1)r -p+2}{p}}}{\delta_{-}(E)^2}\|\nabla u\|_{L^p(\Omega)}^{(p-1)(2-r)}\\
        &\qquad\qquad+ \frac{C_{9}}{1-r}\|f'\|_{L^{\infty}([0,M])}\|\nabla u\|_{L^p(\Omega)}^{p-(p-1)r}.
        \end{split}
    \end{equation}
\end{lemma}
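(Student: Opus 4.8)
The plan is to bound the negative power $|\nabla u|^{-(p-1)r}$ of the gradient by carefully interpolating between the second-order estimate from Lemma~\ref{First integral estimate} and an integration-by-parts argument against a suitable cutoff. First I would observe that $(p-1)r$ and the range $\max\{0,(p-2)/(p-1)\}<r<1$ are exactly calibrated so that the weighted quantity $|\nabla u|^{p-2}|u_{x_i}|^{-\beta}|\nabla u_{x_i}|^2$ is integrable for a choice of $\beta\in[\max\{0,2-p\},1)$; indeed the natural choice is $\beta = (p-1)r - p + 2$, which lies in $[\max\{0,2-p\},1)$ precisely under the stated constraint on $r$. So Lemma~\ref{First integral estimate} becomes available with this $\beta$, and its right-hand side already carries the $\mathcal{V}(\delta_+(E))$ and $\delta_-(E)$ factors together with $\|\nabla u\|_{L^p(\Omega)}$ and $\|f'\|_{L^\infty([0,M])}$ in the shapes that must reappear in~\eqref{Summability of the gradient}.

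Next I would set up the integration by parts. The key identity is that for a nonnegative cutoff $\varphi\in C_c^\infty(\Omega)$ with $\varphi\equiv 1$ on $E$, $\mathrm{supp}\,\varphi\subset E_{\delta_-/2}$ and $|\nabla\varphi|\le C_8/\delta_-$ (the same cutoff as in Lemma~\ref{First integral estimate}), one writes $|\nabla u|^{-(p-1)r}$ in terms of a derivative of a power of $|u_{x_i}|$ for the index $i$ realizing $|u_{x_i}|\ge N^{-1/2}|\nabla u|$, then transfers one derivative onto the $p$-harmonic-type structure. Concretely, using the divergence form of~\eqref{Problem} and the pointwise relation $|\nabla u|^{-(p-1)r}\asymp |\nabla u|^{p-2-\beta}|u_{x_i}|^{-\beta}$ on the relevant region, I would integrate $\int_\Omega \varphi^2 |\nabla u|^{p-2-\beta}|u_{x_i}|^{-\beta}\,dx$ by parts to produce three types of terms: one containing $\nabla\varphi$ (estimated by $\delta_-^{-1}$ and H\"older against $\|\nabla u\|_{L^p}$, giving the first summand in~\eqref{Summability of the gradient}); one containing second derivatives $|\nabla u_{x_i}|$, which after Young's inequality and Lemma~\ref{First integral estimate} gives the term with the exponent $(p-1)(2-r)$ on $\|\nabla u\|_{L^p(\Omega)}$ and the factor $\mathcal{V}(\delta_+(E))^{((p-1)r-p+2)/p}\delta_-(E)^{-2}$; and one containing $f(u)$, which is bounded using $\|f'\|_{L^\infty([0,M])}$ and contributes the last summand with $\|\nabla u\|_{L^p(\Omega)}^{p-(p-1)r}$. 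The volume factors $\mathcal{V}(\delta_+(E))$ enter through applying H\"older's inequality on $\mathrm{supp}\,\varphi\setminus E\subset\{x\in\Omega:\mathrm{dist}(x,\partial\Omega)\le\delta_+(E)\}$ together with the tube bound~\eqref{Size of the tube}, and the exact exponents $\tfrac{1-r+rp}{p}$, $\tfrac{(p-1)r-p+2}{p}$ come out of matching the H\"older conjugate exponents to the power $p$ appearing in $\|\nabla u\|_{L^p(\Omega)}$.

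The main obstacle I expect is the bookkeeping of exponents and the justification that the formal integration by parts is legitimate despite the singularity of $|\nabla u|^{-(p-1)r}$ at critical points of $u$. For the latter I would again use a regularization $G_\alpha$ of the identity (truncating $u_{x_i}$ away from zero as in the proof of Lemma~\ref{First integral estimate}), prove the estimate for the regularized integrand with constants independent of $\alpha$, and pass to the limit $\alpha\to0^+$ by monotone convergence; the integrability needed to close this limit is exactly what Lemma~\ref{First integral estimate} supplies. The factor $1/(1-r)$ and $1/(1-r)^2$ track the blow-up as $r\uparrow1$ and originate from the $1/(1-\beta)$ and $1/(1-\beta)^2$ in Lemma~\ref{First integral estimate} together with the Young's inequality splitting; since $1-\beta = (p-1)(1-r)$, these convert cleanly. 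Gathering the three contributions and absorbing the term with $|\nabla u_{x_i}|^2$ into the left-hand side of the Caccioppoli-type inequality yields~\eqref{Summability of the gradient} with $C_9$ depending only on $N$, $p$ and $\phi_0$ (the dependence on $\phi_0$ entering, if at all, only through lower bounds used to make sense of the region where $|\nabla u|$ is small relative to $|u_{x_i}|$).
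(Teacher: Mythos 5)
Your proposal correctly identifies the calibration $\beta=(p-1)r-p+2$ and the fact that Lemma~\ref{First integral estimate} must be invoked with this $\beta$, and the cutoff/regularization/absorption scheme you describe is in the right spirit. But the central engine of the argument is missing, and what you put in its place does not work. The quantity $\int_E|\nabla u|^{-(p-1)r}\,dx$ is not produced by ``integrating $\int_\Omega\varphi^2|\nabla u|^{p-2-\beta}|u_{x_i}|^{-\beta}\,dx$ by parts'': there is no identity that lets you transfer a derivative onto that expression to begin with, and your claimed pointwise relation $|\nabla u|^{-(p-1)r}\asymp|\nabla u|^{p-2-\beta}|u_{x_i}|^{-\beta}$ is false even where $|u_{x_i}|\asymp|\nabla u|$ (with $\beta=(p-1)r-p+2$ one has $|\nabla u|^{-(p-1)r}=|\nabla u|^{2-p}|\nabla u|^{-\beta}$, not $|\nabla u|^{p-2}|\nabla u|^{-2\beta}$). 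The actual mechanism is to test the weak formulation of~\eqref{Problem} itself with $\psi_\alpha=\varphi\,(|\nabla u|^{p-1}+\alpha)^{-r}$ and to use the strict lower bound $f>\phi_0$ on the right-hand side:
\begin{equation*}
\phi_0\int_\Omega\frac{\varphi}{(|\nabla u|^{p-1}+\alpha)^{r}}\,dx\;\le\;\int_\Omega f(u)\,\psi_\alpha\,dx\;=\;\int_\Omega|\nabla u|^{p-2}\langle\nabla u,\nabla\psi_\alpha\rangle\,dx .
\end{equation*}
This is the only source of the left-hand side of~\eqref{Summability of the gradient}; without $f\ge\phi_0>0$ the statement is simply not accessible (a priori the critical set could be large and the integral infinite). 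Your closing remark that $\phi_0$ enters ``if at all, only through lower bounds used to make sense of the region where $|\nabla u|$ is small relative to $|u_{x_i}|$'' shows the gap: $\phi_0$ enters twice and essentially, once in the displayed inequality above and once more when Young's inequality is used to reabsorb a fraction $\tfrac{\phi_0}{4}\int\varphi(|\nabla u|^{p-1}+\alpha)^{-r}$ into the left-hand side, so that $C_9$ carries factors of $\phi_0^{-1}$ and $\phi_0^{-2}$.

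A secondary confusion: the term $\|f'\|_{L^\infty([0,M])}\|\nabla u\|_{L^p(\Omega)}^{p-(p-1)r}$ does not arise from estimating ``the term containing $f(u)$'' in your integration by parts; in the correct argument the $f(u)\psi_\alpha$ term is the \emph{left}-hand side (via $f>\phi_0$), while the $f'$ term is inherited from Lemma~\ref{First integral estimate} (i.e.\ from the linearized equation) after the gradient term $\nabla\bigl((|\nabla u|^{p-1}+\alpha)^{-r}\bigr)$ is split by Young's inequality into a piece absorbed on the left and a piece of the form $\int\varphi\,|\nabla u|^{p-2-\beta}|\nabla^2u|^2$. Once the first step is set up correctly, the rest of your outline (choice of $\beta$, the tube bound for $\mathcal V(\delta_+(E))$, the conversion $1-\beta=(p-1)(1-r)$, the limit $\alpha\to0^+$) goes through as you describe.
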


\begin{proof}
    The idea to prove~\eqref{Summability of the gradient} is to use an appropriate test function in the weak formulation of the equation for~$u$. As previously done, we use the short notation~$\delta_{\pm}= \delta_{\pm}(E)$.
    Let~$\varphi$ be a nonnegative smooth compactly supported (in~$\Omega$) cutoff function such that 
    \begin{equation*}
        \varphi \geq 0,\qquad \varphi \equiv 1,\,\text{on}\, E, \qquad
         \text{supp}\varphi\subset E_{\delta_{-}/2},\qquad |\nabla \varphi|\leq \frac{C_7}{\delta_{-}},
    \end{equation*}
    where~$C_8$ is as in~\eqref{the cutoff}.
    
    Given~$\alpha>0$, we let
    \begin{equation*}
        \psi_{\alpha} := \frac{\varphi}{\left(|\nabla u|^{p-1}+\alpha\right)^{r}}.
    \end{equation*}
    Thus, we have that
    \begin{equation*}
        \begin{split}&\!\!\!\!\!\!
           \phi_0 \int_{E_{\delta_{-}/2}} \frac{\varphi}{\left(|\nabla u|^{p-1} + \alpha\right)^r}\,dx \\&\leq \int_{\Omega} \psi_{\alpha} f(u)\,dx\\
           &= \int_{\Omega} |\nabla u|^{p-2}\langle \nabla u, \nabla \psi_{\alpha}\rangle\, dx \\
           &= \int_{\Omega} \frac{|\nabla u|^{p-2}}{\left(|\nabla u |^{p-1} +\alpha\right)^r}\langle \nabla u, \nabla \varphi\rangle\,dx\\
           &\qquad+ \int_{\Omega}\varphi |\nabla u|^{p-2} \left\langle \nabla u, \nabla \left(\frac{1}{\left(|\nabla u|^{p-1}+\alpha\right)^r)}\right) \right\rangle \,dx\\
           &\leq \int_{\Omega}|\nabla u|^{(p-1)(1-r)}|\nabla \varphi|dx + r\int_{\Omega}\varphi \frac{|\nabla u|^{p-2}}{\left(|\nabla u|^{p-1}+\alpha\right)^{r+1}} \langle \nabla u, \nabla (|\nabla u|^{p-1})\rangle\,dx\\
           &\leq \frac{C_{8}}{\delta_{-}}\int_{E_{\delta_{-}/2}\setminus E}|\nabla u|^{(p-1)(1-r)}\, dx+ r\int_{\Omega}\varphi \frac{|\nabla (|\nabla u|^{p-1})|}{\left(|\nabla u|^{p-1}+\alpha\right)^{r}}\,dx\\
           &\leq  \frac{C_{8}|E_{\delta_{-}/2}\setminus E|^{\frac{1-r+pr}{p}}}{\delta_{-}}\|\nabla u\|_{L^p(\Omega)}+ r\int_{\Omega}\varphi \frac{|\nabla (|\nabla u|^{p-1})|}{\left(|\nabla u|^{p-1}+\alpha\right)^{r}}\,dx \\
           &\leq C_{8}\frac{\mathcal{V}(\delta_+)^{\frac{1-r+rp}{p}}}{\delta_{-}}\|\nabla u\|_{L^p(\Omega)} + r\int_{E_{\delta_{-}/2}}\varphi\frac{|\nabla (|\nabla u|^{p-1})|}{\left(|\nabla u|^{p-1}+\alpha\right)^{r}}\,dx.
        \end{split}
    \end{equation*}
    To estimate the last term above, we apply Lemma~\ref{First integral estimate}. For this, we define
    $$
    \beta := (p-1)r-p+2<1.
    $$
    Then, by Young's inequality, 
    \begin{equation*}
        \begin{split}
            \int_{E_{\delta_{-}/2}}\varphi\frac{|\nabla (|\nabla u|^{p-1})|}{\left(|\nabla u|^{p-1}+\alpha\right)^{r}}\,dx&\leq (p-1)\int_{E_{\delta_{-}/2}}\varphi\frac{|\nabla u|^{\frac{p-2+ \beta}{2}+ \frac{p-2-\beta}{2}}|\nabla^2 u|}{|\nabla u|^{\frac{(p-1)r}{2}}\left(|\nabla u|^{p-1}+\alpha\right)^{r/2}}\,dx\\
            &= (p-1)\int_{E_{\delta_{-}/2}}\varphi\frac{|\nabla u|^{\frac{p-2-\beta}{2}}|\nabla^2 u|}{\left(|\nabla u|^{p-1}+\alpha\right)^{r/2}}\,dx\\
            &\leq \frac{\phi_0}{4}\int_{E_{\delta_{-}/2}}\frac{\varphi}{\left(|\nabla u|^{p-1}+\alpha\right)^r}\,dx + \frac{(p-1)^2}{\phi_0}\int_{E_{\delta_{-}/2}}\varphi|\nabla u|^{p-2-\beta}|\nabla^2 u|^2 \,dx.
        \end{split}
    \end{equation*}
    Note that, by construction,
 $$
 \delta_{-}\left(E_{\delta_{-}/2}\right) \geq \delta_{-}/2\qquad 
{\mbox{and}} 
\qquad \delta_+\left(E_{\delta_{-}/2}\right)\leq \delta_+.
 $$
Noting that the assumption~$r> \frac{p-2}{p-1}$ ensures that~$\beta>0$, and using the estimate in Lemma~\ref{First integral estimate}, we find that
    \begin{equation*}
    \begin{split}
         \int_{E_{\delta_{-}/2}}|\nabla u|^{p-2-\beta}|\nabla^2 u|^2 \,dx&\leq \frac{C_{10}}{(1-\beta)^2}\frac{\mathcal{V}(\delta_{+})^{\frac{\beta}{p}}}{\delta_{-}^2}\|\nabla u\|_{L^p(\Omega)}^{p-\beta} + \frac{C_{10}}{1-\beta}\int_{\Omega}|f'(u)||u_{x_i}|^{2-\beta}\,dx,
    \end{split}
    \end{equation*}
    for some positive constant~$C_{10}$ depending only on~$p$ and~$N$.
    
Accordingly, taking into account that~$\varphi\leq 1$, we see that
    \begin{equation*}
    \begin{split}
     &   \frac{3\phi_0}{4}\int_{E_{\delta_{-}/2}}\frac{\varphi}{\left(|\nabla u|^{p-1}+\a\right)^r}\,dx\\
     &\leq  C_{8}\frac{\mathcal{V}(\delta_+)^{\frac{1-r+rp}{p}}}{\delta_{-}}\|\nabla u\|_{L^p(\Omega)} + \frac{(p-1)^2C_{10}}{\phi_0 (1-\beta)^2}\frac{\mathcal{V}(\delta_{+})^{\beta/p}}{\delta_{-}^2}\|\nabla u\|_{L^p(\Omega)}^{p-\beta}
     + \frac{C_{10}(p-1)^2}{(1-\beta)\phi_0}\int_{\Omega}|f'(u)||\nabla u|^{2-\beta}\,dx.
    \end{split}
    \end{equation*}
    Taking the limit as~$\alpha \to 0^+$ and using again the fact that~$\varphi\leq 1$, we thereby conclude that 
    \begin{equation*}
        \begin{split}
       &     \int_E \frac{1}{|\nabla u|^{(p-1)r}}\,dx\\
       &\leq   C_{11}\frac{\mathcal{V}(\delta_+)^{\frac{1-r+rp}{p}}}{\delta_{-}}\|\nabla u\|_{L^p(\Omega)} + \frac{C_{11}}{ (1-\beta)^2}\frac{\mathcal{V}(\delta_{+})^{\beta/p}}{\delta_{-}^2}\|\nabla u\|_{L^p(\Omega)}^{p-\beta}
       + \frac{C_{11}}{(1-\beta)}\int_{\Omega}|f'(u)||\nabla u|^{2-\beta}\,dx,
        \end{split}
    \end{equation*}
    for some positive constant~$C_{11}$ depending only on~$p$, $N$ and~$\phi_0$. 
    
    In addition, by the definition of~$\beta$, we have that~$p-\beta = (p-1)(2-r)$, that~$p+\beta -2= (p-1)r$, and that~$2-\beta = p -(p-1)r$. 
    
    Therefore,
    \begin{equation*}
        \begin{split}
            \int_E \frac{1}{|\nabla u|^{(p-1)r}}\,dx&\leq C_{11}\frac{\mathcal{V}(\delta_+)^{\frac{1-r+rp}{p}}}{\delta_{-}}\|\nabla u\|_{L^p(\Omega)}   +\frac{C_{11}}{ (p-1)^2(1-r)^2}\frac{\mathcal{V}(\delta_{+})^{\frac{(p-1)r -p+2}{p}}}{\delta_{-}^2}\|\nabla u\|_{L^p(\Omega)}^{(p-1)(2-r)}\\
            &\qquad+ \frac{C_{11}}{(p-1)(1-r)}\int_{\Omega}|f'(u)||\nabla u|^{1+(p-1)(1-r)}\,dx\\
            &\leq C_{11}\frac{\mathcal{V}(\delta_+)^{\frac{1-r+rp}{p}}}{\delta_{-}}\|\nabla u\|_{L^p(\Omega)}   +\frac{C_{11}}{ (p-1)^2(1-r)^2}\frac{\mathcal{V}(\delta_{+})^{\frac{(p-1)r -p+2}{p}}}{\delta_{-}^2}\|\nabla u\|_{L^p(\Omega)}^{(p-1)(2-r)}\\
            &\qquad+ \frac{C_{11}}{(p-1)(1-r)}|\Omega|^{\frac{p-1}{p}r}\|f'\|_{L^{\infty}([0,M])}\|\nabla u\|_{L^p(\Omega)}^{p-(p-1)r},
        \end{split}
    \end{equation*}
    and the result follows with~$C_{9} := \max\left\{\frac{1}{(p-1)^2},1\right\} \, C_{11}$.
\end{proof}

\begin{remark} {\rm
    We point out that Lemmata~\ref{First integral estimate} and~\ref{Second integral estimate} could be expanded to encompass the cases of~$\beta<1$ and~$0<r<1$, but  we will not need these results in this additional generality
    for the purpose of this paper.}
\end{remark}

Now we have the tools to prove Theorem~\ref{Quantitative estimate of Mu}.

\begin{proof}[Proof of Theorem~\ref{Quantitative estimate of Mu}]
Let $$r: = \frac{p-2+|p-2|+2(p-1)}{4(p-1)}\in \left(\max\left\{\frac{p-2}{p-1},0\right\},1\right)$$
and
$$\theta :=(p-1)r = \frac{p-2+|p-2|+2p-2}{4}.$$ 
We start by assuming that~$\sigma \leq 1$. In this setting, we define
$$E:= \left\{x \in \Omega\;{\mbox{ s.t. }} \text{dist}(x,\partial \Omega)> \sigma^{\frac{\theta}{3}}\right\}.$$
Then, by construction, we see that~$\delta_{\pm}(E) = \sigma^{\frac{\theta}{3}}$.
Also, by Lemma~\ref{Second integral estimate},
\begin{equation*}
    \begin{split}
        &\left|\{x \in E: |\nabla u|\leq \sigma\}\right|\\
        \leq\,& \int_{E}\frac{\sigma^{(p-1)r}}{|\nabla u|^{(p-1)r}}\,dx\\
        \leq\,& C_{9}\frac{\mathcal{V}(\delta_+(E)^{\frac{1-r+rp}{p}}}{\delta_{-}(E)}\|\nabla u\|_{L^p(\Omega)}\sigma^{(p-1)r}
        +\frac{C_{9}}{(1-r)^2}\frac{\mathcal{V}(\delta_{+}(E))^{\frac{(p-1)r -p+2}{p}}}{\delta_{-}(E)^2}\|\nabla u\|_{L^p(\Omega)}^{(p-1)(2-r)} \sigma^{(p-1)r}\\
        &\qquad+ \frac{C_{9}}{1-r}\|f'\|_{L^{\infty}([0,M])}\|\nabla u\|_{L^p(\Omega)}^{p-(p-1)r} \sigma^{(p-1)r}\\
	\leq\,& \tilde{C}_{6}\sigma^{\frac{\theta}{3}},
    \end{split}
\end{equation*}
where~$\tilde{C}_{6}$ is a positive constant depending on~$N$, $p$, $|\Omega|$, $\mathcal{H}^{N-1}(\partial \Omega)$, $\phi_0$, $M$, and~$\|f'\|_{L^{\infty}([0,M])}$.
Note that to obtain the last inequality, we used the facts that~$\sigma\leq 1$,
$\mathcal{V}(\delta_{+}(E))\leq |\Omega|$ and
$$\|\nabla u\|_{L^p(\Omega)}\leq |\Omega|^{\frac{1}{p}}\|f\|_{L^{\infty}([0,M])}^{\frac{1}{p}}M^{\frac{1}{p}}.$$

Furthermore, by~\eqref{Size of the tube},
\begin{equation*}
    \left|\{x \in \Omega\setminus E\;{\mbox{ s.t. }}\; |\nabla u|\leq \sigma\}\right|\leq |\Omega\setminus E| = \mathcal{V}(\delta_{+}(E))\leq \left(1+\mathcal{M}_{0}^{-}\sigma^{\frac{\theta}{3}}\right)\mathcal{H}^{N-1}(\partial \Omega)\sigma^{\frac{\theta}{3}}\leq \left(1+\mathcal{M}_{0}^{-}\right)\sigma^{\frac{\theta}{3}}.
\end{equation*}
Hence,  there exists a positive constant~$C_{6}$, depending only on~$N$, $p$, $|\Omega|$, $\mathcal{H}^{N-1}(\partial \Omega)$, $\phi_0$, $M$, $\|f'\|_{L^{\infty}([0,M])}$, and~$\mathcal{M}_{0}^{-}$, such that
$$
M_{u}(\sigma)\leq C_{6}\sigma^{\frac{\theta}{3}}.
$$
This proves the desired result whenever~$\sigma \leq 1$. 

If instead~$\sigma \geq 1$, we clearly have that 
$$
M_u(\sigma)\leq |\Omega|\leq |\Omega|\sigma^{\frac{\theta}{3}}.
$$
The desired result then follows in this case by choosing~$C' := \max\{C_6, |\Omega|\}$.
\end{proof}

\section{\textbf{Proof of Theorem~\ref{Theorem 1.1}}}\label{sec7}
This section is devoted to the proof of Theorem~\ref{Theorem 1.1}.
Our strategy relies on applying some arguments present in Section~4 of~\cite{CianchiEspositoFuscoTrombetti+2008+153+189} in combination with the estimates obtained here in Section~\ref{Section 5}.

To this end, we present an upper bound on the size of the preimage of the set~$I$.

\begin{lemma}\label{Measure of preimage of I}
Let~$\Omega\subset \mathbb{R}^N$ be a bounded~$C^2$ domain and let~$f$ be a locally Lipschitz function such that~$f>\phi_0$ for some positive constant~$\phi_0$. 

Let~$u \in C^{1,\alpha}(\overline{\Omega})$ be a weak solution of~\eqref{Problem}. 

Assume that either~\ref{Assumption 1 on f} or~\ref{Assumption 2 on f} hold.

Then, for any~$\sigma >0$,
    \begin{equation*}
        \left|u^{-1}(I)\right|\leq M_u(\sigma)+ 2|\Omega|\|f\|_{L^{\infty}([0,M])}\frac{\eps^{1/2}}{\sigma^p},
    \end{equation*}
    where~$M_u$ is defined as in~\eqref{dist fct of nu}.
\end{lemma}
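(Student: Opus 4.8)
The plan is to decompose the set~$u^{-1}(I)$ according to whether~$|\nabla u|$ falls below or above the threshold~$\sigma$, and to treat the two pieces separately. First I would observe that, since~$I=I_1\cup I_2\subseteq(0,M)$ (recall~\eqref{I1bedde}, the definition of~$I_2$ in the proof of Lemma~\ref{lemma "2.4"}, and~$I:=I_1\cup I_2$ as in Lemma~\ref{lemma "2.6"}) and~$u\equiv 0$ in~$\mathbb{R}^N\setminus\Omega$, any~$x$ with~$u(x)\in I$ satisfies~$u(x)>0$ and hence lies in~$\Omega$; thus~$u^{-1}(I)\subseteq\Omega$. Writing
\begin{equation*}
u^{-1}(I)=\big(u^{-1}(I)\cap\{|\nabla u|\leq\sigma\}\big)\cup\big(u^{-1}(I)\cap\{|\nabla u|>\sigma\}\big),
\end{equation*}
the first piece is contained in~$\{|\nabla u|\leq\sigma\}\cap\Omega$, so by the very definition~\eqref{dist fct of nu} its measure is at most~$M_u(\sigma)$.

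For the second piece, I would apply the coarea formula~\eqref{Coarea formula 1} with the nonnegative measurable weight~$v:=|\nabla u|^{-1}\,\chi_{u^{-1}(I)}\,\chi_{\{|\nabla u|>\sigma\}}$ (understood to vanish where~$|\nabla u|\leq\sigma$). Since~$v\,|\nabla u|=\chi_{u^{-1}(I)}\chi_{\{|\nabla u|>\sigma\}}$ and~$\chi_{u^{-1}(I)}\equiv\chi_I(t)$ on the level set~$\{u=t\}$, this gives
\begin{equation*}
\big|u^{-1}(I)\cap\{|\nabla u|>\sigma\}\big|=\int_I\left(\int_{\{u=t\}\cap\{|\nabla u|>\sigma\}}\frac{d\mathcal{H}^{N-1}}{|\nabla u|}\right)dt.
\end{equation*}
On the region~$\{|\nabla u|>\sigma\}$ one has the elementary bound~$|\nabla u|^{-1}=|\nabla u|^{p-1}\,|\nabla u|^{-p}\leq\sigma^{-p}\,|\nabla u|^{p-1}$, so invoking the Gauss--Green identity~\eqref{I} together with~$I(t)=\int_{\{u>t\}}f(u)\,dx\leq|\Omega|\,\|f\|_{L^{\infty}([0,M])}$ yields, for a.e.~$t\in(0,M)$,
\begin{equation*}
\int_{\{u=t\}\cap\{|\nabla u|>\sigma\}}\frac{d\mathcal{H}^{N-1}}{|\nabla u|}\leq\frac{1}{\sigma^p}\int_{\{u=t\}}|\nabla u|^{p-1}\,d\mathcal{H}^{N-1}=\frac{I(t)}{\sigma^p}\leq\frac{|\Omega|\,\|f\|_{L^{\infty}([0,M])}}{\sigma^p}.
\end{equation*}

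Integrating this inequality over~$I$ and using that~$|I|\leq|I_1|+|I_2|\leq 2\eps^{1/2}$ — which is precisely what Lemma~\ref{lemma "2.1"} gives for~$I_1$ and what~\eqref{Estimate 3.10} in Lemma~\ref{lemma "2.4"} gives for~$I_2$ — I would obtain
\begin{equation*}
\big|u^{-1}(I)\cap\{|\nabla u|>\sigma\}\big|\leq\frac{2\,|\Omega|\,\|f\|_{L^{\infty}([0,M])}\,\eps^{1/2}}{\sigma^p},
\end{equation*}
and adding the two contributions produces the claimed bound. I do not expect any substantial obstacle: the argument is essentially bookkeeping with the coarea formula, and the only point deserving a moment of care is its applicability to the (a priori merely~$W^{1,1}_{\mathrm{loc}}$) function~$u$ with the chosen weight~$v$, which is legitimate since~$v$ is nonnegative and measurable and no further regularity of~$u$ is needed. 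In this way all the quantitative information about the critical set is deferred to the control of~$M_u(\sigma)$ supplied by Theorem~\ref{Quantitative estimate of Mu}, which is what will make the estimate genuinely useful in the proof of Theorem~\ref{Theorem 1.1}.
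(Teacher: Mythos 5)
Your proposal is correct and follows essentially the same route as the paper: the identical decomposition of $u^{-1}(I)$ by the threshold $\sigma$, the coarea formula on the part where $|\nabla u|>\sigma$, the Gauss--Green identity \eqref{I} to bound the level-set integral by $|\Omega|\,\|f\|_{L^\infty([0,M])}\sigma^{-p}$, and $|I|\leq 2\eps^{1/2}$. The only cosmetic difference is that you perform the pointwise bound $|\nabla u|^{-1}\leq\sigma^{-p}|\nabla u|^{p-1}$ in one step, whereas the paper splits it into $|\nabla u|^{-1}\leq\sigma^{-1}$ followed by a Chebyshev-type bound on $\mathcal{H}^{N-1}(\{u=t\}\cap\{|\nabla u|>\sigma\})$.
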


\begin{proof}
    By the coarea formula,  
    \begin{equation*}
    \begin{split}
        \left|u^{-1}(I)\right|&= \left|u^{-1}(I)\cap \{|\nabla u|\leq \sigma\}\right|+ \left|u^{-1}(I)\cap \{|\nabla u|>\sigma\}\right|\\
		&\leq M_{u}(\sigma)+ \int_{I}\int_{\{u = t\}\cap \{|\nabla u| > \sigma\}}\frac{d\mathcal{H}^{N-1}}{|\nabla u|}\,dt\\
              &\leq M_{u}(\sigma) + \int_I \sigma^{-1}\mathcal{H}^{N-1}\left(\{u=t\} \cap \{|\nabla u|>\sigma\}\right)\,dt.
    \end{split}
    \end{equation*}
    To estimate the last term, we use~\eqref{I}, finding that
\begin{equation*}
    \begin{split}
        \mathcal{H}^{N-1}\left(\{u=t\}\cap \{|\nabla u|>\sigma\}\right)&\leq\sigma^{1-p}\int_{\{u=t\}}|\nabla u|^{p-1}\,d\mathcal{H}^{N-1}\\
        &= \sigma^{1-p}\int_{\{u>t\}}f(u)\,dx \\
        &\leq |\Omega| \|f\|_{L^{\infty}([0,M])} \sigma^{1-p}.
    \end{split}
\end{equation*}
Thus, 
\begin{equation*}
    \left|u^{-1}(I)\right|\leq M_u(\sigma) + 2\|f\|_{L^{\infty}(([0,M])}|\Omega|\frac{\eps^{1/2}}{\sigma^p} 
,\end{equation*}
as desired.\end{proof}

\begin{proof}[Proof of Theorem~\ref{Theorem 1.1}]
As in ~\cite{CianchiEspositoFuscoTrombetti+2008+153+189}, we define~$G:= (0,t_{\varepsilon,\alpha})\setminus I$ and 
\begin{equation*}
    \Phi(x): = \left(\frac{\mu(u(x))}{\omega_N}\right)^{1/N}.
\end{equation*} It is also convenient to  take into account the function
\begin{equation*}
    \tilde{\mu}(t) := \int_{t}^{M} \chi_G(\tau) \int_{\{u = \tau\}}\frac{d\mathcal{H}^{N-1}}{|\nabla u|}\,d\tau,
\end{equation*}which takes into account only the ``good" level sets of~$u$.

Also, given~$\sigma >0$, we set
\begin{equation}\label{eta}
    \eta := \frac{M_u(\sigma) + 2|\Omega| \|f\|_{L^{\infty}[0,M]} \frac{\eps^{1/2}}{\sigma^{p}}+ \eps^{\alpha N'}}{|\Omega|}
\end{equation}
and 
\begin{equation}\label{t eta}
    t_{\eta}:= \inf\left\{t\geq 0:\, \tilde{\mu}(t)\leq (1-\eta^{1/2})\mu(t)\right\}.
\end{equation}

Then, using the results of Section~\ref{Estimates on level sets} (which are the counterparts of the ones in Section~2 of~\cite{CianchiEspositoFuscoTrombetti+2008+153+189}) and Lemma~\ref{Measure of preimage of I} here, one is able to follow the arguments present in Section~4 of~\cite{CianchiEspositoFuscoTrombetti+2008+153+189} to conclude that, if~$\eta\leq \frac{1}{4}$, there exist positive constants~$\alpha_1(N,p)$, $\alpha_2(N,p)$,
$$
\tilde{C}(N,p,|\Omega|,\mathcal{H}^{N-1}(\partial \Omega),\phi_0,M, \|f\|_{L^{\infty}([0,M])}) , \quad \text{and} \quad
\eps_1(N,p,|\Omega|, \mathcal{H}^{N-1}(\partial \Omega), \phi_0, \|f\|_{L^{\infty}([0,M])}, \mathcal{M}_{0}^{-}) \le 1,
$$
such that, if~$\eps \le \eps_1,$
then 
$$
\min_{x_0\in \mathbb{R}^N}\int_{\mathbb{R}^N} |u(x)-u^*(x_0+x)|\,dx\leq \tilde{C}\left(M_{u}(\sigma)^{\alpha_1}+\eps^{\alpha_2}+ \left(\frac{\eps^{\frac{\alpha_1}{2}}}{\sigma^{p}}\right)^{\alpha_1}+\frac{\eps^{\frac{1}{2}}}{\sigma^p} \right).
$$

As a result, choosing~$\sigma := \eps^{\frac{1}{4p}}$ and using Theorem~\ref{Quantitative estimate of Mu}, we see that
$$
\min _{x_0\in \mathbb{R}^N}\int_{\mathbb{R}^N} |u(x)-u^*(x_0+x)|\,dx\leq C \eps^{\theta}, 
$$
for some positive constants~$C(N,p,|\Omega|,\mathcal{H}^{N-1}(\partial \Omega),\phi_0,M, \|f\|_{W^{1,\infty}([0,M])}, \mathcal{M}_{0}^{-})$ and~$\theta(N,p)$, provided that~$\eps \le \eps_2$ for some positive constant~$\eps_2$ only depending on~$N$ , $p$, $|\Omega|$, $\mathcal{H}^{N-1}(\partial \Omega)$, $\phi_0$ , $M$, $\|f\|_{W^{1,\infty}([0,M])}$, and~$\mathcal{M}_{0}^{-}$. 

This proves~\eqref{Quantitative stability} for~$\eps \le \eps_2$. Moreover, if~$\eps > \eps_2$, then~\eqref{Quantitative stability} trivially holds true.
\end{proof}

\section*{Acknowledgements} 

Serena Dipierro, Giorgio Poggesi, and Enrico Valdinoci are members of the Australian Mathematical Society (AustMS).
Serena Dipierro is supported by the Australian Research Council
Future Fellowship FT230100333
``New perspectives on nonlocal equations''.
Jo{\~a}o Gon\c{c}alves da Silva and
Giorgio Poggesi are supported by the Australian Research Council (ARC) Discovery Early Career Researcher Award (DECRA) DE230100954 ``Partial Differential Equations: geometric aspects and applications''. Jo{\~a}o Gon\c{c}alves da Silva is supported by a Scholarship for International Research Fees at The University of Western Australia.
Enrico Valdinoci is supported by the Australian Laureate Fellowship FL190100081 ``Minimal surfaces, free boundaries and partial differential equations''.

\bibliographystyle{abbrv}
\begin{bibdiv}
\begin{biblist}

\bib{MR0102114}{article}{
   author={Aleksandrov, A. D.},
   title={Uniqueness theorems for surfaces in the large. V},
   language={Russian, with English summary},
   journal={Vestnik Leningrad. Univ.},
   volume={13},
   date={1958},
   number={19},
   pages={5--8},
   review={\MR{0102114}},
}

\bib{MR3250365}{article}{
 author={Barchiesi, M.},
   author={Capriani, G. M.},
   author={Fusco, N.},
   author={Pisante, G.},
   title={Stability of P\'{o}lya-Szeg\H{o} inequality for log-concave functions},
   journal={J. Funct. Anal.},
   volume={267},
   date={2014},
   number={7},
   pages={2264--2297},
   issn={0022-1236},
   review={\MR{3250365}},
   doi={10.1016/j.jfa.2014.03.015},
}
\bib{MR1159383}{article}{
   author={Berestycki, H.},
   author={Nirenberg, L.},
   title={On the method of moving planes and the sliding method},
   journal={Bol. Soc. Brasil. Mat. (N.S.)},
   volume={22},
   date={1991},
   number={1},
   pages={1--37},
   issn={0100-3569},
   review={\MR{1159383}},
   doi={10.1007/BF01244896},
}
\bib{MR1628044}{article}{
   author={Brock, Friedemann},
   title={Radial symmetry for nonnegative solutions of semilinear elliptic
   equations involving the $p$-Laplacian},
   conference={
      title={Progress in partial differential equations, Vol. 1},
      address={Pont-\`a-Mousson},
      date={1997},
   },
   book={
      series={Pitman Res. Notes Math. Ser.},
      volume={383},
      publisher={Longman, Harlow},
   },
   isbn={0-582-31708-8},
   date={1998},
   pages={46--57},
   review={\MR{1628044}},
}

\bib{CianchiEspositoFuscoTrombetti+2008+153+189}{article}{
   author={Cianchi, Andrea},
   author={Esposito, Luca},
   author={Fusco, Nicola},
   author={Trombetti, Cristina},
   title={A quantitative P\'{o}lya-Szeg\"{o} principle},
   journal={J. Reine Angew. Math.},
   volume={614},
   date={2008},
   pages={153--189},
   issn={0075-4102},
   review={\MR{2376285}},
   doi={10.1515/CRELLE.2008.005},
}

\bib{Cianchi2002FunctionsOB}{article}{   
   author={Cianchi, Andrea},
   author={Fusco, Nicola},
   title={Functions of bounded variation and rearrangements},
   journal={Arch. Ration. Mech. Anal.},
   volume={165},
   date={2002},
   number={1},
   pages={1--40},
   issn={0003-9527},
   review={\MR{1947097}},
   doi={10.1007/s00205-002-0214-9},
}

\bib{MR4779387}{article}{
   author={Ciraolo, Giulio},
   author={Cozzi, Matteo},
   author={Perugini, Matteo},
   author={Pollastro, Luigi},
   title={A quantitative version of the Gidas-Ni-Nirenberg Theorem},
   journal={J. Funct. Anal.},
   volume={287},
   date={2024},
   number={9},
   pages={110585},
   issn={0022-1236},
   review={\MR{4779387}},
   doi={10.1016/j.jfa.2024.110585},
}

\bib{CiraoloLi}{misc}{
      title={A quantitative symmetry result for $p$-Laplace equations with discontinuous nonlinearities}, 
      author={Ciraolo, Giulio}, author={Li, Xiaoliang},
      year={forthcoming},
}

\bib{MR1648566}{article}{
   author={Damascelli, Lucio},
   author={Pacella, Filomena},
   title={Monotonicity and symmetry of solutions of $p$-Laplace equations,
   $1<p<2$, via the moving plane method},
   journal={Ann. Scuola Norm. Sup. Pisa Cl. Sci. (4)},
   volume={26},
   date={1998},
   number={4},
   pages={689--707},
   issn={0391-173X},
   review={\MR{1648566}},
}

\bib{MR2096703}{article}{
author={Damascelli, Lucio},
   author={Sciunzi, Berardino},
   title={Regularity, monotonicity and symmetry of positive solutions of
   $m$-Laplace equations},
   journal={J. Differential Equations},
   volume={206},
   date={2004},
   number={2},
   pages={483--515},
   issn={0022-0396},
   review={\MR{2096703}},
   doi={10.1016/j.jde.2004.05.012},
}
\bib{MR1190345}{article}{
   author={Dancer, E. N.},
   title={Some notes on the method of moving planes},
   journal={Bull. Austral. Math. Soc.},
   volume={46},
   date={1992},
   number={3},
   pages={425--434},
   issn={0004-9727},
   review={\MR{1190345}},
   doi={10.1017/S0004972700012089},
}
\bib{MR0709038}{article}{
   author={DiBenedetto, E.},
   title={$C\sp{1+\alpha }$ local regularity of weak solutions of degenerate
   elliptic equations},
   journal={Nonlinear Anal.},
   volume={7},
   date={1983},
   number={8},
   pages={827--850},
   issn={0362-546X},
   review={\MR{0709038}},
   doi={10.1016/0362-546X(83)90061-5},
}
\bib{DPV_CVPDE}{article}{
   author={Dipierro, Serena},
   author={Poggesi, Giorgio},
   author={Valdinoci, Enrico},
   title={Radial symmetry of solutions to anisotropic and weighted diffusion
   equations with discontinuous nonlinearities},
   journal={Calc. Var. Partial Differential Equations},
   volume={61},
   date={2022},
   number={2},
   pages={Paper No. 72, 31},
   issn={0944-2669},
   review={\MR{4380032}},
   doi={10.1007/s00526-021-02157-5},
}

\bib{MR0257325}{book}{
    AUTHOR = {Federer, Herbert},
     TITLE = {Geometric measure theory},
    SERIES = {Die Grundlehren der mathematischen Wissenschaften},
    VOLUME = {Band 153},
 PUBLISHER = {Springer-Verlag New York, Inc., New York},
      YEAR = {1969},
     PAGES = {xiv+676},
   MRCLASS = {28.80 (26.00)},
  MRNUMBER = {257325},
MRREVIEWER = {J.\ E.\ Brothers},
}

\bib{MR2672283}{article}{
   author={Figalli, A.},
   author={Maggi, F.},
   author={Pratelli, A.},
   title={A mass transportation approach to quantitative isoperimetric
   inequalities},
   journal={Invent. Math.},
   volume={182},
   date={2010},
   number={1},
   pages={167--211},
   issn={0020-9910},
   review={\MR{2672283}},
   doi={10.1007/s00222-010-0261-z},
}

\bib{Fusco_2015}{article}{
title={The quantitative isoperimetric inequality and related topics},
 volume={5}, DOI={10.1007/s13373-015-0074-x}, 
number={3}, journal={Bulletin of Mathematical Sciences},
 author={Fusco, Nicola},
 year={2015},
 pages={517–607},
} 

\bib{MR2456887}{article}{
author={Fusco, N.},
   author={Maggi, F.},
   author={Pratelli, A.},
   title={The sharp quantitative isoperimetric inequality},
   journal={Ann. of Math. (2)},
   volume={168},
   date={2008},
   number={3},
   pages={941--980},
   issn={0003-486X},
   review={\MR{2456887}},
   doi={10.4007/annals.2008.168.941},
}

\bib{MR0544879}{article}{
   author={Gidas, B.},
   author={Ni, Wei Ming},
   author={Nirenberg, L.},
   title={Symmetry and related properties via the maximum principle},
   journal={Comm. Math. Phys.},
   volume={68},
   date={1979},
   number={3},
   pages={209--243},
   issn={0010-3616},
   review={\MR{0544879}},
}
\bib{MR0737190}{book}{
   author={Gilbarg, David},
   author={Trudinger, Neil S.},
   title={Elliptic partial differential equations of second order},
   series={Grundlehren der mathematischen Wissenschaften [Fundamental
   Principles of Mathematical Sciences]},
   volume={224},
   edition={2},
   publisher={Springer-Verlag, Berlin},
   date={1983},
   pages={xiii+513},
   isbn={3-540-13025-X},
   review={\MR{0737190}},
   doi={10.1007/978-3-642-61798-0},
}
\bib{KesavanPacella}{article}{
   author={Kesavan, S.},
   author={Pacella, Filomena},
   title={Symmetry of positive solutions of a quasilinear elliptic equation
   via isoperimetric inequalities},
   journal={Appl. Anal.},
   volume={54},
   date={1994},
   number={1-2},
   pages={27--37},
   issn={0003-6811},
   review={\MR{1382205}},
   doi={10.1080/00036819408840266},
}

\bib{MR0653200}{article}{
   author={Lions, P.-L.},
   title={Two geometrical properties of solutions of semilinear problems},
   journal={Applicable Anal.},
   volume={12},
   date={1981},
   number={4},
   pages={267--272},
   issn={0003-6811},
   review={\MR{0653200}},
   doi={10.1080/00036818108839367},
}

\bib{MR4476237}{article}{
   author={Magnanini, Rolando},
   author={Poggesi, Giorgio},
   title={The location of hot spots and other extremal points},
   journal={Math. Ann.},
   volume={384},
   date={2022},
   number={1-2},
   pages={511--549},
   issn={0025-5831},
   review={\MR{4476237}},
   doi={10.1007/s00208-021-02290-8},
}

\bib{poggesi2024bubbling}{misc}{
      title={Bubbling and quantitative stability for Alexandrov's Soap Bubble Theorem with $L^1$-type deviations}, 
      author={Poggesi, Giorgio},
      year={2024},
      eprint={2405.06376},
      archivePrefix={arXiv},
      primaryClass={math.AP},
}

\bib{A-general-variational-identity}{article}{
   author={Pucci, Patrizia},
   author={Serrin, James},
   title={A general variational identity},
   journal={Indiana Univ. Math. J.},
   volume={35},
   date={1986},
   number={3},
   pages={681--703},
   issn={0022-2518},
   review={\MR{855181}},
   doi={10.1512/iumj.1986.35.35036},
}

\bib{MR1300801}{article}{
   author={Rosset, Edi},
   title={An approximate Gidas-Ni-Nirenberg theorem},
   journal={Math. Methods Appl. Sci.},
   volume={17},
   date={1994},
   number={13},
   pages={1045--1052},
   issn={0170-4214},
   review={\MR{1300801}},
   doi={10.1002/mma.1670171304},
}

\bib{serra2013radial}{article}{
  title={Radial symmetry of solutions to diffusion equations with discontinuous nonlinearities},
  author={Serra, Joaquim},
  journal={Journal of Differential Equations},
  volume={254},
  number={4},
  pages={1893--1902},
  year={2013},
  publisher={Elsevier},
}

\end{biblist}
\end{bibdiv}
\end{document}